\newtheorem{theorem}{Theorem}[section]
\newtheorem{lemma}[theorem]{Lemma}
\newtheorem{proposition}[theorem]{Proposition}
\theoremstyle{definition}
\newtheorem{definition}[theorem]{Definition}
\newtheorem{framework}[theorem]{Framework}
\newtheorem{remark}[theorem]{Remark}
\newtheorem{example}[theorem]{Example}
\theoremstyle{remark}
\newcommand{\Si}{\mathfrak{S}}
\newcommand{\Ext}{{\mathrm{Ext}}}
\newcommand{\uE}{{\underline{\mathrm{Ext}}}}
\newcommand{\Tor}{{\mathrm{Tor}}}
\newcommand{\DD}{{\mathbf{D}}}
\renewcommand{\hom}{{\mathrm{Hom}}}
\newcommand{\End}{{\mathrm{End}}}
\newcommand{\Id}{{\mathrm{Id}}}
\renewcommand{\P}{\mathcal{P}}
\newcommand{\F}{\mathcal{F}}
\newcommand{\FF}{\mathcal{F}}
\newcommand{\C}{\mathcal{C}}
\newcommand{\M}{\tau}
\newcommand{\Z}{\mathbb{Z}}
\newcommand{\Fp}{\mathbb{F}_p}
\newcommand{\A}{\mathcal{A}}
\newcommand{\B}{\mathcal{B}}
\newcommand{\Cr}{{\mathrm{cr}}}
\newcommand{\Func}{{\mathrm{Func}}}
\newcommand{\V}{\mathrm{P}}
\newcommand{\ev}{\mathrm{ev}}
\newcommand{\DIGp}{\Sigma_p}
\newcommand{\INT}{\mathcal{I}}
\newcommand{\Comp}{\mathrm{Comp}}
\newcommand{\Fu}{\mathcal{F}}
\newcommand{\simpl}{\mathrm{simpl}}
\title[A functorial control of integral torsion in homology]{A functorial control of integral torsion in homology}
\author[Antoine Touz\'e]{Antoine Touz\'e} 
\date{\today}
\begin{document}

\sloppy

\begin{abstract}

We study the integral torsion of the values of strict
polynomial functors defined over the integers. We interpret some classical
homological invariants as values of
strict polynomial functors and therefore obtain estimates of the
integral torsion of these invariants, including cancellation results. 
\end{abstract}

\maketitle

\section{Introduction}

The aim of this article is to provide an explanation for some phenomena regarding the integral torsion of some homological invariants. 

An emblematic example is the stable homology of the Eilenberg-Mac Lane spaces $K(\pi,n)$ associated to an abelian group $\pi$. Cartan computed \cite[Expos\'e 11]{Cartan} the (unstable) integral homology of these spaces, and deduced from it their stable homology (the $i$-th stable homology group $\mathrm{H}\Z_i(H\pi)$ equals the unstable homology group $\mathrm{H}_{i+n}(K(\pi,n),\Z)$ if $i<n$). He observed \emph{a posteriori} the striking fact that the torsion part of the graded abelian group $\mathrm{H}\Z_*(\mathrm{H}\pi)$ contains only elements of order $p$ for prime integers $p$, whereas the unstable homology groups contain complicated torsion. For example, the graded abelian group $\mathrm{H}_{*}(K(\Z,4),\Z)$ contains elements of order $p^r$ for all $r\ge 0$ and for all prime integers $p$. 

Dold and Puppe gave \cite[Section 10]{DP2} a conceptual proof for this phenomenon, which does not require to compute $\mathrm{H}\Z_*(\mathrm{H}\pi)$. Such a result is useful because if we know \emph{a priori} that the stable homology has only prime torsion then we can recover it (by the universal coefficient theorem) from the stable homology with prime field coefficients $\mathrm{H}{\Fp}_*(\mathrm{H}\pi)$, whose computation is significantly easier (see e.g. \cite{Betley}). The argument of Dold and Puppe has been adapted to other contexts later, for example in \cite{FP}, but its range of application is quite limited. 

This article yields a method to obtain information on integral torsion, but with a wider range of application than Dold and Puppe's argument. The method decomposes into two steps.
\begin{enumerate}
\item In theorem \ref{thm-princ}, we describe the integral torsion 
of the values of strict polynomial functors in the sense of 
Friedlander and Suslin \cite{FS,SFB}.
\item  Then 
we give a (non exhaustive) list of homological invariants which
be interpreted as a value of a strict polynomial functor. In particular we obtain
information regarding their integral torsion.
\end{enumerate}

Let us describe the first step. Given a commutative ring $R$, we denote by 
$\V_R$ the category of finitely generated projective $R$-modules.
Strict polynomial functors can be thought of as functors $F:\V_R\to
R\text{-Mod}$, equipped with an additional 'strict polynomial structure', cf. section \ref{sec-2}.
There are two nonnegative integers attached to strict polynomial functors, 
which are usually easy to determine.
\begin{itemize}
\item The weight\footnote{What we call the `weight' of a strict polynomial
functor is called `degree' in \cite{FS,SFB}. However, we shall not follow the
terminology of \cite{FS,SFB} to prevent the confusion with the notion of degree
in the sense of Eilenberg and Mac Lane \cite{EML2}.} of the functor $F$, which is determined
by its strict polynomial structure. Homogeneous strict polynomial functors
of weight $s$ form an abelian category $\P_{s,R}$.
\item The degree $\deg(F)$ in the sense of Eilenberg and Mac Lane
\cite{EML2}. This notion of degree is defined for all functors $F:\V_R\to
R\text{-Mod}$, i.e. independently of the strict polynomial structure. 
\end{itemize}
For example, the $d$-th tensor $\otimes^d:M\mapsto M^{\otimes d}$ is canonically a strict polynomial functor of weight $d$, and its degree is also $d$. 
In general, the degree of a strict polynomial functor is less or equal to its
weight, but the inequality can be strict.
The difference between the weight of $F$ and its degree gives information regarding the integral
torsion of the values of $F$. For example,
we prove in section \ref{sec-3}:

\begin{proposition}\label{prop-amusette}
Let $R$ be a commutative ring, let $s$ be a positive integer, and let
$F\in\P_{s,R}$. If $\deg(F)<s$, then the values of $F$, considered as abelian groups, are torsion
groups. 
\end{proposition}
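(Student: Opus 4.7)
The plan is to exploit the fact that the weight $s$ and the Eilenberg--Mac Lane degree $d:=\deg(F)$ impose incompatible constraints on the scalar endomorphism $F(n\cdot\Id_M)$ as a function of $n\in\Z$ when $d<s$.

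First I would recall from the strict polynomial formalism of section~\ref{sec-2} that a homogeneous strict polynomial functor of weight $s$ satisfies $F(\lambda\cdot\Id_M)=\lambda^s\cdot\Id_{F(M)}$ for every $\lambda\in R$; sending an integer $n$ to its image in $R$ yields the identity $F(n\cdot\Id_M)=n^s\cdot\Id_{F(M)}$ in $\End(F(M))$. Next I would use the Eilenberg--Mac Lane characterisation of polynomial degree: $\deg(F)\le d$ is equivalent to the vanishing of the $(d+1)$-th cross effect, which in turn amounts to the identity
\[\sum_{S\subseteq\{1,\ldots,d+1\}}(-1)^{d+1-|S|}\,F\Bigl(\sum_{i\in S}f_i\Bigr)=0\]
for every $(d+1)$-tuple of endomorphisms $f_i$ of $M$. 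Specialising $f_1=\cdots=f_{d+1}=\Id_M$ and substituting the formula from the previous sentence produces
\[\Bigl(\sum_{k=0}^{d+1}(-1)^{d+1-k}\binom{d+1}{k}k^s\Bigr)\cdot\Id_{F(M)}=0.\]
The integer coefficient on the left is the classical Stirling expression $(d+1)!\,S(s,d+1)$, where $S(s,d+1)$ is the Stirling number of the second kind. The hypothesis $d<s$ gives $1\le d+1\le s$, so $S(s,d+1)>0$; hence $N:=(d+1)!\,S(s,d+1)$ is a positive integer annihilating $F(M)$, and $F(M)$ is a torsion abelian group.

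The only input beyond bookkeeping is the alternating-sum characterisation of polynomial degree, which is standard, so I expect no genuine obstacle. Conceptually, the whole argument reduces to the observation that $n\mapsto n^s$ is a polynomial of exact degree $s$ while $n\mapsto F(n\cdot\Id_M)$ is forced to be of degree at most $d<s$, and as a pleasant byproduct the proof delivers an explicit integer annihilator for every value of $F$.
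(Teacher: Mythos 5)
Your proof is correct, but it takes a genuinely different route from the paper's. The paper proves the statement (together with the sharper torsion bound of Theorem \ref{thm-princ}(2)) by a Yoneda argument: every $x\in F(R)$ lies in the image of a map $\Gamma^s\to F$, which by the degree hypothesis and Lemma \ref{lm-vanish} factors through the cokernel $C_{s,d+1}$ of the multiplication maps $\bigoplus\Gamma^{\lambda}\to\Gamma^s$ over compositions $\lambda$ of $s$ into $d+1$ parts; the multinomial identity of Lemma \ref{lm-prop-Gamma} then shows $C_{s,d+1}$ is killed by $\M(s,d+1)=\gcd\binom{s}{\lambda}$, and parameterization $F_M$ transports the conclusion from $F(R)$ to all $F(M)$. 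You instead play the homogeneity relation $F(n\cdot\Id_M)=n^s\,\Id_{F(M)}$ against the Eilenberg--Mac Lane deviation identity coming from $\Cr_{d+1}F=0$ (only the easy implication, that the deviation factors through the cross effect, is needed), obtaining the explicit annihilator $N=(d+1)!\,S(s,d+1)=\sum_{\lambda\in\Comp(s,d+1)}\binom{s}{\lambda}$, which is positive precisely because $d+1\le s$. Your argument is more elementary -- no divided power algebra, no Yoneda, no parameterization -- and it annihilates every $F(M)$ directly. The trade-off is quantitative: your annihilator is the \emph{sum} of the multinomials rather than their \emph{gcd}, so it carries spurious prime factors (e.g.\ $s=4$, $d=1$ gives $N=14$ versus $\M(4,2)=2$) and would not suffice to deduce the sharp $p$-primary bound $p^{\lceil(d+1-\DIGp(s))/(p-1)\rceil}$ of Theorem \ref{thm-princ}(2), for which the paper's $C_{s,d+1}$ machinery is still required. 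For Proposition \ref{prop-amusette} as stated, your shortcut is complete and valid.
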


We will therefore restrict our attention to functors with torsion values. For such functors we have a decomposition in $\P_{s,R}$:
\begin{align*} F \simeq \bigoplus_{p\text{ prime }} {_{(p)}F}\;,
\end{align*}
where $_{(p)} F$ denotes the $p$-primary part of $F$, that is for all $M\in\V_R$
$$_{(p)}F(M)=\{x\in F(M)\;|\;\exists r\in\mathbb{N}\; p^rx=0\}.$$ 
Since $\deg ({_{(p)}F})\le \deg(F)$,
we can restrict our attention further to functors with values in $p$-primary abelian groups, which we call \emph{$p$-primary functors}.
Given a positive integer $s$, we denote by $\DIGp(s)$ the sum of the digits in the $p$-adic expansion of $s$, and by $\INT(p,s)$ the `interval' of integers 
\begin{align*}\INT(p,s)
&=\{n\in \mathbb{N}\;|\; n=s\mod (p-1)\;,\text{ and } \DIGp(s)\le n\le s\}\;,
\end{align*}
which can be pictured as follows (note that $\DIGp(s)$ is the smallest element of $\INT(p,s)$):
$$\xymatrix@R=1pt@W=10pt@!C=10pt{
&{\DIGp(s)} &&&&&&&s&\\
\ar@{-}[rrrr]&\bullet\ar@/_/@{-}[r]_{p-1} &\bullet\ar@/_/@{-}[r]_{p-1}&\bullet&\ar@{--}[rr]&&\ar[rrr]^>>{\text{\normalsize $\Z$}}&\bullet\ar@/_/@{-}[r]_{p-1}&\bullet&.
}$$
We say that the torsion of strict polynomial functor $F$ is bounded by an integer $n$ if for all finitely generated projective $R$-modules 
$M$ and all $x\in F(M)$ we have $nx=0$ (in particular, the torsion of $F$ is bounded by $1$ if and only if $F=0$).
In section \ref{sec-3}, we prove the following description of $p$-primary strict polynomial functors.

\begin{theorem}\label{thm-princ}
Let $R$ be a commutative ring, let $p$ be a prime integer and let $F\in\P_{s,R}$ be a \emph{nonzero} $p$-primary strict polynomial functor. Then
\begin{enumerate}
\item The degree of $F$ is an element of $\INT(p,s)$.
\item If $\deg(F)<s$, the integral torsion of $F$ is bounded by $p^r$, with $r=\big\lceil
\frac{\deg(F)+1-\DIGp(s)}{p-1}\big\rceil$, where the brackets denote the ceiling function\footnote{That is, for all $x\in\mathbb{R}$, $\lceil x\rceil=\min \{n\in\Z\;|\;x\le n\}$.}.
\end{enumerate}
\end{theorem}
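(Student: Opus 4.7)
The plan is to reduce to a structural statement about simple objects of $\P_{s,\Fp}$, which I would then verify. The reductions go as follows. First, base change reduces to $R = \Z$: strict polynomial functors over $R$ arise by base change from those over $\Z$, preserving weights, degrees, and the integral torsion structure of values considered as abelian groups. Second, for a nonzero $p$-primary $F \in \P_{s,\Z}$, the exact sequence $0 \to F[p] \to F \to F/F[p] \to 0$ enables induction on the $p$-exponent; the base case is $pF = 0$, in which $F(f + pg) = F(f)$ for all morphisms $f, g$, since the strict polynomial structure of weight $s$ forces $F(f + pg) - F(f)$ to be divisible by $p$, which vanishes in the $\Fp$-valued output. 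Thus $F$ factors through the base change $\V_\Z \to \V_{\Fp}$ and corresponds to an object of $\P_{s, \Fp}$.

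For part (1), the key claim is that every simple $L \in \P_{s,\Fp}$ satisfies $\deg L \in \INT(p, s)$. Granting this, the degree of any $F \in \P_{s, \Fp}$ equals the maximum of $\deg L$ over its composition factors (cross-effects being exact), hence lies in $\INT(p, s)$; the exponent induction then extends this to general $p$-primary $F$ over $\Z$. The claim about simples uses a Steinberg-type decomposition: a simple polynomial representation of $GL_n$ in characteristic $p$ with highest weight $\lambda$ decomposes (with appropriate care when $\lambda$ is not $p$-restricted) as a tensor product of Frobenius twists of simples of smaller weight. Since Frobenius twists preserve degree while multiplying weight by $p$, tensor products add both degrees and weights, and $p^i \equiv 1 \pmod{p-1}$, summing the contributions yields both $\deg L \equiv s \pmod{p-1}$ and $\deg L \ge \DIGp(s)$, the latter by subadditivity of digit sums.

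For part (2), the torsion bound requires finer control. The strategy is to track how $p$-torsion accumulates through the exponent induction: each induction step from $F$ to the pair $(F[p], F/F[p])$ introduces at most one factor of $p$ in the torsion bound, so a bound of $p^r$ corresponds to $r$ such steps. Aligning $r$ with the formula $\lceil (\deg(F) + 1 - \DIGp(s))/(p-1) \rceil$ requires correlating the depth of the exponent induction with the ``Frobenius depth'' of the composition factors at each degree. The main obstacle is precisely this alignment: while the $p$-exponent induction and the Steinberg-type analysis of simples are individually tractable, combining them to establish the exact bound $p^r$ requires careful synthesis, likely via explicit projective resolutions of $F$ by divided power functors $\Gamma^\mu$ or direct $\Ext$ computations in $\P_{s, \Z}$, showing that each extension of $p$-primary layers increases the torsion exponent by exactly the amount needed to match the arithmetic of $\INT(p,s)$.
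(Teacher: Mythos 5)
Your approach diverges substantially from the paper's, and both parts have genuine gaps.

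For part (1): your opening reduction is not valid as stated. Objects of $\P_{s,R}$ are arbitrary $R$-linear functors on the Schur category and do \emph{not} all arise by base change from $\Z$; and even after your (correct) observation that $pF=0$ forces $F$ to factor through $\Gamma^s\V_{R/pR}$, you land in $\P_{s,R/pR}$ for an \emph{arbitrary} $\Fp$-algebra $R/pR$, where the highest-weight and Steinberg machinery you invoke is unavailable. Moreover, even over $\Fp$ your argument delegates the real content to the unproved assertion that a $p$-restricted simple $L_\mu$ has $\deg L_\mu\equiv|\mu|\pmod{p-1}$ and $\deg L_\mu\ge\DIGp(|\mu|)$ (what one needs is essentially that the weight space $(1,\dots,1)$ of $L_\mu$ is nonzero, a nontrivial fact of modular representation theory), and it also needs $F$ to have a composition series, which fails for general objects of $\P_{s,\Fp}$ with infinite-dimensional values. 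The paper avoids all of this: it classifies \emph{additive} functors in $\P_{s,R}$ over an arbitrary commutative ring (via the universal additive quotient $Q^s_R$ of $\Gamma^s$, whose value on $R$ is $R/\gcd\binom{s}{k}$), then applies this classification to the multihomogeneous summands of $\Cr_dF$, which are additive in each variable; the arithmetic lemma on carries converts ``$s$ is a sum of $d$ powers of $p$'' into $d\in\INT(p,s)$.

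For part (2): you do not have a proof, and the strategy you sketch is circular. The number of steps in the induction on the $p$-exponent \emph{is} the torsion exponent of $F$, which is precisely the quantity the theorem bounds; saying each step contributes at most one factor of $p$ therefore bounds nothing, and indeed the theorem's whole point is that the exponent is controlled by $\deg(F)$ and $s$ alone, independently of any filtration length. The paper's mechanism is entirely different and quite elementary: every $x\in F(M)$ lies in the image of a map $\Gamma^s\to F_M$ (Yoneda plus the parameterization $F_M(N)=F(M\otimes N)$, which preserves weight and degree), and since $\deg F=d$ this map kills the images of all multiplications $\Gamma^\lambda\to\Gamma^s$ for $\lambda\in\Comp(s,d+1)$ (Lemma \ref{lm-vanish}); hence $x$ is hit by the universal quotient $C_{s,d+1}$, whose torsion is the gcd of the multinomials $\binom{s}{\lambda_1,\dots,\lambda_{d+1}}$, computed exactly by Kummer's and Legendre's theorems in Lemma \ref{lm-bound}. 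If you want to salvage your outline, that gcd computation is the ingredient your ``alignment'' step is missing, and it replaces rather than supplements the exponent induction.
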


In the second step, we prove that certain homological invariants are values of strict polynomial functors. 
Our method applies to the homological invariants which involve strict polynomial functors in their definition. 
This may seem very restrictive, but one should keep in mind that many classical algebraic constructions yield strict polynomial functors. 
\begin{itemize}
\item[$\bullet$] Schur and Weyl functors as defined in \cite{ABW}, are strict polynomial functors of fundamental importance in representation theory.
\item[$\bullet$] Free algebras associated to a symmetric operad in differential graded $R$-modules (free commutative algebras, free Lie algebras, etc.) yield strict polynomial functors (compare example \ref{ex-str-fct}(2) and \cite[5.1]{LodayValette}).
\item[$\bullet$] Some groups or group rings of fundamental importance are filtered, with strict polynomial functors as associated graded pieces. This is so for free groups by the Magnus-Witt isomorphism \cite{Magnus,Witt}, or for the group ring of a free abelian group (the graded pieces are symmetric powers).
\end{itemize}
The homology with coefficients $H_*(G,F(M))$ of a group $G$, where $M$ is a $RG$-module and $F\in\P_{s,R}$, provides a simple example of homological invariant depending on a strict polynomial functor. There exist less trivial examples, where the strict polynomial functors are hidden, like the stable homology of Eilenberg-Mac Lane spaces.

Given a homological invariant $\Psi(F)$ depending on a strict polynomial functor $F\in\P_{s,R}$, we introduce a parameter $M\in \V_R$ by replacing $F$ by the functor $F_M=F\circ (M\otimes_R-)$ in the construction. In this way, we obtain a `new' invariant $\Psi(F_M)$ which is a homogeneous strict polynomial functor of weight $s$ with respect to the parameter $M$. In section \ref{subsec-param}, we give conditions which allow to compute the degree of the functor $M\mapsto \Psi(F_M)$. When they apply, theorem \ref{thm-princ} gives us information on the integral torsion of $\Psi(F)=\Psi(F_R)$. We illustrate our method by the following examples.
\begin{enumerate}
\item The Taylor towers of strict polynomial functors. 
\item Derived functors of strict polynomial functors. 
\item Functor homology (computations of $\Ext$ or $\Tor$ groups in various functor categories). 
\item The cohomology of reductive algebraic groups.
\end{enumerate}

\section{Background}\label{sec-2}
This section collects some basic facts about polynomial and strict polynomial functors which will be needed in our proofs. We refer the reader e.g. to \cite{EML2,FS,Krause,PPan} for more details. 
\subsection{Polynomial functors}\label{subsec-pol}
Let $\A$ be an additive category, let $\B$ be an abelian category and let 
$F:\A\to \B$ be a functor. For all sums $X_1\oplus X_2$ in $\A$, $F(X_i)$ canonically identifies with a direct summand of $F(X_1\oplus X_2)$. We start with the definition of the cross-effects of $F$.
\begin{definition}[\cite{EML2}]
For all positive $d$, the $d$-th cross effect of $F$ is the functor $\Cr_d F:\A^{\times d}\to \B$, defined inductively as follows.
\begin{align*}
&\Cr_1 F (X)\oplus F(0)= F(X)\;,\\
&\Cr_2 F (X_1,X_2)\oplus \Cr_1 F(X_1)\oplus \Cr_2 F(X_2) = \Cr_1 F(X_1\oplus X_2)\;,
\end{align*}
And more generally, $\Cr_{d}F$ measures the non-additivity of $\Cr_{d-1}F$ with respect to its first variable:
\begin{align*}
\Cr_{d}F(X_1,\dots,X_d)\oplus &\Cr_{d-1}F(X_1,X_3,\dots,X_d)\oplus \Cr_{d-1}F(X_2,X_3,\dots,X_d)\\ 
&= \Cr_{d-1}F(X_1\oplus X_2,X_3\dots, X_d)\;.\\
\end{align*}
\end{definition}

One proves by induction the canonical decomposition:
\begin{align}F(X_1\oplus\dots\oplus X_d)= F(0)\oplus\left(\bigoplus_{k=1}^d\left(\bigoplus_{j_1<\dots<j_k} \Cr_{k}F(X_{j_1},\dots,X_{j_k})\,\right)\,\right) \;.\label{eqn-decomp}\end{align}
By letting the symmetric group $\Si_d$ act on the expression above by permuting the terms of the coproduct, we see that the $d$-th cross effect is symmetric: 
$$\Cr_d F(X_1,\dots, X_d)\simeq \Cr_d F(X_{\sigma(1)},\dots, X_{\sigma(d)})\;.$$
It also follows from the definition that $\Cr_d F$ is \emph{reduced}, i.e. it is zero as soon as one of its arguments is equal to zero. We denote by $\Func_*(\C^{\times d},\A)$ the category of reduced functors. The $d$-th cross effect yields an exact functor:
$$\Cr_d: \Func(\A,\B)\to \Func_*(\A^{\times d},\B)\;.$$
Precomposition by the diagonal functor $\Delta_d:\A\to \A^{\times d}$, $X\mapsto (X,\dots,X)$ yields an exact functor:
$$\Delta^*_d : \Func_*(\A^{\times d},\B)\to \Func(\A,\B)\;.$$
One easily proves that for all positive $d$,
$\Cr_d$ and $\Delta^*_d$ are adjoint on both sides.

\begin{definition}[\cite{EML2}]\label{def-deg}
A functor $F:\A\to \B$ is polynomial of degree less or equal to $d$ if $\Cr_{d+1}F$ is zero. It is polynomial of degree $d$ if $\Cr_{d+1}F$ equals zero and $\Cr_{d}F$ is non zero. By convention the zero functor has degree $-\infty$. We denote by $\deg(F)$ the degree of a functor $F$.
\end{definition}

In particular, a functor of degree zero is constant and a reduced functor of degree $1$ is an additive functor. Moreover, $F$ is polynomial of degree $d$ if and only if $\Cr_{d}F$ is nonzero and additive with respect to each of his arguments. Since the $d$-th cross effect functor is exact, any subfunctor or quotient of a degree $d$ functor has degree less or equal to $d$. 

\begin{definition} Let $\V_R$ be the category of finitely generated projective $R$-modules. We denote by $\FF_R$ the category of functors from $\V_R$ to all $R$-modules.  
\end{definition}
We have the following classical examples of polynomial functors in $\FF_R$.
\begin{example} Let $R$ be a commutative ring.
\begin{enumerate}
\item If $F,G\in\FF_R$ are polynomial functors of degree $d$, resp. $e$, the tensor product $F\otimes G:M\mapsto F(M)\otimes_R G(M)$ is polynomial of degree $d+e$.
\item The $d$-th tensor product functor $\otimes^d:M\mapsto M^{\otimes d}$, the $d$-th symmetric power functor $S^d:M\mapsto S^d(M)=(M^{\otimes n})_{\Si_d}$, the $d$-th exterior power functor $\Lambda^d:M\mapsto \Lambda^d(M)$ and the $d$-th divided power functor $\Gamma^d:M\mapsto \Gamma^d(M)=(M^{\otimes d})^{\Si_d}$ are examples of polynomial functors of degree $d$ of the category $\FF_R$.
\end{enumerate}
\end{example}
 
The fact that $\Delta_d^*$ is left adjoint to $\Cr_d$ implies a useful vanishing lemma.
\begin{lemma}\label{lm-vanish}
Let $G\in\FF_R$ be a polynomial functor of degree $d$. Assume that $(F_i)_{1\le i\le d+1}$ are reduced functors. Then 
$$\hom_{\FF_R}(F_1\otimes\dots\otimes F_{d+1},G)=0 \;.$$
\end{lemma}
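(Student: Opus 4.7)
The plan is to rewrite $F_1\otimes\cdots\otimes F_{d+1}$ as the diagonal pullback of an ``external'' tensor product functor on $\V_R^{\times(d+1)}$, and then apply the adjunction $\Delta_{d+1}^* \dashv \Cr_{d+1}$ from the excerpt together with the vanishing $\Cr_{d+1}G=0$ forced by the degree hypothesis on $G$.

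Concretely, I introduce the external tensor product
$$F_1\boxtimes\cdots\boxtimes F_{d+1}:\V_R^{\times(d+1)}\longrightarrow R\text{-Mod},\quad (M_1,\dots,M_{d+1})\longmapsto F_1(M_1)\otimes_R\cdots\otimes_R F_{d+1}(M_{d+1}).$$
By construction $\Delta_{d+1}^*(F_1\boxtimes\cdots\boxtimes F_{d+1}) = F_1\otimes\cdots\otimes F_{d+1}$. Moreover, since each $F_i$ is reduced, any substitution $M_i=0$ gives $F_i(M_i)=0$ and hence kills the whole tensor product; so $F_1\boxtimes\cdots\boxtimes F_{d+1}$ lies in $\Func_*(\V_R^{\times(d+1)}, R\text{-Mod})$, which is the category on which the adjunction is formulated.

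Given this packaging, the two-sided adjunction between $\Cr_{d+1}$ and $\Delta_{d+1}^*$ supplies
$$\hom_{\FF_R}(F_1\otimes\cdots\otimes F_{d+1},G) \;\cong\; \hom_{\Func_*(\V_R^{\times(d+1)},R\text{-Mod})}(F_1\boxtimes\cdots\boxtimes F_{d+1},\,\Cr_{d+1}G).$$
By Definition \ref{def-deg}, $G$ being polynomial of degree $d$ means $\Cr_{d+1}G = 0$, so the right-hand side is zero. There is no real obstacle: the entire argument is a formal consequence of the adjunction, and the only bookkeeping point is the observation that the external tensor product of reduced functors is reduced, so that the adjunction applies in the required category.
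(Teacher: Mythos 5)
Your proof is correct and matches the paper's intended argument: the paper gives no written proof but introduces the lemma precisely as a consequence of $\Delta_{d+1}^*$ being left adjoint to $\Cr_{d+1}$, which is exactly the adjunction you invoke after packaging the tensor product as $\Delta_{d+1}^*(F_1\boxtimes\cdots\boxtimes F_{d+1})$. The one bookkeeping point needed — that the external tensor product of reduced functors is reduced, so the adjunction applies — is correctly handled.
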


\subsection{Strict polynomial functors}\label{subsec-str-pol}
Let $R$ be a commutative ring and let $M\in\V_R$. The symmetric group $\Si_d$ acts on $M^{\otimes d}$ by permuting the factors of the tensor product.  
The Schur category $\Gamma^s \V_R$ is the $R$-linear category defined as
follows. It has the same objects as $\V_R$, the morphisms from $M$ to $N$ the
$\Si_s$-equivariant $R$-linear maps from $M^{\otimes s}$ to $N^{\otimes s}$, and
the composition law is the composition of equivariant morphisms (if $s=0$, we
let $M^{\otimes 0}=R$, and $\Si_0=\{1\}$).
The notation $\Gamma^s \V_R$ is justified by the following identification of
homomorphisms: 
\begin{align*}\hom_{\Gamma^s \V_R}(M,N)&=\hom_{\Si_s}(M^{\otimes s}, N^{\otimes
s})=\hom_R(M^{\otimes s}, N^{\otimes s})^{\Si_s}\\
&\simeq (\hom_R(M, N)^{\otimes s})^{\Si_s}=\Gamma^s(\hom_R(M,N))
\;. \end{align*}
\begin{definition} 
The category of homogeneous strict polynomial functors of weight $s$, denoted by
$\P_{s,R}$,  is the abelian category of $R$-linear functors from $\Gamma^s\V_R$
to $R$-modules. 
\end{definition}

\begin{remark}
This presentation of $\P_{s,R}$ follows \cite{Bous, PPan}. 
The category $\P_{s,R}$ is equivalent to the category of `homogeneous strict
polynomial functors of degree $s$' introduced by Friedlander and Suslin in
\cite{FS}. We already have a notion of degree recalled in definition
\ref{def-deg}, so we rather call the integer $s$ the \emph{weight} of the strict
polynomial functors to avoid confusions.
\end{remark}

There is a functor $\gamma_s:\V_R\to \Gamma^s\V_R$ which is the identity on
objects, and whose action on morphisms is given by $\gamma_s(f)=f^{\otimes s}$.
Precomposition by $\gamma_s$ yields a forgetful functor
$$\gamma_s^*:\P_{s,R}\to \F_R\;.$$
The following properties of the forgetful functor are an easy check.
\begin{lemma}\label{prop-oubli}
The forgetful functor $\gamma_s^*$ is faithful. Let $f:F\to G$ be a morphism of
strict polynomial functors. Then $f$ is a monomorphism (resp. epimorphism, resp.
isomorphism) if and only if $\gamma_s^*(f)$ is.
\end{lemma}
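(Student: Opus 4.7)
The plan is to reduce both assertions to the pointwise structure. Since $\gamma_s:\V_R\to\Gamma^s\V_R$ is the identity on objects, unwinding the definitions shows that for every $F\in\P_{s,R}$ and every $M\in\V_R$ one has $\gamma_s^*(F)(M)=F(M)$, and that for every natural transformation $\alpha:F\to G$ in $\P_{s,R}$ the components of $\gamma_s^*(\alpha)$ coincide with those of $\alpha$; the only change is that $\gamma_s^*(\alpha)$ is tested against the smaller class of morphisms of the form $f^{\otimes s}$ with $f\in\hom_R(M,N)$. Faithfulness of $\gamma_s^*$ is then immediate: $\gamma_s^*(\alpha)=0$ forces $\alpha_M=0$ for every $M$, hence $\alpha=0$.

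For the second assertion, the key input is pointwise detection of monomorphisms, epimorphisms and isomorphisms in both $\P_{s,R}$ and $\F_R$. In $\F_R$ this is standard, since kernels and cokernels of natural transformations between functors to $R\text{-Mod}$ are computed pointwise. The same holds in $\P_{s,R}$: given $\alpha:F\to G$, the assignment $M\mapsto\ker(\alpha_M)$ inherits from $F$ an $R$-linear action of $\Gamma^s\V_R$ on morphisms (the restriction of $F(\phi)$ to $\ker(\alpha_M)$ lands in $\ker(\alpha_N)$ by naturality of $\alpha$, and $R$-linearity of this restriction is inherited from that of $F(\phi)$), hence defines the categorical kernel of $\alpha$ in $\P_{s,R}$; the analogous statement holds for cokernels. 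Consequently, a morphism in either category is a monomorphism (resp. epimorphism, isomorphism) if and only if all of its components are. Since the components of $\alpha$ and of $\gamma_s^*(\alpha)$ are the same family of $R$-linear maps, this yields the claimed equivalence.

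The argument is essentially formal, and I do not expect any real obstacle. The only point that truly deserves verification is the closure of $\P_{s,R}$ under pointwise kernels and cokernels, but this reduces to the fact that these operations in $R\text{-Mod}$ preserve the $R$-bilinearity of the composition, so that the resulting subfunctor (resp. quotient functor) remains $R$-linear on $\Gamma^s\V_R$.
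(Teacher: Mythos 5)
Your argument is correct and is exactly the routine verification the paper leaves implicit (it only remarks that the lemma is ``an easy check''): since $\gamma_s$ is the identity on objects, $f$ and $\gamma_s^*(f)$ have the same components, and both $\P_{s,R}$ and $\F_R$ have pointwise kernels and cokernels, so monomorphisms, epimorphisms and isomorphisms are detected componentwise in each. Your verification that the pointwise kernel and cokernel of a morphism of $R$-linear functors on $\Gamma^s\V_R$ are again $R$-linear functors is precisely the point that needs checking, and it is done correctly.
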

Thus, the notion of strict polynomial functors can be thought of as an 
enrichment of the usual notion of functor from $\V_R$ to $R$-modules. If $F$ is
a strict polynomial functor we call $F\circ \gamma_s$ the `underlying ordinary
functor'. We often abuse notations and denote by the same letter a strict
polynomial functor and the underlying ordinary functor.

\begin{example} We have the following examples of homogeneous strict polynomial
functors of weight $s$.
\begin{enumerate}\label{ex-str-fct}
\item[(1)]
We denote by $\otimes^s$ the strict polynomial functor which sends an projective
module $M\in\V_R$ to the tensor product $M^{\otimes s}$, and whose effect on
morphisms is just given by the canonical inclusion
$$\hom_{\Si_s}(M^{\otimes s},N^{\otimes s})\hookrightarrow \hom_R(M^{\otimes
s},N^{\otimes s}) \;.$$

\item[(2)] The symmetric group $\Si_s$ acts on $\otimes^s$ by permuting the factors
of the tensor product. We denote by $\Gamma^s$ the intersection of the kernels
of the maps $\sigma-\Id:\otimes^s\to\otimes^s$, for $\sigma\in\Si_s$. 
More generally, a $\Si_s$-module $M$ yields strict polynomial functors
$(M\otimes_R \otimes^s)^{\Si_s}$ and $M\otimes_{R\Si_s}\otimes^s$.
\item[(3)] If $M$ is a finitely generated projective $R$-module, we let
$\Gamma^{s,M}$ be the strict polynomial functor defined by $\Gamma^{s,M}(N)=
\hom_{\Gamma^s\V_R}(M,N)$. In particular, $\Gamma^{s,R}$ is isomorphic to
$\Gamma^s$.
\end{enumerate}
\end{example}

The functors $\Gamma^{s,M}$ generate the category $\P_{s,R}$. To be more
specific, the Yoneda lemma yields a natural isomorphism: 
$$\hom_{\P_{s,R}}(\Gamma^{s,M},F)\simeq F(M)$$
and the canonical map 
$$\bigoplus_{n\ge 1} \Gamma^{s,R^n}\otimes F(R^n)\to F $$ 
is an epimorphism. Since the ordinary functor $N\mapsto \Gamma^s(\hom_R(R^n,N))$
is polynomial of degree $s$, and quotients of polynomial functors of degree $s$
are polynomial of degree less or equal to $s$, we have:
\begin{lemma}\label{lm-deg-poids}
If $F$ is a homogeneous strict polynomial functor of weight $s$, the underlying
ordinary functor is polynomial of degree less or equal to $s$.
\end{lemma}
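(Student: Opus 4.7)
The strategy is to realize $F$ as a quotient (in $\F_R$) of a functor we already know to be polynomial of bounded degree, and then invoke the fact, noted just after Definition \ref{def-deg}, that subfunctors and quotients of a polynomial functor of degree $\le s$ are themselves polynomial of degree $\le s$.

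First I would use the canonical epimorphism
\[
\bigoplus_{n\ge 1}\Gamma^{s,R^n}\otimes F(R^n)\twoheadrightarrow F
\]
in $\P_{s,R}$, which is produced by the Yoneda isomorphism $\hom_{\P_{s,R}}(\Gamma^{s,R^n},F)\simeq F(R^n)$ together with the fact that the $\Gamma^{s,R^n}$ generate $\P_{s,R}$ as $n$ varies. By Lemma \ref{prop-oubli}, $\gamma_s^*$ detects epimorphisms, so after forgetting the strict polynomial structure this remains an epimorphism of functors in $\F_R$. It therefore suffices to show that each summand $\Gamma^{s,R^n}\otimes F(R^n)$ has underlying ordinary degree $\le s$, and that arbitrary direct sums of such functors do too.

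For a single summand, note that $F(R^n)$ is a constant $R$-module; tensoring a functor by a constant module does not increase its degree, so I reduce to $\Gamma^{s,R^n}(N)=\Gamma^s(\hom_R(R^n,N))$. Now $\hom_R(R^n,-)\simeq (-)^n$ is additive (degree $1$), and the tensor product rule for degrees (the first item of the example on polynomial functors in $\F_R$) shows that $\otimes^s((-)^n)$ is polynomial of degree $s$. Since $\Gamma^s$ is defined as a subfunctor of $\otimes^s$, the composition $\Gamma^s(\hom_R(R^n,-))$ is a subfunctor of a degree $s$ functor, hence polynomial of degree $\le s$. Finally, direct sums preserve the degree bound because the cross-effect functor $\Cr_{s+1}$ commutes with direct sums, so a sum of functors each with vanishing $(s+1)$-th cross-effect again has vanishing $(s+1)$-th cross-effect. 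Combining these three points yields the lemma.

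The only step that requires any real verification is the identification of $\Gamma^s(\hom_R(R^n,-))$ as a subfunctor of a degree $s$ functor, and this follows cleanly from the inclusion $\Gamma^s\hookrightarrow\otimes^s$ from Example \ref{ex-str-fct}(2); the remaining ingredients are formal consequences of the behavior of cross-effects under tensor products, direct sums, and quotients. Essentially the proof is a formal synthesis of the generation statement for $\P_{s,R}$ with the degree-monotonicity properties of $\Cr_\bullet$ already recorded in Section \ref{subsec-pol}.
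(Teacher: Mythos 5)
Your proof is correct and follows essentially the same route as the paper: the paper deduces the lemma from the canonical epimorphism $\bigoplus_{n\ge 1}\Gamma^{s,R^n}\otimes F(R^n)\twoheadrightarrow F$, the fact that $N\mapsto\Gamma^s(\hom_R(R^n,N))$ has degree $s$, and the stability of the degree bound under quotients. You merely make explicit the small verifications (the inclusion $\Gamma^s\hookrightarrow\otimes^s$, behaviour of cross-effects under constant tensors and direct sums) that the paper leaves implicit.
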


\begin{remark}
Not all polynomial functors of $\F_R$ lie in the
essential image of the forgetful functor. See for example remark
\ref{rk-pol-str}. 
\end{remark}

We recall the algebra structure on divided powers, which will be crucial in the sequel of the
article. The sum of all the $(s,t)$-shuffles induces a
morphism of strict polynomial functors $\mathrm{sh}_{s,t}: (\otimes^s)\otimes
(\otimes^s)\to \otimes^{s+t}$.
These morphisms endow the tensor powers with the structure of a commutative
algebra, called the shuffle algebra \cite{EML1}. 
View $\Gamma^s\otimes\Gamma^t$ and $\Gamma^{s+t}$ as a subfunctors of
$\otimes^{s+t}$. Then $\mathrm{sh}_{s,t}$ restricts to a morphism 
$\mathrm{sh}_{s,t}:\Gamma^s\otimes\Gamma^t\to \Gamma^{s+t}$. So the divided
powers form a subalgebra of the shuffle algebra. One checks the following
properties from the definition of the divided power algebra.
\begin{lemma}\label{lm-prop-Gamma}
\begin{enumerate}
\item
Let $(s_1,\dots,s_n)$ be a $n$-tuple of positive integers of sum $s$. Then the
composite, where the first map is the canonical inclusion and the second one is
induced by the multiplication 
$$\Gamma^s\hookrightarrow\Gamma^{s_1}\otimes\dots \otimes \Gamma^{s_n}\to
\Gamma^s\;, $$
is equal to the multiplication by the multinomial
$\binom{s}{s_1,\dots,s_n}=\frac{s!}{s_1!\dots s_n!}$.
\item The multiplication of the divided power algebra induces for all
$M,N\in\V_R$ an isomorphism
$$\bigoplus_{t=0}^s\Gamma^t(M)\otimes\Gamma^{s-t}(N)\simeq\Gamma^s(M\oplus N)\;.
$$
\end{enumerate}
\end{lemma}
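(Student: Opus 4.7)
The plan is to prove both parts by unwinding the definitions and working inside $\otimes^s$, using the description $\Gamma^s(M)=(M^{\otimes s})^{\Si_s}$ and the fact that the shuffle product is by definition a sum over shuffle permutations acting on $\otimes^s$.

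For part (1), I would identify $\otimes^{s_1}\otimes\cdots\otimes\otimes^{s_n}$ with $\otimes^s$ in the standard way. Under this identification the inclusion $\Gamma^s\hookrightarrow\Gamma^{s_1}\otimes\cdots\otimes\Gamma^{s_n}$ is nothing but the observation that any $\Si_s$-invariant element of $\otimes^s$ is in particular invariant under the subgroup $\Si_{s_1}\times\cdots\times\Si_{s_n}$. The iterated shuffle multiplication $\mathrm{sh}_{s_1,\dots,s_n}:\otimes^{s_1}\otimes\cdots\otimes\otimes^{s_n}\to\otimes^s$ is, by definition, the map $x\mapsto\sum_{\sigma}\sigma\cdot x$, where $\sigma$ ranges over all $(s_1,\dots,s_n)$-shuffles in $\Si_s$. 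For $x\in\Gamma^s(M)$ every $\sigma\in\Si_s$ fixes $x$, hence the composite sends $x$ to $|\{\text{shuffles}\}|\cdot x=\binom{s}{s_1,\dots,s_n}\,x$, as wanted.

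For part (2), I would first decompose $(M\oplus N)^{\otimes s}$ as the direct sum, indexed by subsets $S\subset\{1,\dots,s\}$, of the tensor factors $M$ (for $i\in S$) and $N$ (for $i\notin S$). Grouping by $t=|S|$, the summand with $|S|=t$ is an $\Si_s$-orbit whose stabilizer at the canonical representative $S=\{1,\dots,t\}$ is $\Si_t\times\Si_{s-t}$. This yields an $\Si_s$-module isomorphism
\[
(M\oplus N)^{\otimes s}\;\simeq\;\bigoplus_{t=0}^s\mathrm{Ind}_{\Si_t\times\Si_{s-t}}^{\Si_s}\bigl(M^{\otimes t}\otimes N^{\otimes s-t}\bigr).
\]
Taking $\Si_s$-invariants and using Frobenius reciprocity (which, concretely, just picks a set of coset representatives and averages) gives $\Gamma^s(M\oplus N)\simeq\bigoplus_{t=0}^s\Gamma^t(M)\otimes\Gamma^{s-t}(N)$. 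To finish, I would check that this abstract identification coincides with the shuffle multiplication: an invariant element $x\otimes y\in\Gamma^t(M)\otimes\Gamma^{s-t}(N)$ is sent by $\mathrm{sh}_{t,s-t}$ to $\sum_\sigma\sigma\cdot(x\otimes y)$, and the invariance of $x$ and $y$ makes this sum equal to the unique $\Si_s$-invariant lift into the summand indexed by subsets of size $t$.

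The main obstacle is the second step of part (2): keeping the bookkeeping consistent so that the combinatorial orbit decomposition is literally realized by the shuffle product rather than just matching it up to a scalar. Once one is careful about choosing the coset representatives to be precisely the $(t,s-t)$-shuffles, the compatibility becomes transparent and the direct sum decomposition is read off directly.
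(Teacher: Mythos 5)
Your proof is correct; the paper actually omits any argument for this lemma (``one checks \dots from the definition''), and your verification --- the product on $\Gamma^{s_1}\otimes\dots\otimes\Gamma^{s_n}\subset\otimes^s$ is the sum over the $\binom{s}{s_1,\dots,s_n}$ block shuffles, each of which fixes a $\Si_s$-invariant element, together with the orbit/induced-module decomposition of $(M\oplus N)^{\otimes s}$ --- is exactly the standard check being left to the reader. Two small points to tighten: the isomorphism $\bigl(\mathrm{Ind}_{\Si_t\times\Si_{s-t}}^{\Si_s}V\bigr)^{\Si_s}\simeq V^{\Si_t\times\Si_{s-t}}$ is realized by a \emph{sum} over the $(t,s-t)$-shuffle coset representatives (as you in fact use in your last step), not an ``average,'' which is not defined over $\Z$; and the identifications such as $\Gamma^t(M)\otimes\Gamma^{s-t}(N)\simeq\bigl(M^{\otimes t}\otimes N^{\otimes s-t}\bigr)^{\Si_t\times\Si_{s-t}}$ use that $M,N\in\V_R$ are finitely generated projective, hence flat.
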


\section{Degrees, weights and integral torsion}\label{sec-3}

\subsection{The classification of additive strict polynomial functors}
Given a positive integer
$s$, we denote by $Q_R^s\in\P_{s,R}$ the cokernel of the morphism induced by the
multiplications (if $s=1$, the sum on the left is zero, so $Q_R^1=\Gamma^1$ is
the identity functor): 
\begin{align}\bigoplus_{0<k<s}\Gamma^{k}\otimes \Gamma^{s-k}\to
\Gamma^{s}\;.\label{eq-morph}\end{align}

\begin{lemma}
The functor $Q^s_R$ is additive.
\end{lemma}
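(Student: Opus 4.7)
The plan is to show directly that $Q^s_R$ commutes with finite direct sums, using the explicit decomposition of $\Gamma^s$ on a direct sum provided by Lemma \ref{lm-prop-Gamma}(2) and the compatibility of the multiplication with this decomposition. Since $\Gamma^s(0)=0$ for $s\ge 1$, reducedness is automatic, so the only thing to check is that the canonical map $Q^s_R(M\oplus N)\to Q^s_R(M)\oplus Q^s_R(N)$ induced by the projections is an isomorphism.

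First, I would use Lemma \ref{lm-prop-Gamma}(2) to identify
$$\Gamma^s(M\oplus N)=\bigoplus_{t=0}^s\Gamma^t(M)\otimes \Gamma^{s-t}(N),$$
where the summand $\Gamma^t(M)\otimes\Gamma^{s-t}(N)$ sits inside $\Gamma^s(M\oplus N)$ via the multiplication of the divided power algebra. Likewise, for each $0<k<s$ the source $\Gamma^k(M\oplus N)\otimes \Gamma^{s-k}(M\oplus N)$ decomposes as
$$\bigoplus_{i+j=k,\ c+d=s-k}\Gamma^i(M)\otimes\Gamma^j(N)\otimes\Gamma^c(M)\otimes\Gamma^d(N).$$
Using the commutativity and associativity of the divided power algebra, a simple tensor $\alpha\otimes\beta\otimes\gamma\otimes\delta$ in this summand is sent by the multiplication $\mu_{k,s-k}$ to the product $(\alpha\gamma)\otimes(\beta\delta)\in\Gamma^{i+c}(M)\otimes\Gamma^{j+d}(N)$, i.e.\ into the $(i+c,\,j+d)$-summand of $\Gamma^s(M\oplus N)$.

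Next, I would analyze the image of the total multiplication map summand by summand. For $0<t<s$, specializing to $(i,j,c,d)=(t,0,0,s-t)$ (which satisfies $0<k=t<s$) gives that every element of $\Gamma^t(M)\otimes\Gamma^{s-t}(N)$ lies in the image, because $\mu_{t,0}$ and $\mu_{0,s-t}$ are the canonical isomorphisms. So the whole summand $\Gamma^t(M)\otimes\Gamma^{s-t}(N)$ is killed in the cokernel. In contrast, in the extreme summand $\Gamma^s(M)$ (the case $t=s$), the condition $j=d=0$ forces the contribution to come from $(i,c)$ with $i+c=s$ and $0<i<s$, so the image in this summand is exactly the image of the multiplication $\bigoplus_{0<i<s}\Gamma^i(M)\otimes\Gamma^{s-i}(M)\to\Gamma^s(M)$ defining $Q^s_R(M)$; symmetrically for the summand $\Gamma^s(N)$.

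Taking the quotient then gives
$$Q^s_R(M\oplus N)\simeq Q^s_R(M)\oplus Q^s_R(N),$$
and one checks that this isomorphism is induced by the functoriality of $Q^s_R$ applied to the projections. The only subtlety is the bookkeeping in the paragraph above; once one commits to tracking the four indices $(i,j,c,d)$ and uses that the divided power algebra is commutative, every step is forced. There is no analytic obstacle, just a careful organization of the bigraded multiplication on $\Gamma^\ast(M)\otimes\Gamma^\ast(N)\simeq \Gamma^\ast(M\oplus N)$.
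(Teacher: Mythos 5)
Your proof is correct and follows essentially the same route as the paper: both use the exponential isomorphism $\Gamma^s(M\oplus N)\simeq\bigoplus_t\Gamma^t(M)\otimes\Gamma^{s-t}(N)$ from Lemma \ref{lm-prop-Gamma}(2) and observe that the cross-terms $\Gamma^t(M)\otimes\Gamma^{s-t}(N)$ with $0<t<s$ lie in the image of the multiplication map \eqref{eq-morph}, hence vanish in the cokernel. The paper phrases this more tersely (it suffices to hit the cross-effect summand, by exactness of $\Cr_2$), while you additionally verify that the multiplication respects the bigrading so that the image in the extreme summands $\Gamma^s(M)$ and $\Gamma^s(N)$ is precisely what defines $Q^s_R(M)$ and $Q^s_R(N)$; that extra bookkeeping is a legitimate and slightly more self-contained way of reaching the same conclusion.
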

\begin{proof}
By lemma \ref{lm-prop-Gamma} the multiplication induces an isomorphism
$\bigoplus_{t=0}^s\Gamma^t(M)\otimes\Gamma^{s-t}(N)\simeq \Gamma^{s}(M\oplus
N)$. Thus, to prove the lemma, it suffices to prove that
$\bigoplus_{t=1}^{s-1}\Gamma^t(M)\otimes\Gamma^{s-t}(N)$ is in the image of the
map \eqref{eq-morph}, which is obvious.
\end{proof}

\begin{lemma}\label{lm-univ}
Let  $F\in \P_{s,R}$ be additive. There is an isomorphism of strict polynomial
functors $F\simeq Q^s_R\otimes F(R)$.
\end{lemma}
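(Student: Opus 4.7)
The plan is to produce a canonical map $\eta\colon Q^s_R\otimes F(R)\to F$ by Yoneda, and then verify it is an isomorphism by reducing to evaluation at $M=R$ using additivity.

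First I would construct $\eta$. By the Yoneda isomorphism $F(R)\simeq \hom_{\P_{s,R}}(\Gamma^s,F)$, each $x\in F(R)$ yields a morphism $\phi_x\colon \Gamma^s\to F$, and these assemble $R$-bilinearly into a natural transformation $\phi\colon \Gamma^s\otimes F(R)\to F$ of strict polynomial functors. To descend $\phi$ to the quotient $Q^s_R\otimes F(R)$, one must show that for every $0<k<s$ the composite $\Gamma^k\otimes\Gamma^{s-k}\to\Gamma^s\xrightarrow{\phi_x}F$ vanishes, which amounts to the vanishing $\hom_{\P_{s,R}}(\Gamma^k\otimes\Gamma^{s-k},F)=0$. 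By the faithfulness of the forgetful functor (Lemma \ref{prop-oubli}) it suffices to check this at the level of ordinary functors, and there it follows from Lemma \ref{lm-vanish}: the factors $\Gamma^k$ and $\Gamma^{s-k}$ are reduced, while $F$ is additive, hence of degree at most $1$. This is the substantive step and the main use of the additivity hypothesis.

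Next I would verify that $\eta$ is an isomorphism. Again by Lemma \ref{prop-oubli}, it suffices to work with underlying ordinary functors. Both $F$ and $Q^s_R\otimes F(R)$ are additive functors $\V_R\to R\text{-Mod}$, the latter because $Q^s_R$ is additive by the preceding lemma and tensoring with the constant module $F(R)$ preserves additivity. Since every object of $\V_R$ is a direct summand of some $R^n$, and since a natural transformation of additive functors is compatible with these direct sum decompositions, $\eta$ is an isomorphism everywhere if and only if $\eta_R$ is.

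Finally, to check $\eta_R$: under the identification $\Gamma^s(R)\simeq R$, the evaluation at $M=R$ of $\phi$ is just the scalar multiplication $R\otimes_R F(R)\to F(R)$, which is an isomorphism. This map factors as the canonical surjection $\Gamma^s(R)\otimes F(R)\twoheadrightarrow Q^s_R(R)\otimes F(R)$ followed by $\eta_R$; since the first map is surjective and the composite is an isomorphism, $\eta_R$ must itself be an isomorphism, completing the argument.
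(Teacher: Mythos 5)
Your proof is correct and follows essentially the same route as the paper's: the Yoneda-induced map $\Gamma^s\otimes F(R)\to F$, descent to $Q^s_R\otimes F(R)$ via faithfulness of the forgetful functor together with Lemma \ref{lm-vanish} applied to the additive (degree $\le 1$) functor $F$, and the reduction to evaluation at $R$ by additivity. No gaps.
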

\begin{proof}

By the Yoneda lemma there is a morphism $\phi:\Gamma^s\otimes F(R)\to F$, which
induces an isomorphism $\phi_R:\Gamma^s(R)\otimes F(R)\xrightarrow[]{\simeq}
F(R)$. We claim that the following composite is zero (where the morphism on the
left is induced by the multiplication of the divided power algebra):
$$\bigoplus_{0<k<s}\Gamma^{k}\otimes \Gamma^{s-k}\otimes F(R)\to
\Gamma^{s}\otimes F(R)\xrightarrow[]{\phi} F\;.$$
Indeed, the forgetful functor is faithful so it suffices to prove 
that the same composite, viewed in $\F_R$, is zero. Since $F$ is additive, the
composite is zero by lemma \ref{lm-vanish}.
Thus, $\phi$ induces a morphism of additive functors
$\overline{\phi}:Q_R^s\otimes F(R)\to F$, and we have a commutative diagram of
$R$-modules
$$\xymatrix{
\Gamma^s(R)\otimes F(R)\ar[rrd]^-{{\phi}_R}_-{\simeq}\ar@{->>}[d]\\
Q_R^s(R)\otimes F(R)\ar@{->>}[rr]_-{\overline{\phi}_R} &&F(R)\;.
}$$
In particular $\overline{\phi}_R$ is an isomorphism. 
By additivity, this implies that $\overline{\phi}$ is an isomorphism of strict
polynomial functors.
\end{proof}

\begin{lemma}\label{lm-val-R}
Let $s\ge 2$. If $s$ is not a power of a prime integer then $Q_R^s$ is zero. If
$s$ is a power of a prime integer $p$, the $R$-module $Q_R^s(R)$ is isomorphic
to $R/pR$.
\end{lemma}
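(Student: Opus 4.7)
The plan is to evaluate the defining cokernel of $Q_R^s$ at $R$, where divided powers collapse to free rank-one modules. First I will identify $\Gamma^k(R) = (R^{\otimes k})^{\Si_k} \cong R$ canonically, with generator $1^{[k]}$. Under this identification the inclusion $\Gamma^s(R) \hookrightarrow \Gamma^k(R) \otimes_R \Gamma^{s-k}(R)$ becomes the identity of $R$, since both sides are successively larger symmetric invariants of $R^{\otimes s} = R$. Combined with Lemma \ref{lm-prop-Gamma}(1), this will force the multiplication $\Gamma^k(R) \otimes \Gamma^{s-k}(R) \to \Gamma^s(R)$ to be multiplication by $\binom{s}{k}$, so summing over $0<k<s$ gives
\[ Q_R^s(R) \;\cong\; R \,\bigl/\, d_s R, \qquad d_s := \gcd\bigl\{\tbinom{s}{k} : 0 < k < s\bigr\}. \]

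Next I will invoke the classical arithmetic fact that $d_s = p$ when $s = p^n$ is a prime power, and $d_s = 1$ otherwise. An elementary proof goes through Kummer's theorem, which expresses the $p$-adic valuation of $\binom{s}{k}$ as the number of carries when adding $k$ to $s-k$ in base $p$: if $s = p^n$, every such sum must carry at least once (and the choice $k = p^{n-1}$ shows the valuation is exactly $1$); if $s$ has at least two distinct prime divisors, then for any prime $p$ one can write $s = p^a m$ with $m \ge 2$ and $\gcd(p, m) = 1$, and the choice $k = p^a$ produces no carry. This gives $Q_R^s(R) \cong R/pR$ in the prime-power case and $Q_R^s(R) = 0$ otherwise.

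In the non-prime-power case I still need to upgrade the pointwise vanishing to the vanishing of $Q_R^s$ as a functor. For this I will use that $Q_R^s$ is additive by the preceding lemma and apply Lemma \ref{lm-univ} with $F = Q_R^s$, yielding $Q_R^s \simeq Q_R^s \otimes Q_R^s(R) = 0$. The only genuine obstacle is the arithmetic identification of $d_s$; the functorial content of the lemma reduces to this single computation at $R$ together with the structural results already established.
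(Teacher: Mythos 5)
Your proposal is correct and follows essentially the same route as the paper: evaluate the defining cokernel at $R$, identify the multiplication maps with multiplication by $\binom{s}{k}$ via Lemma \ref{lm-prop-Gamma}(1), compute the gcd of the binomial coefficients (the paper cites Dold--Puppe or its Lemma \ref{lm-bound} where you sketch the Kummer-theorem argument), and use additivity to upgrade $Q_R^s(R)=0$ to $Q_R^s=0$. The only micro-detail worth adding is that in the prime-power case the gcd divides $\binom{s}{1}=p^n$, so it is automatically a power of $p$ and your valuation computation pins it down to exactly $p$.
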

\begin{proof} For all $i$, there is a canonical isomorphism $\Gamma^i(R)\simeq
R$, and the map
$$R\simeq \Gamma^k(R)\otimes \Gamma^{s-k}(R) \xrightarrow[]{\mathrm{mult}}
\Gamma^s(R)\simeq R$$
is equal to the multiplication by $\binom{s}{k}$. Thus $Q_R^s(R)$ is isomorphic
to $R/nR$, where $n$ is the greatest common denominator of the integers
$\binom{s}{k}$, $0< k<s$. By elementary arithmetics (see e.g. \cite[Hilfsatz 10.10]{DP2} or our lemma \ref{lm-bound}), $n$ equals $p$ if $s$ is
a power of a prime $p$, and $1$ otherwise. In the latter case, since
$Q^s_R(R)=0$ so $Q^s_R=0$ by additivity.  
\end{proof}

When $s=p^r$ is a power of a prime $p$ (with $r>0$), one would like a more
concrete description of $Q^s_R$. For this, we use the Frobenius twist functors
defined in \cite{SFB}. Given a $\Fp$-algebra $A$, the
$r$-th Frobenius twist functor $I^{(r)}_A \in\P_{p^r,A}$ is an additive
functor, whose values can be concretely described as follows. The $A$-module
$I^{(r)}_A(M)$ is the quotient of the free $A$-module on $M$, modulo the
relations (for all $m,n\in M$ and $a\in A$): 
$$(m+n)^{(r)}=m^{(r)}+n^{(r)}\;, \qquad(am)^{(r)}=a^{p^r}m^{(r)}\;,$$
where $m^{(r)}$ denotes the basis element of the free $A$-module on $M$ indexed
by $m\in M$. In particular, $I^{(r)}_A(A)\simeq A$. 

We extend the definition of Frobenius twists functors to arbitrary
commutative ground  rings $R$ in the following way. Given a prime integer $p$,
we define the $r$-th Frobenius twist functor $I^{(r)}_{p,R}\in\P_{p^r,R}$ as the
composite:
$$ \Gamma^{p^r}\V_R\xrightarrow[]{\otimes
\Fp}\Gamma^{p^r}\V_{R/pR}\xrightarrow[]{I^{(r)}_{R/pR}} R\text{-Mod}\;.$$
In particular $I^{(r)}_{p,R}(M)=I^{(r)}_{R/pR}(M/pM)$, so $I^{(r)}_{p,R}$
is additive and $I^{(r)}_{p,R}(R)$ is isomorphic to $R/pR$.

\begin{lemma}\label{lm-descr}
Let $p$ be a prime and let $r$ be a positive integer. Then $Q^{p^r}_R$ is
isomorphic to the $r$-th Frobenius twist functor $I^{(r)}_{p,R}$. 
\end{lemma}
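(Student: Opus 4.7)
The plan is to combine the universal property of $Q^{p^r}_R$ among additive strict polynomial functors (Lemma \ref{lm-univ}) with the computation of values at $R$ (Lemma \ref{lm-val-R}), using only that $I^{(r)}_{p,R}$ is additive and takes values in $R/pR$-modules, which is built into its definition.

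First, I would verify the basic properties of $I^{(r)}_{p,R}$ already recorded above its statement: it lives in $\P_{p^r,R}$, it is additive, its values $I^{(r)}_{p,R}(M) = I^{(r)}_{R/pR}(M/pM)$ are $R/pR$-modules, and in particular $I^{(r)}_{p,R}(R) \simeq R/pR$. Then Lemma \ref{lm-univ} applied to $F = I^{(r)}_{p,R}$ yields a natural isomorphism of strict polynomial functors
\begin{equation*}
I^{(r)}_{p,R} \;\simeq\; Q^{p^r}_R \otimes_R I^{(r)}_{p,R}(R) \;\simeq\; Q^{p^r}_R \otimes_R R/pR.
\end{equation*}

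Next I would apply Lemma \ref{lm-univ} to $F = Q^{p^r}_R$ itself: this functor is additive (by the first lemma of the section), and $Q^{p^r}_R(R) \simeq R/pR$ by Lemma \ref{lm-val-R}. The lemma therefore gives an isomorphism
\begin{equation*}
Q^{p^r}_R \;\simeq\; Q^{p^r}_R \otimes_R Q^{p^r}_R(R) \;\simeq\; Q^{p^r}_R \otimes_R R/pR,
\end{equation*}
which exhibits that the values of $Q^{p^r}_R$ are already annihilated by $p$. Chaining the two displayed isomorphisms produces $I^{(r)}_{p,R} \simeq Q^{p^r}_R$, as desired.

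There is no real obstacle here: once one observes that both $Q^{p^r}_R$ and $I^{(r)}_{p,R}$ fall within the scope of Lemma \ref{lm-univ} and both take the value $R/pR$ on $R$, the universal description forces them to coincide. The only subtlety worth being careful about is to make sure the isomorphism produced by Lemma \ref{lm-univ} is one of strict polynomial functors (it is, by construction via the Yoneda map $\phi$), rather than merely of underlying ordinary functors, so that no reliance on the faithfulness of $\gamma_s^*$ is needed.
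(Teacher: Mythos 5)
Your proof is correct and follows essentially the same route as the paper's: apply Lemma \ref{lm-univ} to $F=I^{(r)}_{p,R}$ to get $I^{(r)}_{p,R}\simeq Q^{p^r}_R\otimes R/pR$, and identify $Q^{p^r}_R\otimes R/pR$ with $Q^{p^r}_R$ via Lemma \ref{lm-val-R}. The only (harmless) difference is that you justify the latter identification by a second application of Lemma \ref{lm-univ} to $Q^{p^r}_R$ itself, a step the paper leaves implicit.
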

\begin{proof}
Since $Q^{p^r}_R(R)=R/pR$ we have an isomorphism $Q^{p^r}_R\simeq
Q^{p^r}_R\otimes R/pR$. The result now follows from
lemma \ref{lm-univ}, applied to $F=I^{(r)}_{p,R}$. 
\end{proof}

Gathering the results of lemmas \ref{lm-univ}, \ref{lm-val-R} and
\ref{lm-descr}, we obtain the following classification of additive strict
polynomial functors over an arbitrary ring $R$.

\begin{proposition}\label{prop-classif-add}
Let $R$ be a commutative ring, let $s$ be a positive integer and let
$F\in\P_{s,R}$ be additive. 
\begin{enumerate}
\item If $s=1$, there is an isomorphism $F\simeq \Id\otimes F(R)$.
\item If $s\ge 2$, there exists a prime $p$ and a positive integer $r$ such that
$s=p^r$, and an isomorphism $F\simeq I^{(r)}_{p,R}\otimes F(R)$. In
particular the values of $F$ only consist of $p$-torsion elements.
\end{enumerate}
\end{proposition}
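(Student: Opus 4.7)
The proposition is essentially a direct assembly of the three preceding lemmas, so my plan is to combine them in the obvious way and handle the two cases separately.

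For part (1), with $s=1$, I first observe that the definition of $Q_R^s$ specializes to $Q_R^1 = \Gamma^1$ (the indexing set $\{0<k<s\}$ is empty so the cokernel is just $\Gamma^1$), and $\Gamma^1$ is canonically the identity functor $\mathrm{Id}$ on $\V_R$. Then Lemma \ref{lm-univ} applied to the additive functor $F$ yields $F \simeq Q_R^1 \otimes F(R) \simeq \mathrm{Id} \otimes F(R)$.

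For part (2), with $s \geq 2$, Lemma \ref{lm-univ} gives the isomorphism $F \simeq Q_R^s \otimes F(R)$. Now I split into two subcases depending on whether $s$ is a prime power. If $s$ is not a prime power, Lemma \ref{lm-val-R} says that $Q_R^s = 0$, which forces $F = 0$ (contradicting the existence of a nonzero $F$; note that for $F = 0$ the statement of part (2) holds vacuously only if we interpret it as `either $F=0$ or $s$ is a prime power and $F \simeq I^{(r)}_{p,R} \otimes F(R)$'; I should state this carefully in the final write-up). If $s = p^r$ for some prime $p$ and integer $r \geq 1$, then Lemma \ref{lm-descr} identifies $Q_R^{p^r}$ with the Frobenius twist functor $I^{(r)}_{p,R}$, giving $F \simeq I^{(r)}_{p,R} \otimes F(R)$.

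For the final claim about $p$-torsion values: the functor $I^{(r)}_{p,R}$ factors through the reduction functor $\otimes \mathbb{F}_p : \Gamma^{p^r}\V_R \to \Gamma^{p^r}\V_{R/pR}$ by its very definition, so $I^{(r)}_{p,R}(M) = I^{(r)}_{R/pR}(M/pM)$ is an $R/pR$-module, hence annihilated by $p$. The tensor product $I^{(r)}_{p,R}(M) \otimes F(R)$ is therefore likewise annihilated by $p$, so $F(M)$ consists of $p$-torsion elements. There is no real obstacle here — the entire proof is a routine bookkeeping exercise assembling the lemmas just established, and the only point requiring any care is the interpretation of the statement when $s$ is not a prime power (where the conclusion is really that $F$ must vanish).
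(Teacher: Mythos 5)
Your proof is correct and is exactly the argument the paper intends: the paper gives no separate proof beyond the remark that the proposition is obtained by ``gathering the results of lemmas \ref{lm-univ}, \ref{lm-val-R} and \ref{lm-descr}'', which is precisely your assembly ($F\simeq Q^s_R\otimes F(R)$, then $Q^1_R=\Gamma^1=\Id$ for $s=1$, vanishing of $Q^s_R$ for $s$ not a prime power, and $Q^{p^r}_R\simeq I^{(r)}_{p,R}$ otherwise, with the $p$-torsion claim coming from $I^{(r)}_{p,R}(M)$ being an $R/pR$-module). Your side remark that part (2) as stated implicitly assumes $F\neq 0$ (else $s$ need not be a prime power) correctly identifies a minor imprecision in the proposition's phrasing rather than a gap in the argument.
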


\begin{remark}\label{rk-pol-str}Many additive
functors of $\F_R$ do not come from strict polynomial functors. For example, we
can define for each ring morphism $\phi:R\to R$ an
additive functor
$I^{\phi}$, which sends a $M\in\V_R$ to the quotient of the free $R$-module
with basis $(b_m)_{m\in M}$ by the relations
$b_{m+n}=b_m+b_n$ and  $b_{rm}=\phi(r)b_m$.
The Frobenius twist functors are of this form, but there might be other ring
morphisms, which yield different additive functors.
\end{remark}

\subsection{Weight vs degree for $p$-primary functors}\label{subsec-w-vs-d}

In this section, we prove theorem 
\ref{thm-princ}(1).  The case of additive (or
degree one) functors is already handled by proposition \ref{prop-classif-add}.
To study the case of higher degree functors, we need to lift the cross effect
functors $\Cr_n$ to the level of the categories of strict polynomial (multi)functors.
This is already used in \cite{Bous,FFSS,SFB,TouzeClassical}. We briefly recall how it
works.

For $n\ge 1$, the Schur category $\Gamma^s(\V_R^{\times n})$ is the $R$-linear category defined as follows. It has the same objects as $\V_R^{\times n}$. The $R$-module of morphisms are  defined by 
$$\hom_{\Gamma^s(\V_R^{\times n})}(M,N)=\Gamma^s\left(\hom_{\V_R^{\times n}}(M,N)\right)\;.$$
The composition law $\circ$ is the unique $R$-linear morphism fitting into the following diagram
$$\xymatrix{
\Gamma^s(\hom_{\V_R^{\times n}}(N,P))\otimes \Gamma^s(\hom_{\V_R^{\times n}}(M,N))\ar[r]^-{\circ}\ar@{^{(}->}[d]& \Gamma^s(\hom_{\V_R^{\times n}}(M,P))\ar@{^{(}->}[d]\\
(\hom_{\V_R^{\times n}}(N,P)\otimes\hom_{\V_R^{\times n}}(M,N))^{\otimes s}\ar[r] &\hom_{\V_R^{\times n}}(M,P)^{\otimes s}
}$$
where the vertical maps are the canonical inclusions and the bottom horizontal map is induced by the composition in $\V_R^{\times n}$. If $n=1$, this is the same as the Schur category defined in section \ref{subsec-str-pol}. As in the case $n=1$, there is a functor $\gamma_s:\V_R^{\times n}\to \Gamma^s(\V_R^{\times n})$, which is the identity on objects, and whose effect on morphisms is given by $\gamma_s(f)=f^{\otimes s}$.
\begin{definition}
The category $\P_{s,R}(n)$ of strict polynomial $n$-functors of total weight $s$ is the category of $R$-linear functors from $\Gamma^s(\V_R^{\times n})$ to $R$-modules. The functor $\gamma_s$ induces a forgetful functor 
$$\gamma_s^*:\P_{s,R}(n)\to \Func_*(\V_R^{\times n},R\text{-Mod})\;.$$
\end{definition}

To extend cross effects to the category of strict polynomial functors, we use the following two observations. First, there is a functor $\Gamma^s(\V_R^{\times n})\to \Gamma^s(\V_R^{\times n-1})$ which sends $(M_i)_{1\le i\le n}$ to $(M_1\oplus M_2, M_3\dots M_n)$ (and which is the canonical inclusion on morphisms). Thus, if $F\in\P_{s,R}(n-1)$ then 
$$F(M_1\oplus M_2,M_3\dots,M_n)$$
can be seen as an element of $\P_{s,R}(n)$, with variables $M_1,\dots,M_n$. 
Moreover, if $m\le n$, the canonical projections $\V_R^{\times n}\to \V_R^{\times m}$ and inclusions $\V_R^{\times m}\to \V_R^{\times n}$ on a chosen set of $m$ factors of $\V_R^{\times n}$ induce functors between the corresponding Schur categories. In particular, for $i=1,2$ the morphisms induced by the canonical projection from $M_1\oplus M_2$ to $M_i\oplus 0$ induces a morphism of strict polynomial $n$-functors
$$F(M_1\oplus M_2,M_3,\dots,M_n)\to F(M_i\oplus 0,M_3,\dots,M_n) \;.$$
Thus, there is a well defined functor $\Cr_n:\P_{s,R}\to \P_{s,R}(n)$ fitting into a commutative diagram
$$\xymatrix{
\P_{s,R}\ar[r]^-{\Cr_n}\ar[d]^{\gamma_s^*} & \P_{s,R}(n)\ar[d]^{\gamma_s^*}\\
\F_{R}\ar[r]^-{\Cr_n} & \Func_*(\V_R^{\times n},R\text{-Mod})\;.
}$$

To analyze cross-effects of strict polynomial functors, we use the subcategories of homogeneous strict polynomial functor with weights the $n$-tuple $(s_1,\dots,s_n)$ of nonnegative integers with $\sum_{i=1}^n s_i=s$. To be more specific, let 
$\Gamma^{(s_1,\dots,s_n)}(\P_R^{\times n})$ denote the $R$-linear category with the same objects as $\V_R^{\times n}$, whose morphisms from $M=(M_i)_{1\le i\le n}$ and $N=(N_i)_{1\le i\le n}$ are the $\prod_{i=1}^n\Si_{s_i}$-equivariant morphisms from $\bigotimes_{i=1}^n(M_i)^{\otimes s_i}$ to $\bigotimes_{i=1}^n(N_i)^{\otimes s_i}$, and whose composition law is composition of equivariant morphisms. (By convention, $M^{\otimes 0}=R$ and $\Si_0=\{1\}$ in this definition). 

\begin{definition}
The category $\P_{s_1,\dots,s_n,R}(n)$ of strict polynomial $n$-functors of weight $(s_1,\dots,s_n)$ is the category of $R$-linear functors from $\Gamma^{(s_1,\dots,s_n)}(\V_R^{\times n})$ to $R$-modules.  
\end{definition}

There is a canonical isomorphism, compatible with composition of morphisms, where the sum is taken over all $n$-tuples $(s_1,\dots,s_n)$ of nonnegative integers such that $\sum s_i=s$:
\begin{align}\hom_{\Gamma^{s}(\V_R^{\times n})}(M,N)\simeq
\bigoplus\hom_{\Gamma^{(s_1,\dots,s_n)}(\V_R^{\times
n})}(M,N)\;.\label{eq-decomposition}\end{align}
Thus we have the following functorial analogue of the usual decomposition of modules over a ring with a finite set of central idempotents.
\begin{lemma}\label{lm-decomp}
The category $\P_{s_1,\dots,s_n,R}(n)$ identifies to an abelian subcategory of $\P_{s,R}(n)$ and there is a decomposition, where the sum is taken over all $n$-tuples of nonnegative  integers $(s_1,\dots,s_n)$ such that $\sum_{i=1}^n s_i=s$: 
$$\P_{s,R}(n)=\bigoplus \P_{s_1,\dots,s_n,R}(n)\;.$$
\end{lemma}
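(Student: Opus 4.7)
The plan is to upgrade the hom-module decomposition \eqref{eq-decomposition} to an orthogonal idempotent decomposition of the identity endomorphisms, and then split every strict polynomial $n$-functor accordingly. For an object $M=(M_1,\dots,M_n)$ of $\Gamma^s(\V_R^{\times n})$, the identity morphism is $\gamma_s(\Id_M) \in \Gamma^s(\bigoplus_i\hom_R(M_i,M_i))$, and iterating the exponential isomorphism of lemma \ref{lm-prop-Gamma}(2) yields
$$\Id_M = \sum_{(s_1,\dots,s_n)} e^M_{(s_1,\dots,s_n)}\;,\qquad e^M_{(s_1,\dots,s_n)} := \Id_{M_1}^{[s_1]}\otimes\cdots\otimes\Id_{M_n}^{[s_n]}\;,$$
the sum running over tuples with $\sum s_i = s$, with $e^M_{(s_1,\dots,s_n)}$ lying in the corresponding summand of \eqref{eq-decomposition}.

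The compatibility of \eqref{eq-decomposition} with composition means that composing a morphism of type $(s_1,\dots,s_n)$ with one of type $(t_1,\dots,t_n)$ lies in the summand of type $(s_1,\dots,s_n)$ when the tuples agree, and is zero otherwise. Applied to $\Id_M\circ\Id_M=\Id_M$ this forces the $e^M_{(s_1,\dots,s_n)}$ to be orthogonal idempotents summing to $\Id_M$. Moreover, projecting each hom set onto its $(s_1,\dots,s_n)$-summand defines an $R$-linear functor
$$\pi_{(s_1,\dots,s_n)} : \Gamma^s(\V_R^{\times n}) \longrightarrow \Gamma^{(s_1,\dots,s_n)}(\V_R^{\times n})\;,$$
which is the identity on objects and sends $\Id_M$ to $e^M_{(s_1,\dots,s_n)}$ (the identity of $M$ in the target). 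Precomposition by $\pi_{(s_1,\dots,s_n)}$ yields an exact, fully faithful embedding $\P_{s_1,\dots,s_n,R}(n)\hookrightarrow\P_{s,R}(n)$, realising the former as a full abelian subcategory and settling the first claim.

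For the decomposition, given $F\in\P_{s,R}(n)$, set $F_{(s_1,\dots,s_n)}(M):=F(e^M_{(s_1,\dots,s_n)})\,F(M)$. The orthogonality of the idempotents provides a functorial splitting $F(M)=\bigoplus F_{(s_1,\dots,s_n)}(M)$; the composition compatibility then ensures that a morphism of type $(t_1,\dots,t_n)$ acts as zero from $F_{(s_1,\dots,s_n)}(M)$ to $F_{(t_1,\dots,t_n)}(N)$ when the tuples differ. Hence each $F_{(s_1,\dots,s_n)}$ factors through $\pi_{(s_1,\dots,s_n)}$, producing the decomposition $\P_{s,R}(n) = \bigoplus \P_{s_1,\dots,s_n,R}(n)$.

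The main obstacle — and the only non-formal input — is the strong composition-compatibility used above, namely the annihilation of cross-type compositions. This is the content of the parenthetical \emph{compatible with composition} of \eqref{eq-decomposition}, and it requires unpacking the commutative square defining composition in $\Gamma^s(\V_R^{\times n})$: inside $(\bigoplus_i\hom_R(M_i,M_i))^{\otimes s}$, pointwise composition $B\otimes A \to C$ is forced to respect coordinate indices, which translates into preservation of the multigrading in the matched case and vanishing in the mismatched case.
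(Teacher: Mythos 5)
Your argument is correct and is exactly the paper's (implicit) argument spelled out in detail: the paper derives the lemma directly from the composition-compatible decomposition \eqref{eq-decomposition}, invoking the analogy with modules over a ring with central orthogonal idempotents, and your proof makes precise the idempotents $e^M_{(s_1,\dots,s_n)}$, their orthogonality, and the resulting splitting of functors. No gaps; the only non-formal point (vanishing of cross-type compositions) is correctly identified and justified.
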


If we partially evaluate a strict polynomial 
$n$-functor $F\in\P_{s_1,\dots, s_n,R}(n)$ on a family
$(M_1,\dots,M_{i-1},M_{i+1},\dots,M_n)\in \V_R^{\times n-1}$, we get a
homogeneous strict polynomial functor of weight $s_i$ of the remaining variable.
In particular, we obtain the following result.

\begin{proposition}\label{prop-degree}
Let $F\in\P_{s,R}$ be a nonzero $p$-primary functor of degree $d$. There exists $d$ nonnegative integers $r_i$ such that $\sum_{i=1}^{d} p^{r_i}= s$.
\end{proposition}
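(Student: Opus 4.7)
The plan is to reduce to the classification of additive strict polynomial functors (Proposition \ref{prop-classif-add}) by analyzing the lift of the $d$-th cross effect to the category of strict polynomial multi-functors $\P_{s,R}(d)$. Since $\deg(F)=d$, the ordinary functor $\Cr_d F$ is nonzero and additive in each of its $d$ arguments. By the commutative square preceding the proposition, this lifts to a nonzero element $\Cr_d F \in \P_{s,R}(d)$; and since each value $\Cr_d F(M_1,\dots,M_d)$ is a direct summand of $F(M_1\oplus\cdots\oplus M_d)$, this multi-functor is $p$-primary.

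By Lemma \ref{lm-decomp}, I can decompose $\Cr_d F$ as a direct sum of components in $\P_{s_1,\dots,s_d,R}(d)$ indexed by $d$-tuples $(s_1,\dots,s_d)$ of nonnegative integers summing to $s$. Pick any nonzero summand $G$ of this decomposition. I claim all $s_i$ are positive: if $s_i=0$ then morphisms in $\Gamma^{(s_1,\dots,s_d)}(\V_R^{\times d})$ ignore the $i$-th factor (as $M_i^{\otimes 0}=R$), so $G$ does not depend on its $i$-th variable. But $G$ inherits reducedness from $\Cr_d F$, so it must vanish when $M_i=0$, forcing $G=0$, a contradiction.

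Next, choose $M_2,\dots,M_d\in\V_R$ such that the partial evaluation $G_1(-) := G(-,M_2,\dots,M_d)$ is nonzero. As explained just before the proposition, $G_1$ belongs to $\P_{s_1,R}$; it is additive (inherited from additivity of $\Cr_d F$ in the first variable via the canonical weight decomposition) and $p$-primary (as a summand of the values of $\Cr_d F$). Proposition \ref{prop-classif-add} then forces $s_1$ to be a power of $p$: either $s_1=1=p^0$ by part (1), or $s_1=q^{r_1}$ for some prime $q$ and $r_1\geq 1$ by part (2); in the latter situation the $p$-primary assumption combined with the last sentence of part (2) (stating that nonzero values consist only of $q$-torsion) forces $q=p$. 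Hence $s_1=p^{r_1}$ for some $r_1\geq 0$. By symmetry, every $s_i$ equals $p^{r_i}$ for some $r_i\geq 0$, and summing gives $\sum_{i=1}^d p^{r_i}=\sum s_i=s$.

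The delicate points that the proof must handle carefully are the inheritance of additivity and $p$-primariness by the summand $G$, and the exclusion of zero weights $s_i$; both follow from reducedness of cross effects and from the compatibility of the weight decomposition of Lemma \ref{lm-decomp} with partial evaluation, but deserve explicit justification.
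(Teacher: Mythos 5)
Your proof is correct and follows essentially the same route as the paper: lift $\Cr_d F$ to $\P_{s,R}(d)$, take a nonzero homogeneous summand $G$ of weight $(s_1,\dots,s_d)$, and apply the classification of additive strict polynomial functors (Proposition \ref{prop-classif-add}) to its partial evaluations. The extra details you supply — ruling out $s_i=0$ via reducedness, and using $p$-primariness to exclude other primes $q$ — are points the paper's proof leaves implicit, and they are handled correctly.
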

\begin{proof}
Since $F$ has degree $d$, its $d$-th cross effect is a nonzero element of $\P_{s,R}(d)$. Let $G$ be a nonzero homogeneous summand of  weight $(s_1,\dots,s_d)$ of
$\Cr_{d} F$. Then $G$ is additive with respect to each of his variables (indeed $\Cr_{d} F$ is so because $\Cr_{d+1} F=0$). By the classification of additive functors in proposition \ref{prop-classif-add}, this implies that each $s_i$ has the form $p^{r_i}$ for some nonnegative $r_i$, whence the result.
\end{proof}

The following elementary lemma, together with proposition \ref{prop-degree}, provides a proof of the first statement of theorem \ref{thm-princ}. 

\begin{lemma}\label{lm-carries}
Let $d$ be a positive integer. Then $d\in\INT(p,s)$ if and only if there exists $d$ nonnegative integers $r_i$ such that $\sum_{i=1}^{d} p^{r_i}= s$.
\end{lemma}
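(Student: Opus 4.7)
The plan is to prove the equivalence combinatorially, by working with representations of $s$ as unordered sums of powers of $p$ and analyzing the elementary move that replaces $p$ copies of $p^j$ by a single $p^{j+1}$ (and its inverse, a ``split'').

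For the ``only if'' direction, suppose $s=\sum_{i=1}^d p^{r_i}$. Since each summand is at least $1$ we get $d\le s$, and since each $p^{r_i}\equiv 1\pmod{p-1}$ we get $d\equiv s\pmod{p-1}$. To obtain $d\ge\DIGp(s)$, group equal exponents to write $s=\sum_{j\ge 0}c_j p^j$ with $c_j\ge 0$ and $\sum c_j=d$. Whenever some $c_j\ge p$, we replace $c_j$ by $c_j-p$ and $c_{j+1}$ by $c_{j+1}+1$: this preserves $s$, keeps the coefficients nonnegative, and decreases the total count by $p-1$. After finitely many such moves all coefficients lie in $\{0,\dots,p-1\}$, so they are the $p$-adic digits of $s$ and the count equals $\DIGp(s)$. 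In particular $d-\DIGp(s)$ is a nonnegative multiple of $p-1$, which gives $d\ge\DIGp(s)$ (and also re-derives $d\equiv\DIGp(s)\equiv s\pmod{p-1}$).

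For the ``if'' direction, start with the $p$-adic expansion $s=\sum a_j p^j$ of total length $\DIGp(s)$ and apply the reverse move: pick some $j\ge 1$ with a positive coefficient and replace one copy of $p^j$ by $p$ copies of $p^{j-1}$. This preserves $s$ and increases the total count by exactly $p-1$. Since $d\equiv\DIGp(s)\pmod{p-1}$ and $d\ge\DIGp(s)$, we need to perform $(d-\DIGp(s))/(p-1)$ such splits to reach count $d$. The only thing to check is that we never get stuck: if at some intermediate stage the current count $c$ is strictly less than $d$, then some coefficient at an exponent $j\ge 1$ must be positive, because otherwise every term is $p^0=1$ and hence $c=s\ge d$, contradicting $c<d$.

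The ``main obstacle'' is really only bookkeeping: verifying that the splitting process in the ``if'' direction does not terminate prematurely, which is precisely where the hypothesis $d\le s$ is used. Once this is observed, both directions reduce to the two elementary facts that the combining move decreases the count by $p-1$ and the splitting move increases it by $p-1$.
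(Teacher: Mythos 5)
Your proof is correct and follows essentially the same route as the paper: the ``if'' direction is the identical splitting of one $p^{r_i}$ into $p$ copies of $p^{r_i-1}$, and your combining-move argument for $d\ge\DIGp(s)$ is just a self-contained derivation of the carries identity $\DIGp(x)+\DIGp(y)=\DIGp(x+y)+c(p-1)$ that the paper cites. No gaps.
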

\begin{proof}
If $x$ and $y$ are positive integers, we have $\DIGp(x+y) + c(p-1) = \DIGp(x)+\DIGp(y) $, where $c$ is the number of carries which appear when we add $x$ and $y$, using $p$-adic expansions. 
In particular, if $s=\sum_{i=1}^d p^{r_i}$, there exists a nonnegative integer $c'$ such that 
$\DIGp(s)+c'(p-1)=d$. Thus $d\in\INT(p,s)$.
Conversely, we prove by induction on $k$ that for all $\DIGp(s)+k(p-1)\in \INT(p,s)$, there exists a decomposition of $s$ as a sum of $\DIGp(s)+k(p-1)$ powers of $p$. The decomposition for $k=0$ is given by the $p$-adic decomposition of $s$. If $\DIGp(s)+k(p-1)<s$ and $s=\sum p^{r_i}$ for $\DIGp(s)+k(p-1)$ nonnegative integers $r_i$, then one of the $r_i$ is strictly positive, and by replacing $p^{r_i}$ by $p$ terms $p^{r_i-1}$, we obtain that the property is valid for $k+1$.
\end{proof}

\subsection{The integral torsion of strict polynomial functors}
In this section, we prove theorem \ref{thm-princ}(2) and proposition \ref{prop-amusette}.
A \emph{composition} of $s$ into $n$ parts is a $n$-tuple of positive integers $\lambda=(\lambda_1,\dots,\lambda_n)$ with $\sum\lambda_i=s$.
We denote by $\Comp(s,n)$ the finite set of all compositions of $s$ into $n$ parts. The set $\Comp(s,n)$ is empty if and only if $n>s$. 
If $n\le s$, we denote by $\M(s,n)$ the greatest common denominator of the set of the multinomials $\binom{s}{\lambda_1,\dots,\lambda_n}$, for all compositions $(\lambda_1,\dots,\lambda_n)$ of $s$ into $n$ parts:
$$\M(s,n)= g.c.d.\left\{ \left.\binom{s}{\lambda_1,\dots,\lambda_n} \quad\right|\quad(\lambda_1,\dots,\lambda_n)\in\Comp(s,n)  \right\}\;.$$
\begin{definition}
Let $R$ be a commutative ring. Let $d$ and $s$ be two positive integers satisfying $d<s$. 
We denote by $C_{s,d+1}\in\P_{s,R}$ the cokernel of the map induced by the multiplication of th divided poser algebra
$$\bigoplus_{\lambda\in\Comp(s,d+1)} \Gamma^\lambda\xrightarrow[]{\;\sum \mathrm{mult}}\Gamma^s\;. $$
\end{definition}

\begin{lemma}
The functor $C_{s,d+1}\in \P_{s,R}$ takes values in torsion abelian groups. Its integral torsion is bounded by $\M(s,d+1)$. 
\end{lemma}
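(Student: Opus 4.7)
The plan is to reduce to free modules and exhibit, for each basis element of $\Gamma^s(R^n)$, a preimage of its $\M(s,d+1)$-multiple under the multiplication map. Since any $M\in\V_R$ is a retract of some $R^n$, $C_{s,d+1}(M)$ is a retract of $C_{s,d+1}(R^n)$, so it suffices to show that $\M(s,d+1)$ annihilates $C_{s,d+1}(R^n)$, or equivalently that $\M(s,d+1)\cdot \Gamma^s(R^n)$ lies in the image of the multiplication map $\bigoplus_{\lambda\in\Comp(s,d+1)}\Gamma^\lambda(R^n)\to \Gamma^s(R^n)$.

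By iterating lemma \ref{lm-prop-Gamma}(2), $\Gamma^s(R^n)$ is a free $R$-module with an explicit basis $B$ indexed by the $n$-tuples $\alpha=(a_1,\dots,a_n)\in\mathbb{N}^n$ of sum $s$, whose elements are the products of divided powers of the canonical basis vectors of $R^n$; call them $x_\alpha$. Fix such a basis vector $x_\alpha$ and a composition $\mu=(\mu_1,\dots,\mu_{d+1})\in\Comp(s,d+1)$. The key claim is that $\binom{s}{\mu_1,\dots,\mu_{d+1}}\cdot x_\alpha$ lies in the image of $\Gamma^{\mu_1}(R^n)\otimes\cdots\otimes\Gamma^{\mu_{d+1}}(R^n)\to\Gamma^s(R^n)$. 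Granting this, an integer Bezout relation expressing $\M(s,d+1)$ as a $\mathbb{Z}$-linear combination of the $\binom{s}{\mu}$'s yields $\M(s,d+1)\cdot x_\alpha$ in the image of $\bigoplus_\lambda \Gamma^\lambda(R^n)\to \Gamma^s(R^n)$, which finishes the argument since $B$ spans $\Gamma^s(R^n)$.

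To prove the claim, I would construct an explicit preimage. For every nonnegative integer matrix $\beta=(\beta_i^{(j)})_{i\le n,\,j\le d+1}$ with row sums $a_i$ and column sums $\mu_j$, the $j$-th column defines a basis vector $x_{\beta^{(j)}}$ of $\Gamma^{\mu_j}(R^n)$. A coordinate-wise application of lemma \ref{lm-prop-Gamma}(1) shows that the multiplication map sends $x_{\beta^{(1)}}\otimes\cdots\otimes x_{\beta^{(d+1)}}$ to $\prod_{i=1}^n \binom{a_i}{\beta_i^{(1)},\dots,\beta_i^{(d+1)}}\cdot x_\alpha$. Summing these preimages over all such matrices $\beta$ and using the combinatorial identity
\[
\sum_\beta \prod_{i=1}^n\binom{a_i}{\beta_i^{(1)},\dots,\beta_i^{(d+1)}}=\binom{s}{\mu_1,\dots,\mu_{d+1}},
\]
which enumerates ordered partitions of an $s$-element set into blocks of sizes $\mu_j$ by refining a pre-existing partition into blocks of sizes $a_i$, produces a preimage of $\binom{s}{\mu_1,\dots,\mu_{d+1}}\cdot x_\alpha$.

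The main obstacle is combinatorial bookkeeping: tracking the multinomial coefficients produced by iterated multiplication in the divided power algebra and verifying the identity above by a double-counting argument. Once this is in hand, the Bezout step and the reduction from an arbitrary projective $M\in\V_R$ to the free case $R^n$ are formal.
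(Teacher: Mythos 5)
Your argument is correct, but it takes a more computational route than the paper. The paper's proof is essentially two lines: by Lemma~\ref{lm-prop-Gamma}(1), the composite $\Gamma^s\hookrightarrow\Gamma^{\lambda}\xrightarrow{\mathrm{mult}}\Gamma^s$ \emph{is} multiplication by $n_\lambda=\binom{s}{\lambda_1,\dots,\lambda_{d+1}}$, so for every $x\in\Gamma^s(M)$ the image of $x$ under the canonical inclusion into $\Gamma^\lambda(M)$ is already a preimage of $n_\lambda x$ under the multiplication map --- functorially in $M$, with no basis and no reduction to free modules. Hence each $n_\lambda$ kills $C_{s,d+1}$, and one concludes with the gcd exactly as you do. You invoke Lemma~\ref{lm-prop-Gamma}(1) only ``coordinate-wise'' to compute products of divided-power basis elements, and then rebuild the global factorization by hand: explicit basis of $\Gamma^s(R^n)$, sum over integer matrices with prescribed row and column sums, the Vandermonde-type multinomial identity, and finally a retraction argument to pass from $R^n$ to arbitrary $M\in\V_R$. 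All of these steps are valid (the product formula, the double-counting identity, and the retract reduction are each correct), so the proof goes through; what it costs is the combinatorial bookkeeping you yourself flag as the main obstacle, all of which is avoided by reading Lemma~\ref{lm-prop-Gamma}(1) as a statement about morphisms of functors rather than about basis elements.
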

\begin{proof}
Let $\lambda=(\lambda_1,\dots,\lambda_{d+1})$ be a composition of $s$, and let $n_\lambda:=\binom{s}{\lambda_1,\dots,\lambda_{d+1}}$.
Then the following composite is zero.
$$\Gamma^p\xrightarrow[]{\times n_\lambda} \Gamma^p\twoheadrightarrow C_{s,d+1}\;\qquad (*)$$
Indeed, by lemma \ref{lm-prop-Gamma}, multiplication by $n_\lambda$ is equal to the composite
$\Gamma^s\hookrightarrow \Gamma^\lambda\xrightarrow[]{\mathrm{mult}}\Gamma^s$, and by definition of $C_{s,d+1}$, the composite $\Gamma^\lambda\to\Gamma^s\twoheadrightarrow C_{s,d+1}$ equals zero. Since $(*)$ is zero, the torsion of $C_{s,d+1}$ is bounded by $n_\lambda$, for all composition $\lambda$. 
Hence the torsion of  $C_{s,d+1}$ is bounded by $\M(s,d+1)$.
\end{proof}

Our next task is to give an explicit computation of $\M(s,n)$.
Given a nonnegative integer $x$, we denote by $N_p(x)$ the exponent of the
prime $p$ in the prime decomposition of $x$. In particular we have 
$$N_p(\M(s,n))=\min\left\{\left. N_p\left(\binom{s}{\lambda_1,\dots,\lambda_n}\right)\;\right|\lambda\in\Comp\big((s),n\big)  \right\}\;. $$

\begin{lemma}\label{lm-bound} Let $s,n$ be positive integers, satisfying $n\le s$.
$$N_p(\M(s,n))=\left\{\begin{array}{ll}0  &\text{if $n\le \DIGp(s)$,}\\ \left\lceil\frac{n-\DIGp(s)}{p-1}\right\rceil & \text{ if $n> \DIGp(s)$,} \end{array}\right.$$ 
where the brackets denote the ceiling function.
\end{lemma}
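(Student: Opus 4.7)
The strategy is to compute $N_p\bigl(\binom{s}{\lambda_1,\dots,\lambda_n}\bigr)$ in closed form via Legendre's formula, then minimize over $\lambda\in\Comp(s,n)$. Recall that $N_p(k!)=(k-\DIGp(k))/(p-1)$; applied to the multinomial this gives
\begin{equation*}
N_p\binom{s}{\lambda_1,\dots,\lambda_n}=\frac{1}{p-1}\Bigl(\sum_{i=1}^n \DIGp(\lambda_i)-\DIGp(s)\Bigr),
\end{equation*}
using that $\sum_i\lambda_i=s$. The problem therefore reduces to minimizing $\sum_i\DIGp(\lambda_i)$ over $\lambda\in\Comp(s,n)$.

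For the lower bound I would rely on three elementary facts. First, $\DIGp(\lambda_i)\geq 1$ for every positive $\lambda_i$, hence $\sum_i\DIGp(\lambda_i)\geq n$. Second, the subadditivity of the digit sum (an immediate consequence of the carry formula already recalled in the proof of Lemma~\ref{lm-carries}) yields $\sum_i\DIGp(\lambda_i)\geq \DIGp(s)$. Third, the same identity forces $\sum_i\DIGp(\lambda_i)\equiv s\equiv \DIGp(s)\pmod{p-1}$. Combining these: if $n\leq\DIGp(s)$ the minimum of $\sum_i\DIGp(\lambda_i)$ is at least $\DIGp(s)$; if $n>\DIGp(s)$ it is at least the smallest integer which is both $\geq n$ and congruent to $\DIGp(s)$ modulo $p-1$, i.e.\ $\DIGp(s)+(p-1)\bigl\lceil(n-\DIGp(s))/(p-1)\bigr\rceil$.

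For the upper bound I would exhibit explicit compositions saturating each bound. If $n\leq\DIGp(s)$, take the $p$-adic expansion $s=\sum_l a_l p^l$, which realises $s$ as a sum of $\DIGp(s)$ powers of $p$ in which every level appears at most $p-1$ times, and regroup these powers into $n$ nonempty subsets. In any subset no carries occur, so $\DIGp(\lambda_i)$ equals the cardinality of the subset and $\sum_i\DIGp(\lambda_i)=\DIGp(s)$. If $n>\DIGp(s)$, set $m=\DIGp(s)+(p-1)\lceil(n-\DIGp(s))/(p-1)\rceil$ and check that $m\leq s$: indeed $m-n<p-1$, while $m-s$ is a multiple of $p-1$, so $m\leq s$. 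Hence $m\in\INT(p,s)$ and Lemma~\ref{lm-carries} writes $s$ as a sum of $m$ powers of $p$. Distribute these powers into $n-1$ singleton groups and one group of the remaining $m-n+1$ powers; since $m-n+1\leq p-1$, again by $\lceil x\rceil<x+1$, that group contains at most $p-1$ summands, so at every level at most $p-1$ copies of a power of $p$ occur, no carries arise in forming the group's sum, and its digit sum equals its cardinality. The resulting composition satisfies $\sum_i\DIGp(\lambda_i)=m$, achieving the bound.

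The main obstacle is the case $n>\DIGp(s)$: one must simultaneously verify that the target value $m$ lies in $\INT(p,s)$ (so that Lemma~\ref{lm-carries} supplies a decomposition of $s$ into $m$ powers of $p$) and that the single nonsingleton group has at most $p-1$ elements (so no carries spoil the digit-sum accounting). Both statements reduce to the inequality $\lceil x\rceil<x+1$ combined with the congruence $m\equiv s\pmod{p-1}$, but together they are the quantitative heart of the construction.
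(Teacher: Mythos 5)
Your proof is correct, and it takes a genuinely different route from the paper's. The paper sandwiches $N_p(\M(s,n))$ by an induction on $n$: it first establishes monotonicity in $n$, computes the two extremal values $N_p(\M(s,\DIGp(s)))=0$ (Kummer) and $N_p(\M(s,s))=(s-\DIGp(s))/(p-1)$ (Legendre), then bounds the increments $N_p(\M(s,n+1))-N_p(\M(s,n))$ by $0$ or $1$ according to whether $n\equiv\DIGp(s)\pmod{p-1}$, and finally uses the total increase between the two extremes to force every increment inequality to be an equality. You instead apply Legendre's formula once to get the closed form
$$N_p\binom{s}{\lambda}=\frac{1}{p-1}\Bigl(\sum_i\DIGp(\lambda_i)-\DIGp(s)\Bigr)$$
and turn the lemma into an explicit minimization of $\sum_i\DIGp(\lambda_i)$, with the lower bound coming from the three constraints (each term $\geq 1$, subadditivity, congruence mod $p-1$) and the upper bound from an explicitly constructed optimal composition; your verification that $m\leq s$ and that the one non-singleton block has at most $p-1$ summands (hence contributes no carries) is exactly the point that needs care, and you handle it correctly. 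Your argument is more direct and self-contained -- it identifies the minimizing compositions rather than only the minimum value -- while the paper's induction avoids any explicit construction in the range $\DIGp(s)<n<s$ at the cost of the global counting step that converts the inequalities \eqref{eq-5} into equalities. Both proofs invoke Lemma \ref{lm-carries} (you for producing the decomposition of $s$ into $m$ powers of $p$, the paper for detecting a part that is not a power of $p$), so neither is logically lighter; yours is arguably the cleaner read.
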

\begin{proof}[Proof of lemma \ref{lm-bound}]
We first observe that if $n\le m$, then 
\begin{equation}N_p(\M(s,n))\le N_p(\M(s,m)) \label{eq-1}\end{equation}
Indeed, we can assume $m=n+1$. Let $\lambda=(\lambda_1,\dots,\lambda_{n+1})$ be a composition of $s$ such that $N_p(\M(s,n+1))=N_p(\binom{s}{\lambda})$ and let $\lambda'=(\lambda_1,\dots,\lambda_n+\lambda_{n+1})$. Then  $\binom{s}{\lambda}=\binom{\lambda_n+\lambda_{n+1}}{\lambda_n,\lambda_{n+1}}\binom{s}{\lambda'}$. So we have  $$N_p(\M(s,n))\le N_p(\binom{s}{\lambda'})\le N_p(\binom{s}{\lambda})=N_p(\M(s,n+1))\;.$$

Let $s=s_kp^k+\dots+s_1p+s_0$ be the $p$-adic expansion of $s$. By Kummer's theorem \cite[p. 32]{PRIME}, the multinomial associated to the composition $(p^k,\dots,p^k,\dots,p^0,\dots, p^0)$, where each $p^i$ has multiplicity $s_i$, is prime to $p$. Thus, $N_p(\tau(s,\DIGp(s)))=0$. Hence the inequation \eqref{eq-1} implies:
\begin{equation}N_p(\M(s,n))=0\quad \text{ if $1\le n\le \DIGp(s)$}\label{eq-2}\;. 
\end{equation}
There is only one composition on $s$ into $s$ parts, namely $(1,\dots,1)$. So by Legendre's theorem \cite[p. 32]{PRIME}, we have
\begin{equation}N_p(\M(s,s))=\frac{s-\DIGp(s)}{p-1}\label{eq-3}\;.\end{equation}

We have determined $N_p(\M(s,n))$ for extremal values of $n$. We are now going to determine how $N_p(\M(s,n))$ grows with $n$. Let $(\lambda_1,\dots,\lambda_n)$ be a composition of $s$ such that $N_p(\M(s,n))=N_p(\binom{s}{\lambda})$. We distinguish two cases.
{\bf First case:} we assume that $n\ne \DIGp(s)\mod (p-1)$. Then by lemma \ref{lm-carries}, one of the $\lambda_i$ is not a power of $p$. For such a $\lambda_i$, one can find positive integers $k,\ell$ such that $k+\ell=\lambda_i$ and $N_p(\binom{\lambda_i}{k,\ell})=0$ (use the $p$-adic expansion of $\lambda_i$ in a fashion similar to the proof of equation \eqref{eq-2}). Thus if $\lambda'=(\lambda_1,\dots,\lambda_{i-1},k,\ell,\lambda_{i+1},\dots,\lambda_n)$, we have 
$$N_p(\M(s,n))=N_p(\binom{s}{\lambda})=N_p(\binom{s}{\lambda'})\ge N_p(\M(s,n+1))\;.$$
Hence the inequation \eqref{eq-1} implies that
\begin{equation}
N_p(\M(s,n))=N_p(\M(s,n+1))\quad \text{ if $n- \DIGp(s)\ne 0\mod (p-1)$}\label{eq-4}\;. 
\end{equation}
{\bf Second case:} we assume that $n= \DIGp(s) \mod (p-1)$ with $n<s$. If one of the $\lambda_i$ is not a power of $p$, then we may argue as in the first case to obtain that 
$$N_p(\M(s,n))= N_p(\M(s,n+1))\;.$$
Assume now that all the $\lambda_i$ are powers of $p$. Since $n<s$, one of the $\lambda_i$ is not equal to $1$, i.e. is of the form $\lambda_i=p^r$ with $r>0$. By Kummer's theorem, $N_p(\binom{p^r}{p^{r-1},(p-1)p^{r-1}})$ equals $1$, so if we denote by $\lambda'$ the composition $(\lambda_1,\dots,\lambda_{i-1},p^{r-1},(p-1)p^{r-1},\lambda_{i+1},\dots,\lambda_n)$, we have
$$
N_p(\M(s,n))+1=N_p(\binom{s}{\lambda})+1=N_p(\binom{s}{\lambda'})\ge N_p(\M(s,n+1))
$$
So we have
\begin{equation}
N_p(\M(s,n+1))\le N_p(\M(s,n))+1 \quad \text{ if $n- \DIGp(s)= 0\mod (p-1)$}\label{eq-5}\;.
\end{equation}

By equations \eqref{eq-2} and \eqref{eq-3}, the difference $N_p(\M(s,s))-N_p(\DIGp(s))$ is equal to $(s-\DIGp(s))/(p-1)$. In view of \eqref{eq-4}, this implies that the inequation \eqref{eq-5} must be an equality. Lemma \ref{lm-bound} follows by induction on $n$. 
\end{proof}

The following proposition provides a proof of proposition \ref{prop-amusette} and of theorem \ref{thm-princ}(2). 

\begin{proposition}
Let $F\in\P_{s,R}$ be a strict polynomial functor of degree $d<s$. 
Then the values of $F$, considered as abelian groups, are torsion groups.
Moreover the torsion of $F$ is bounded by $\M(s,d+1)$. In particular, if $F$
takes values in $p$-primary abelian groups, the torsion of $F$ is bounded by
$p^r$, with $r=\lceil\frac{d+1-\DIGp(s)}{p-1}\rceil$.
\end{proposition}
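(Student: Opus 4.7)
The plan is a Yoneda/vanishing argument that transfers the torsion bound of $C_{s,d+1}$, established in the previous lemma for values at $R$, to arbitrary values $F(M)$. Fix $M\in\V_R$. By the Yoneda lemma, $F(M)=\hom_{\P_{s,R}}(\Gamma^{s,M},F)$, and scalar multiplication by an integer $n$ on $F(M)$ corresponds to pre-composition with the morphism $n\cdot\Id_{\Gamma^{s,M}}$. Consequently, to prove $\M(s,d+1)\cdot F(M)=0$ it is enough to show that each multinomial $n_\lambda=\binom{s}{\lambda_1,\dots,\lambda_{d+1}}$, for $\lambda=(\lambda_1,\dots,\lambda_{d+1})\in\Comp(s,d+1)$, annihilates $\hom_{\P_{s,R}}(\Gamma^{s,M},F)$, since $\M(s,d+1)$ is the greatest common divisor of these multinomials.

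To see that $n_\lambda$ kills this hom group, I would apply lemma \ref{lm-prop-Gamma}(1) to the projective module $\hom_R(M,N)$ and let $N$ vary; this shows that $n_\lambda\cdot \Id_{\Gamma^{s,M}}$ factors in $\P_{s,R}$ as
$$\Gamma^{s,M}\hookrightarrow \Gamma^{\lambda_1,M}\otimes\cdots\otimes\Gamma^{\lambda_{d+1},M}\xrightarrow{\mathrm{mult}}\Gamma^{s,M}.$$
It therefore suffices to check that $\hom_{\P_{s,R}}(\Gamma^{\lambda_1,M}\otimes\cdots\otimes\Gamma^{\lambda_{d+1},M},F)=0$, and by faithfulness of the forgetful functor (lemma \ref{prop-oubli}) I would verify this in $\F_R$ instead. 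There each factor $\Gamma^{\lambda_i,M}$ is a reduced ordinary functor (it sends $0$ to $\Gamma^{\lambda_i}(0)=0$, because $\lambda_i\ge 1$), and the underlying ordinary functor of $F$ is polynomial of degree $d$; the vanishing lemma \ref{lm-vanish} then forces the hom group to vanish.

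This yields $\M(s,d+1)\cdot F(M)=0$ for every $M\in\V_R$, which in particular proves proposition \ref{prop-amusette} since $\M(s,d+1)\ge 1$. For the $p$-primary refinement, when $F(M)$ consists of $p$-power torsion elements only the $p$-part of $\M(s,d+1)$ survives, giving the bound $p^{N_p(\M(s,d+1))}$; lemma \ref{lm-bound} identifies this exponent with $\lceil(d+1-\DIGp(s))/(p-1)\rceil$. The only delicate step is checking that the displayed factorization of $n_\lambda\cdot\Id$ really lives in $\P_{s,R}$ (and not merely in $\F_R$), but this is a formal consequence of the naturality of lemma \ref{lm-prop-Gamma}(1) in the projective module to which the divided powers are applied.
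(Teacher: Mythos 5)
Your proof is correct and follows essentially the same route as the paper's: both arguments rest on the Yoneda lemma, the factorization of multiplication by $\binom{s}{\lambda_1,\dots,\lambda_{d+1}}$ through $\Gamma^{\lambda_1}\otimes\dots\otimes\Gamma^{\lambda_{d+1}}$ given by lemma \ref{lm-prop-Gamma}(1), the vanishing lemma \ref{lm-vanish} combined with faithfulness of the forgetful functor, and lemma \ref{lm-bound} for the $p$-primary refinement. The only (harmless) difference is one of packaging: the paper first reduces to evaluation at $R$ via the parameterization $F_M$ and channels the argument through the auxiliary cokernel functor $C_{s,d+1}$, whereas you work with the representable $\Gamma^{s,M}$ directly and dispense with $C_{s,d+1}$.
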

\begin{proof}
Given functor $F\in\P_{s,R}$, the functor $F_M$ defined by $F_M(N)= F(M\otimes N)$ is also an object of $\P_{s,R}$, with the same degree as $F$. And moreover $F_M(R)=F(M)$. So it actually suffices to prove that for all functors $F\in\P_{s,R}$, $F(R)$ is a torsion abelian group, with torsion bounded by $\M(s,d+1)$.
By the Yoneda lemma, for all $x\in F(R)$ we can find a morphism $f_x:\Gamma^s\to F$ such that $x$ lies in the image of $f_x$. Since $F$ has degree $d$, the composite
$$\bigoplus_{\lambda\in\Comp(s,d+1)} \Gamma^\lambda\xrightarrow[]{\;\sum \mathrm{mult}}\Gamma^s\xrightarrow[]{f_x} F $$
is zero, hence $f_x$ factors into a morphism $\overline{f}_x:C_{s,d+1}\to F$ which contains $x$ in its image. Since the torsion of $C_{s,d+1}$ is bounded by $\M(s,d+1)$, we obtain that $\M(s,d+1)x=0$. This holds for all $x\in F(R)$ so the proposition follows.
\end{proof}

\section{Applications to the integral torsion in homology}\label{sec-4}

\subsection{Functor categories and parameterization}\label{subsec-param}
We fix a commutative ring $R$ and we denote by $\Fu$ one of functor categories $\FF_R$ or $\P_{s,R}$.
Given $F\in\Fu$ and $M\in \V_R$, we define the parameterized functors $F_M$ and $F^M$ by: 
$$F_M:N\mapsto F(M\otimes_R N)\;,\qquad F^M:N\mapsto F(\hom_R(M, N))\;.$$
Parameterization yields exact functors $-^M,-_M: \Fu\to \Fu$, which are adjoint on both sides. 
By taking degreewise parameterization of chain complexes in $\Fu$, we get endofunctors of the (unbounded) derived category, which are adjoint on both sides:
$$-^M,-_M:\DD(\Fu)\to \DD(\Fu) \;.$$
We will work in the following framework.

\begin{framework}\label{frame}
We consider an additive functor
$$\Psi:\C\to R\text{-Mod}\;.$$
where $\C$ is a full additive subcategory of $\DD(\Fu)$, which is idempotent complete (i.e. stable by direct summand) and stable by parameterization. That is, for all $M\in\V_R$, the parameterization functor $-_M$ restricts to $-_M:\C\to \C$. 
\end{framework}

Our purpose is to obtain some information on the integral torsion of the values of $\Psi$.
To this purpose, we study the functors $\Psi_C\in \Fu$, defined for all $C\in \C$ by 
$$\Psi_C(M):=\Psi(C_M)\;.$$
In the remainder of section \ref{subsec-param}, we study of the degree of the functors $\Psi_C$.

The definition of degree extends to complexes of functors. To be more specific, the $n$-th cross effect of a complex of functors is defined by taking $n$-th cross effects degreewise. This yields an exact functor (where $\Fu(n)$ is the category of functors with $n$ variables)
$$\Cr_n:\mathbf{Ch}(\Fu)\to \mathbf{Ch}(\Fu(n))$$
and also a functor between the corresponding derived categories. A complex $C$ has degree less than $d$ if $\Cr_d C\simeq 0$ in $\DD(\Fu(d))$ or equivalently if all its homology groups $H_i(C)$ are functors of degree less than $d$ in the sense of definition \ref{def-deg}.

Given a complex of functors $C$ and $M_1,\dots,M_n\in\V_R$, we introduce a complex of functors $C[M_1,\dots,M_n]$:
$$C[M_1,\dots,M_n]:N\mapsto\Cr_nC(M_1\otimes N,\dots,M_n\otimes N)\;.$$
By definition, $C[M_1,\dots,M_n]$ is a direct summand of the parameterized complex $C_{M_1\oplus\dots\oplus M_n}$. In particular, if $C$ is an object of the category $\C$ of framework \ref{frame}, then $C[M_1,\dots,M_n]$ also is. The following result is a straightforward consequence of the additivity of $\Psi$.
\begin{lemma}\label{lm-cross-effects}
Let $C\in\C$. Then for all $M_1,\dots,M_n\in\V_R$, we have:
$$\Cr_n(\Psi_C)(M_1,\dots,M_n)=\Psi(C[M_1,\dots,M_n])\;.$$ 
In particular, if $C$ is polynomial of degree less than $d$, then $\Psi_C$ is polynomial of degree less than $d$.
\end{lemma}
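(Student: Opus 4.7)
The plan is to derive the identity directly from the additivity of $\Psi$ combined with the basic cross-effect decomposition \eqref{eqn-decomp} applied to the functor $C$.

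First I would observe that for any $M_1,\dots,M_n \in \V_R$, the parameterized complex $C_{M_1\oplus\dots\oplus M_n}$ decomposes inside $\C$ as a direct sum indexed by subsets $\{j_1<\dots<j_k\}\subset\{1,\dots,n\}$, with the summand attached to $\{j_1,\dots,j_k\}$ equal to $C[M_{j_1},\dots,M_{j_k}]$ (and the empty subset contributing the constant complex $C_0$). This is obtained by applying \eqref{eqn-decomp} to $C$ with inputs $M_i\otimes N$, using the identity $C((M_1\oplus\dots\oplus M_n)\otimes N) = C((M_1\otimes N)\oplus\dots\oplus(M_n\otimes N))$. Both hypotheses built into framework \ref{frame} are used at this step: stability under parameterization guarantees that each $C_{M_{j_1}\oplus\cdots\oplus M_{j_k}}$ lies in $\C$, and idempotent completeness guarantees that the direct summands $C[M_{j_1},\dots,M_{j_k}]$ do as well.

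Next, applying the additive functor $\Psi$ term by term yields a direct sum decomposition of $\Psi_C(M_1\oplus\dots\oplus M_n) = \Psi(C_{M_1\oplus\dots\oplus M_n})$ with a summand $\Psi(C[M_{j_1},\dots,M_{j_k}])$ attached to each subset. Comparing this, term by term, with the decomposition \eqref{eqn-decomp} for the ordinary functor $\Psi_C$ itself, and arguing by induction on $n$ to match pieces (the inductive definition of $\Cr_n$ allows one to identify summands unambiguously from those at lower $n$), I obtain the desired equality $\Cr_n(\Psi_C)(M_1,\dots,M_n) = \Psi(C[M_1,\dots,M_n])$.

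The second statement is then immediate: if $C$ has degree less than $d$, then $\Cr_d C \simeq 0$ in $\DD(\Fu(d))$, hence $C[M_1,\dots,M_d]\simeq 0$ in $\C$ for every choice of $M_i$, hence $\Psi(C[M_1,\dots,M_d]) = 0$ by additivity of $\Psi$, hence $\Cr_d(\Psi_C)=0$ by the first part. I do not foresee any serious obstacle; the only subtle point is the bookkeeping needed to check that the summand identification matches on the nose, which the inductive definition of cross-effects handles cleanly.
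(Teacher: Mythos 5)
Your argument is correct and is precisely the intended one: the paper offers no written proof beyond the remark that the lemma is "a straightforward consequence of the additivity of $\Psi$," and your decomposition of $C_{M_1\oplus\dots\oplus M_n}$ via \eqref{eqn-decomp}, followed by applying $\Psi$ and matching summands inductively against the cross-effect decomposition of $\Psi_C$, is exactly how that consequence is drawn. The points you flag (idempotent completeness and stability under parameterization to keep the summands in $\C$, and acyclicity of $\Cr_d C$ passing to $C[M_1,\dots,M_d]$ in the derived category) are the right ones and are handled correctly.
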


Lemma \ref{lm-cross-effects} gives a condition which ensures that $\Psi_C$, for a given $C\in\C$, is polynomial. 
In concrete cases, we often encounter functors $\Psi$ such that $\Psi_C$ is polynomial of degree less than $d$, for all $C\in\C$. 
Such a global polynomiality condition is linked with the vanishing of $\Psi$ on tensor products.

\begin{proposition}[Global polynomiality, necessary condition]\label{prop-cross-implies-vanish}
Assume that for all $C\in \C$, $\Psi_C$ has degree less than $d$. Then for all family of degreewise reduced complexes $C_k$, ${1\le k\le d}$, such that
$C_1\otimes\dots\otimes C_{d}\in\C$, we have
$$\Psi(C_1\otimes\dots\otimes C_{d})=0\;.$$ 
\end{proposition}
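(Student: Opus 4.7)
The plan is to apply the hypothesis to the single test object $C := C_1 \otimes \dots \otimes C_d$ itself, and then exhibit $C_1 \otimes \dots \otimes C_d$ as a direct summand of a complex on which $\Psi$ is forced to vanish.

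First I would use the hypothesis for this particular $C \in \C$: since $\Psi_C$ has degree less than $d$, its $d$-th cross-effect $\Cr_d \Psi_C$ vanishes, and Lemma \ref{lm-cross-effects} rephrases this as $\Psi(C[M_1, \dots, M_d]) = 0$ for every choice of $M_1, \dots, M_d \in \V_R$. I would then specialize to $M_1 = \dots = M_d = R$, which reduces $C[R, \dots, R](N)$ to $\Cr_d C(N, N, \dots, N)$.

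The heart of the argument is a direct-sum decomposition of $\Cr_d$ applied to a tensor product of reduced functors. Iterating the canonical decomposition \eqref{eqn-decomp} for each $C_k$ (using $C_k(0) = 0$) and tensoring the results, one obtains
$$\Cr_d (C_1 \otimes \dots \otimes C_d)(X_1, \dots, X_d) \cong \bigoplus_{(T_1, \dots, T_d)} \bigotimes_{k=1}^d \Cr_{|T_k|} C_k(X_{T_k}),$$
where the sum is indexed by tuples of nonempty subsets of $\{1, \dots, d\}$ whose union is $\{1, \dots, d\}$. The tuples with every $|T_k| = 1$ correspond bijectively to permutations $\sigma \in \Si_d$, and (using $\Cr_1 C_k = C_k$ since $C_k$ is reduced) they contribute the direct summand $\bigoplus_{\sigma \in \Si_d} C_1(X_{\sigma(1)}) \otimes \dots \otimes C_d(X_{\sigma(d)})$. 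Specializing $X_1 = \dots = X_d = N$, this summand becomes $(C_1 \otimes \dots \otimes C_d)(N)^{\oplus d!}$, so $C_1 \otimes \dots \otimes C_d$ appears as a direct summand of $C[R, \dots, R]$ in $\mathbf{Ch}(\Fu)$, and hence also in $\DD(\Fu)$.

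To conclude, I would invoke the additivity of $\Psi$: it carries direct summands to direct summands, so $\Psi(C_1 \otimes \dots \otimes C_d)$ is a direct summand of the trivial object $\Psi(C[R, \dots, R]) = 0$, and is therefore zero. I do not anticipate any serious obstacle here; the only points that require a careful check are the naturality of the cross-effect decomposition, its compatibility with the diagonal specialization $X_i = N$, and the fact that the whole argument lifts from single functors to chain complexes and to the derived category, all of which are automatic from the degreewise definition of $\Cr_d$ and the exactness of the operations involved.
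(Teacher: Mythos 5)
Your proof is correct and follows essentially the same approach as the paper: both identify $C_1 \otimes \dots \otimes C_d$ as a natural direct summand of $(C_1\otimes\dots\otimes C_d)[R,\dots,R]$ (via the canonical decomposition \eqref{eqn-decomp} applied to the reduced factors), and then conclude by additivity of $\Psi$ together with Lemma \ref{lm-cross-effects}. The paper simply singles out the one summand indexed by the identity permutation rather than writing out the full $\Si_d$-indexed family, but the argument is the same.
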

\begin{proof}
Since the complexes $C_i$ are degreewise reduced, i.e. the complexes of $R$-modules $C_i(0)$ are \emph{equal} to zero, the tensor product $C_1(M_1)\otimes \dots \otimes C_d(M_d)$ is a direct summand of $\Cr_n (C_1\otimes\dots \otimes C_n)(M_1,\dots,M_d)$. So $C_1\otimes\dots \otimes C_d$ is a direct summand of $(C_1\otimes\dots\otimes C_d)[R,\dots,R]$. By lemma \ref{lm-cross-effects}, $\Psi$ vanishes on $(C_1\otimes\dots\otimes C_d)[R,\dots,R]$, hence on $C_1\otimes\dots \otimes C_d$. 
\end{proof}

A converse of proposition \ref{prop-cross-implies-vanish} holds when $\C$ equals $\DD_{\ge 0}(\Fu)$, the full subcategory of $\DD(\Fu)$ of nonnegatively graded chain complexes, and when $\Psi$ is some homology group of a homological functor  $\DD_{\ge 0}(\Fu)\to \DD_{\ge 0}(R\text{-Mod})$ (i.e. an additive functor commuting with suspension and preserving exact triangles).

\begin{proposition}[Global polynomiality, sufficient condition]\label{prop-vanish-implies-cross}
Let $\Phi:\DD_{\ge 0}(\Fu)\to \DD_{\ge 0}(R\text{-}\mathrm{Mod})$ be a homological functor, and let $\Psi_i=H_i\Phi$. Assume that for all $i\le k$,  $\Psi_i$ vanishes on direct sums of $d$-fold tensor products of reduced functors.
Then for all $i\le k$, and all $C\in \DD_{\ge 0}(\Fu)$, $(\Psi_i)_C$ is polynomial of degree less than $d$.
\end{proposition}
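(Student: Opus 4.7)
The plan is to reduce, via the derived-categorical structure of $\Phi$ and a resolution argument, to the hypothesis applied to tensor products of reduced functors. By Lemma \ref{lm-cross-effects}, showing that $(\Psi_i)_C$ has degree strictly less than $d$ amounts to proving $\Psi_i(C[M_1,\dots,M_d]) = 0$ for every $M_1,\dots,M_d \in \V_R$ and every $i \le k$; fix such $i$ and $M_l$. As a first step, I would reduce to the case where $C = F$ is a functor concentrated in degree $0$. For this, I would exploit the truncation triangles $H_n(C)[n] \to \tau_{\le n}C \to \tau_{\le n-1}C$: the operation $(-)[M_1,\dots,M_d]$ is a degreewise direct summand of parameterisation and therefore exact and triangle-preserving, and $\Phi$ preserves triangles by hypothesis, so applying $\Psi_j$ yields a long exact sequence. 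Using the compatibility $\Psi_j(F[n][M_1,\dots,M_d]) = \Psi_{j-n}(F[M_1,\dots,M_d])$, together with $n \ge 0$ ensuring that $j-n \le k$ whenever $j \le k$, an induction on the truncation length (combined with a filtered colimit argument for the unbounded case) reduces the problem to: for every single $F \in \Fu$ and every $j \le k$, $\Psi_j(F[M_1,\dots,M_d]) = 0$.

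Next, I would take a projective resolution $P_\bullet \to F$ with the property that each cross-effect $\Cr_d P_n(M_1 \otimes N,\dots,M_d \otimes N)$, viewed as a functor of $N$, decomposes as a direct sum of $d$-fold tensor products of reduced functors. In $\Fu = \P_{s,R}$ this is provided by the representable projectives $\Gamma^{s,M}$: iterating Lemma \ref{lm-prop-Gamma}(2) yields
\[ \Gamma^{s,M}(X_1 \oplus \dots \oplus X_d) \simeq \bigoplus_{\substack{(\lambda_1,\dots,\lambda_d) \\ \sum \lambda_j = s}} \Gamma^{\lambda_1,M}(X_1) \otimes \dots \otimes \Gamma^{\lambda_d,M}(X_d), \]
and extracting the $\Cr_d$ part selects the tuples with every $\lambda_j \ge 1$, each summand being a tensor product of reduced functors $\Gamma^{\lambda_j,M}$. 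Substituting $X_j = M_j \otimes N$ preserves this decomposition as well as the reducedness in $N$, so $\Cr_d \Gamma^{s,M}(M_1 \otimes N,\dots,M_d \otimes N)$ is a direct sum of $d$-fold tensor products of reduced functors of $N$. The hypothesis therefore gives $\Psi_j = 0$ on each such summand for every $j \le k$; the case $\Fu = \F_R$ is handled by an analogous resolution adapted to ordinary polynomial functors.

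To conclude, I would use the hyperhomology spectral sequence: since $\Cr_d$ and parameterisation are exact, $P_\bullet[M_1,\dots,M_d]$ is a resolution of $F[M_1,\dots,M_d]$, and the spectral sequence of $\Phi$ applied to this resolution has $E_2$-terms of the form $\Psi_j(P_n[M_1,\dots,M_d])$, all vanishing for $j \le k$ by the previous step; hence $\Psi_j(F[M_1,\dots,M_d]) = 0$ in that range. The hard part will be the second step: the clean decomposition of $\Cr_d$ applied to representable projectives as a direct sum of tensor products of reduced functors is delicate, and relies essentially on the divided-power structure in $\P_{s,R}$ (and on an analogous construction in $\F_R$). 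The final step also relies crucially on having the hypothesis available uniformly for every $j \le k$ (not only at $j = i$), so that the connecting morphisms in the truncation long exact sequences and the differentials in the hyperhomology spectral sequence do not re-introduce nontrivial contributions at any stage.
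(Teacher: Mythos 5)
Your proposal is correct and follows essentially the same route as the paper: reduce via Lemma \ref{lm-cross-effects} to proving vanishing of $\Psi_j(C[M_1,\dots,M_d])$ for $j\le k$, resolve by standard projectives whose $d$-th cross-effects decompose (via the exponential formula of Lemma \ref{lm-prop-Gamma}) into direct sums of $d$-fold tensor products of reduced functors, and then invoke the hypothesis for all $j\le k$ together with standard homological bookkeeping. The only differences are presentational: the paper resolves the $d$-variable functor $\Cr_d C$ directly by a Cartan--Eilenberg resolution built from external tensor products of projectives and concludes by explicit dimension shifting, whereas you resolve $F$ in one variable, apply $\Cr_d$, and package the dimension shift as a hyperhomology spectral sequence (your preliminary truncation step and the ``filtered colimit'' remark are unnecessary but harmless, since for fixed $i\le k$ only the truncation $\tau_{\le k}C$ contributes).
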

\begin{proof}
Let us call a \emph{special functor} a tensor product $F_1\otimes\dots\otimes F_d\in\Fu$ where all the $F_i$ are reduced with $R$-projective values, and a \emph{special complex} a complex of direct sums of special functors.

First, for $i\le k$, $\Psi_i$ vanishes on special complexes. This follows from a dimension shifting argument. If $C$ is a special chain complex, the exact sequence of complexes $C_0\to C\to C'=C/C_0$ yields an exact triangle in $\DD_{\ge 0}(\Fu)$ and since $\Psi_i(C_0)=0$ for $0\le i\le k$, we get $\Psi_0(C)=0$, and $\Psi_{i}(C)=\Psi_i(C')=\Psi_{i-1}(C'[-1])$ for $1\le i\le k$. So the result is obtained by induction on $i$. 
Thus, by lemma \ref{lm-cross-effects}, it suffices to prove that for all complex $C$ and all $R$-modules $M_i$,  $C[M_1,\dots,M_d]$ is isomorphic in $\DD_{\ge 0}(\Fu)$ to a special complex. 

To prove this, we use the following key property. Any reduced (i.e. equal to zero as soon as one variable is equal to zero) functor in $\Fu(d)$ admits a projective resolution by projective functors of the form 
$$(N_1,\dots,N_d)\to P_1(N_1)\otimes\dots\otimes P_d(N_d)\;, \qquad(*)$$
where the one variable functors $P_i$ are reduced. Indeed, in the case $\Fu=\P_{d,R}$, the standard projectives are functors of the form $\Gamma^{s_1,X_1}\otimes\dots\otimes\Gamma^{s_d,X_d}$, and if $\Fu=\FF_R$, the standard projectives are of the form $R \hom_R(X_1,-)\otimes\dots \otimes R \hom_R(X_d,-)$ (here $R\hom_R(X,N)$ denotes the free $R$-module with basis $\hom_R(X,N)$). Moreover, if $F$ is reduced, one can use the canonical maps of $R$-modules $0\to N_i$ and $N_i\to 0$ to split off the nonreduced part in a projective resolution.  

The key property implies that the Cartan-Eilenberg projective resolution of the complex $\Cr_d C\in \mathbf{Ch}_{\ge 0}(\Fu(d))$ is a complex $D\in\mathbf{Ch}_{\ge 0}(\Fu(d))$ whose objects are of the form $(*)$. By evaluating the quasi-isomorphism $D\to C$ on the $R$-modules $M_i\otimes N$ we get a quasi-isomorphism of complexes of functors with a single variable $N$:
$$D(M_1\otimes N,\dots, M_d\otimes N)\to C[M_1,\dots,M_d](N)\;.\qquad (**)$$
The complex on the left-hand side is a special complex. This finishes the proof.
\end{proof}

\begin{remark}
The results of this section have obvious variants. For example, we can consider a \emph{contravariant} functor $\Psi$. In this case, we use upper parameterization: $\Psi_C(M):=\Psi(C^M)$ to get a functor $\Psi_C\in\Fu$.
We can also consider a functor $\Psi:\C\to \mathrm{Ab}$ where $\C$ is a subcategory of $\DD(\FF_R)$ or $\DD(\P_{s,R})$. In this case, we use free abelian groups $A$ as parameters: for all $M\in\V_R$, $F_A(M):=F(A\otimes_\Z M)$ and $F^A(M)=F(\hom_\Z(A,M))$. Therefore the functors $\Psi_C$ are functors from free abelian groups to abelian groups. In both cases, the results of the section remain \emph{mutatis mutandis} valid.
\end{remark}

\subsection{The Taylor towers of strict polynomial functors}\label{sec-4.0}
Given a functor $F\in \FF_R$ we still denote by $F$ its simplicial extension  
$$F:\simpl(\V_R)\to \simpl({R\text{-Mod}})\;.$$
The Taylor tower of this simplicial extension, as defined by Johnson and McCarthy \cite{JM}, is  a diagram of functors with source 
$\simpl(\V_R)$
and target $\simpl({R\text{-Mod}})$, of the following form:
$$\xymatrix{
\dots\ar[r]^-{q_{n+1}}&P_nF\ar[r]^-{q_{n}}&\dots\ar[r]^-{q_{2}}&P_1F\ar[r]^-{q_{1}}&P_0F\\
&&&&F\ar[u]^-{p_0}\ar[ul]^{p_1}\ar[ulll]^-{p_n}
}\;.$$
The term $P_nF$ is the $n$-th polynomial approximation of $F$.
Let us recall its definition when $F$ is a reduced functor (i.e. $F(0)=0$).
As recalled in section \ref{sec-2}, the $(n+1)$-th diagonal and the $(n+1)$-th cross effect yield a pair of adjoint functors between the categories of reduced functors:
\begin{align} \Cr_{n+1}: \Func_*(\V_R,R\text{-Mod})\rightleftarrows \Func_*(\V_R^{\times n+1},R\text{-Mod}):\Delta^*_{n+1}\;.\label{eq-adj}\end{align}
This pair of adjoints yields \cite[Chap. 8]{Weibel} a cotriple  $\perp_{n+1}=\Cr_{n+1}\circ \Delta_{n+1}^*$. Hence for all reduced functor $F\in\FF_R$, we get an augmented simplicial object $\perp_{n+1}^{*+1}F\xrightarrow[]{\epsilon} F$ in the category of reduced functors. Hence, given a simplicial object $X$, we get a bisimplicial $R$-module $\perp_{n+1}^{*+1}F(X)\xrightarrow[]{\epsilon} F(X)$. The simplicial $R$-module $P_nF(X)$ by taking first the diagonal of this bisimplicial augmented $R$-module and then the homotopy cofiber of the resulting simplicial $R$-module.

\begin{remark}
In \cite{JM}, the Taylor towers are actually defined for functors with values in chain complexes. The Dold-Kan correspondence asserts that $\simpl({R\text{-Mod}})$ is equivalent to the category of nonnegative chain complexes of $R$-modules, so the theory of \cite{JM} applies to our case. The definition given above is seen to be equivalent to the definition of \cite[p. 768]{JM} by using the Eilenberg-Zilber theorem and the fact that the homotopy cofiber of simplicial $R$-modules corresponds to the cone in the category of chain complexes.
\end{remark}

Let us fix $X\in \simpl(\V_R)$ and integers $i,n$, and let us denote by $\Psi(F)$ the $i$-th homotopy group of  $P_nF(X)$ (i.e. the $i$-th homology group of the normalized chains of $P_nF$ \cite[Chap. 8]{Weibel}).
Then $\Psi$ yields an additive functor
$$\Psi: \FF_R\to R\text{-Mod}\;. $$
Following the framework of section \ref{subsec-param}, we introduce the parameterized functor with 
parameter $M\in \V_R$:
$$\Psi_F:M\mapsto \Psi(F_M)= \pi_i\left(P_n F(M\otimes X)\right)\;.$$

\begin{lemma}\label{lm-estpol}
Let $F\in\FF_R$. The parameterized functor $\Psi_F$ is polynomial of degree less or equal to $n$. If $F$ is the underlying ordinary functor of a homogeneous strict polynomial functor of weight $s>0$, then so is $\Psi_F$.
\end{lemma}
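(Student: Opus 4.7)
The plan has two parts, corresponding to the degree bound and the strict polynomial refinement.

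For the degree bound, the essential input is the defining property of the Johnson--McCarthy Taylor tower: $P_nF$ is $n$-polynomial in a simplicial sense, meaning that its $(n+1)$-th cross effect $\Cr_{n+1}P_nF$ is simplicially contractible (all of its homotopy groups vanish on any choice of simplicial arguments). Granting this, the argument goes as follows. The functor $M\mapsto M\otimes X$ from $\V_R$ to $\simpl(\V_R)$ is additive, so cross effects of the composite $M\mapsto P_nF(M\otimes X)$ are computed by evaluating the cross effects of $P_nF$ on the tensored arguments. Moreover, $\Cr_{n+1}$ is exact, hence commutes with $\pi_i$. Combining these,
$$\Cr_{n+1}\Psi_F(M_1,\dots,M_{n+1})\simeq \pi_i\,\Cr_{n+1}P_nF(M_1\otimes X,\dots,M_{n+1}\otimes X),$$
which vanishes by the key property of $P_nF$. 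This gives $\deg\Psi_F\le n$.

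For the strict polynomial refinement, assume $F\in\P_{s,R}$. I plan to verify that each step in the construction of $\pi_i(P_nF(M\otimes X))$ stays inside $\P_{s,R}$ in the parameter $M$. First, for each simplicial degree $k$, the functor $M\mapsto M\otimes X_k$ is strict polynomial of weight $1$, so by composition of weights $M\mapsto F(M\otimes X_k)$ is strict polynomial of weight $s$. The simplicial structure maps of $X$ are morphisms in $\V_R$ and induce morphisms in $\P_{s,R}$ in the parameter $M$, so $M\mapsto F(M\otimes X)$ is a simplicial object of $\P_{s,R}$. Next, the cotriple $\perp_{n+1}=\Delta^*_{n+1}\circ\Cr_{n+1}$ lifts to an endofunctor of $\P_{s,R}$: the cross effect $\Cr_{n+1}:\P_{s,R}\to \P_{s,R}(n+1)$ is the lift reviewed in section \ref{subsec-w-vs-d}, while $\Delta^*_{n+1}:\P_{s,R}(n+1)\to\P_{s,R}$ is precomposition with the functor between Schur categories induced by the additive diagonal $\V_R\to \V_R^{\times n+1}$. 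Iterating, $\perp_{n+1}^{*+1}F$ is a simplicial object in $\P_{s,R}$, and its evaluation on $M\otimes X_\ell$ yields a bisimplicial object in $\P_{s,R}$ in the parameter $M$.

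The remaining steps, taking the diagonal of this bisimplicial object, forming the homotopy cofiber of the augmentation to $F(M\otimes X)$, and finally extracting $\pi_i$, all take place inside the abelian and cocomplete category $\P_{s,R}$; the homotopy groups of a simplicial object in an abelian category are subquotients of its Moore complex, hence still lie in that category. Therefore $\Psi_F\in\P_{s,R}$.

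The main technical point is the input from Johnson--McCarthy that $\Cr_{n+1}P_nF$ is simplicially contractible, which is the core polynomiality property of the Taylor tower. The rest is a formal verification that each construction respects the strict polynomial structure, relying on the abelianness and cocompleteness of $\P_{s,R}$ together with the already-established lifts of the cross effect and the diagonal to the strict polynomial setting.
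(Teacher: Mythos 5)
Your proof is correct and follows essentially the same route as the paper: the degree bound comes from the Johnson--McCarthy polynomiality of $P_nF$ (you unpack the cross-effect argument that the paper delegates to the citation of [JM, Lm 2.11 1)]), and the weight statement comes from lifting the cotriple $\Delta^*_{n+1}\circ\Cr_{n+1}$ and the subsequent simplicial constructions to $\P_{s,R}$, exactly as in the paper. (Incidentally, your composition order for the cotriple is the sensible one; the paper's displayed formula has the two factors transposed.)
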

\begin{proof}
The fact that $\Psi_F$ is polynomial of degree less or equal to $n$ follows from the fact that $P_nF$ is a degree $n$ approximation of $F$ \cite[Lm 2.11 1)]{JM}. Now as explained in section \ref{subsec-w-vs-d}, the adjunction \eqref{eq-adj} lifts on the level of strict polynomial functors. Hence the definition of the Taylor tower lifts to the level of strict polynomial functors. In particular, if $F$ is the underlying ordinary functor of a strict polynomial functor $\widetilde{F}\in\P_{s,R}$, then $\Psi_F$ is the underlying ordinary functor of the strict polynomial functor $\Psi_{\widetilde{F}}\in \P_{s,R}$ defined similarly.
\end{proof}

As a consequence of lemma \ref{lm-estpol}, proposition \ref{prop-amusette} and theorem \ref{thm-princ}, we obtain the following estimation of the integral torsion of $\pi_i\left(P_nF(X)\right)$.
\begin{theorem}\label{thm-res1}
Let $R$ be a commutative ring, let $s$ be a positive integer, let $F\in\P_{s,R}$, and let $n<s$. Then for all integer $i$ and all $X\in \simpl(\V_R)$, $\pi_i\left(P_nF(X)\right)$ is a torsion abelian group. Its $p$-primary part is zero if $n<\DIGp(s)$, and if $n\ge \DIGp(s)$, it is bounded by $p^r$, with $r=\big\lceil
\frac{n+1-\DIGp(s)}{p-1}\big\rceil$.
\end{theorem}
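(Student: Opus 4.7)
The plan is to transfer the question about $\pi_i(P_nF(X))$ to a statement about the strict polynomial functor $\Psi_F$ and then invoke the structural results of Section~\ref{sec-3}. Following the setup of Section~\ref{sec-4.0}, I define $\Psi(G) := \pi_i(P_nG(X))$, so that $\Psi_F(M) = \pi_i(P_nF(M\otimes X))$ and in particular $\Psi_F(R) = \pi_i(P_nF(X))$. By Lemma~\ref{lm-estpol}, $\Psi_F$ lies in $\P_{s,R}$ and its underlying ordinary functor is polynomial of degree at most $n$. Since $n<s$, Proposition~\ref{prop-amusette} forces every value of $\Psi_F$ to be a torsion abelian group; evaluating at $M=R$ already establishes the first assertion of Theorem~\ref{thm-res1}.

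Next, because $\Psi_F$ takes torsion values, the decomposition recalled just before Theorem~\ref{thm-princ} gives $\Psi_F \simeq \bigoplus_p {}_{(p)}\Psi_F$ in $\P_{s,R}$, and the $p$-primary part of $\pi_i(P_nF(X))$ is exactly ${}_{(p)}\Psi_F(R)$. Each nonzero summand ${}_{(p)}\Psi_F$ is a $p$-primary strict polynomial functor of weight $s$ whose degree is at most $\deg(\Psi_F)\le n$. I then apply Theorem~\ref{thm-princ}: part~(1) says that if ${}_{(p)}\Psi_F\neq 0$ then $\deg({}_{(p)}\Psi_F) \in \INT(p,s)$, whose minimum element is $\DIGp(s)$. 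Thus the hypothesis $n<\DIGp(s)$ is incompatible with a nonzero $p$-primary part, which gives the vanishing claim. When $n\ge \DIGp(s)$, the inequality $\deg({}_{(p)}\Psi_F) \le n<s$ makes part~(2) of the theorem applicable, and monotonicity of the ceiling function in the degree yields the desired bound by $p^r$ with $r = \lceil(n+1-\DIGp(s))/(p-1)\rceil$.

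There is no substantive obstacle here: the real work has been done upstream, in proving Lemma~\ref{lm-estpol}, which ensures the homotopy invariant assembles into an object of $\P_{s,R}$ with controlled weight and degree, and in establishing Theorem~\ref{thm-princ}. The only bookkeeping point is that the degree bound descends to each $p$-primary summand, but this is immediate from the exactness of the $p$-primary decomposition together with the definition of degree via cross effects.
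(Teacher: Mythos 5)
Your proposal is correct and follows exactly the route the paper intends: the paper derives Theorem \ref{thm-res1} directly from Lemma \ref{lm-estpol}, Proposition \ref{prop-amusette} and Theorem \ref{thm-princ}, and your write-up supplies precisely the bookkeeping (the $p$-primary decomposition of $\Psi_F$, evaluation at $R$, and monotonicity of the ceiling in the degree) that the paper leaves implicit.
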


The homotopy fiber of the morphism $p_n:P_nF\to P_{n-1}F$ is called the $n$-th layer of the Taylor tower of $F$, and is denoted by $D_nF$ \cite[p. 772]{JM}. As another consequence of theorem \ref{thm-princ}, we prove the following lacunarity phenomenon for the homotopy groups of $D_nF$.

\begin{theorem}\label{thm-res2}
Let $R$ be a commutative ring, let $s$ be a positive integer, let $F\in\P_{s,R}$, and let $n<s$. Then for all integer $i$ and all $X\in \simpl(\V_R)$, $\pi_i\left(D_nF(X)\right)$ is a torsion abelian group. If $n\ne s\mod (p-1)$, then its $p$-primary part is zero.
\end{theorem}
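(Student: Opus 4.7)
The plan is to reduce the theorem to a cross-effect computation via the parameterization technique of section~\ref{subsec-param}, then invoke the structural description of $n$-homogeneous functors to conclude. For the torsion statement, I would apply Theorem~\ref{thm-res1} to both $P_nF$ and $P_{n-1}F$ and use the long exact sequence associated to the homotopy fiber sequence $D_nF\to P_nF\to P_{n-1}F$: since the groups $\pi_i(P_nF(X))$ and $\pi_{i+1}(P_{n-1}F(X))$ are torsion, $\pi_i(D_nF(X))$ sits as an extension of a subgroup of the first by a quotient of the second, so it is torsion.

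For the $p$-primary vanishing, I would introduce the parameterized functor $\Psi_F(M):=\pi_i(D_nF(M\otimes X))$. Adapting the proof of Lemma~\ref{lm-estpol} (the full Taylor tower, and hence $D_n$, lifts to strict polynomial functors and commutes with parameterization), $\Psi_F$ is a homogeneous strict polynomial functor of weight $s$ of ordinary degree at most $n$. The key computation is of the $n$-th cross effect $\Cr_n\Psi_F(M_1,\dots,M_n)=\pi_i(\Cr_n D_nF(M_1\otimes X,\dots,M_n\otimes X))$: since $D_nF$ is $n$-homogeneous, $\Cr_n D_nF$ is additive in each of its $n$ variables, hence $\Cr_n\Psi_F$ is a multilinear strict polynomial $n$-functor of total weight $s$.

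Next, I would decompose $\Cr_n\Psi_F$ by weights $(s_1,\dots,s_n)$ with $\sum s_i=s$ using Lemma~\ref{lm-decomp}. By Proposition~\ref{prop-classif-add}, an additive strict polynomial functor of weight $s_i$ is nonzero only if $s_i$ is $1$ or a prime power, in which case it is supported on torsion of a single prime. Over the commutative ring $R$, a tensor product of two primary factors at distinct primes $q\neq q'$ vanishes (the prime invertible on one factor annihilates the tensor), so the $p$-primary part $_{(p)}\Cr_n\Psi_F$ is supported on tuples in which every $s_i$ is a power of $p$. By Lemma~\ref{lm-carries}, such a decomposition of $s$ into $n$ powers of $p$ exists precisely when $n\in\INT(p,s)$, i.e.\ when $n\equiv s\pmod{p-1}$. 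The hypothesis therefore forces $_{(p)}\Cr_n\Psi_F=0$.

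Finally, I would close the argument by invoking the structural description of $n$-homogeneous functors in the Johnson--McCarthy calculus, namely an equivalence $D_nF(Y)\simeq (\Cr_n D_nF(Y,\dots,Y))_{h\Sigma_n}$ realizing $D_nF$ as the derived diagonal coinvariants of its multilinear cross effect. The spectral sequence for these homotopy orbits computes $\pi_i(D_nF(Y))$ from $H_\ast(\Sigma_n;\pi_\ast(\Cr_n D_nF(Y,\dots,Y)))$; passage to $p$-primary parts is compatible with both diagonal evaluation and the homotopy orbit construction, so the vanishing of $_{(p)}\Cr_n\Psi_F$ propagates to $_{(p)}\Psi_F(M)=0$ for every $M$, and specializing to $M=R$ yields the theorem. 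The main obstacle is this last step: one must verify carefully that the Johnson--McCarthy structure theorem lifts to the strict polynomial framework used here, and that forming $p$-primary parts is compatible with the homotopy orbit formula---equivalently, that the vanishing of the $p$-primary part of the $n$-th cross effect really forces the vanishing of all $p$-primary homotopy of $D_nF$.
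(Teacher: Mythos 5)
Your torsion argument (the long exact sequence of the fibration $D_nF\to P_nF\to P_{n-1}F$ combined with Theorem \ref{thm-res1}) is exactly the paper's. For the $p$-primary vanishing you take a genuinely different route, and it has a real gap at its final step. The cross-effect computation itself is sound and is essentially the paper's Proposition \ref{prop-degree}: the $n$-th cross effect of the weight-$s$ strict polynomial functor $M\mapsto\pi_j(D_nF(M\otimes X))$ is additive in each variable, so by Lemma \ref{lm-decomp} and Proposition \ref{prop-classif-add} its $p$-primary part is supported on weight tuples $(p^{r_1},\dots,p^{r_n})$ with $\sum p^{r_i}=s$, and by Lemma \ref{lm-carries} no such tuple exists when $n\ne s\bmod (p-1)$. (Minor point: membership in $\INT(p,s)$ requires $\DIGp(s)\le n\le s$ in addition to the congruence, but you only use the direction you need.) The gap is the passage from the vanishing of the $p$-primary part of the cross effect to that of $\pi_i(D_nF)$ itself. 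A degree bound alone cannot give this --- a functor of degree $<n$ has vanishing $n$-th cross effect without being zero --- so you must use that $D_nF$ is $n$-homogeneous, and you invoke the classification $D_nF(Y)\simeq(\Cr_nD_nF(Y,\dots,Y))_{h\Si_n}$ together with the homotopy-orbit spectral sequence. This is precisely the step you flag as uncertain, and it is where the proof is incomplete: that structure theorem (stated in \cite{JM} in terms of the multilinearization $D_1^{(n)}\Cr_nF$ rather than $\Cr_nD_nF$, which requires a further identification) is nowhere established in the framework of this paper, and verifying it, along with the compatibility of $p$-primary parts with the homotopy-orbit formula, would be a substantial addition.

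The paper avoids any structure theorem for the layers. It first localizes: it suffices to show $\pi_i(D_nG(X))=0$ for $G=F\otimes_\Z\Z_{(p)}$, and setting $\overline{G}=G\circ\iota_X$ with $\iota_X(M)=M\otimes X$ one has $D_n\overline{G}=(D_nG)\circ\iota_X$ because $\iota_X$ is additive. Writing $\deg(P_n\overline{G})$ for the supremum of the degrees of the functors $M\mapsto\pi_i(P_n\overline{G}(M))$, the universal property of the Taylor tower shows that $n\mapsto\deg(P_n\overline{G})$ is nondecreasing, that $D_n\overline{G}$ has trivial homotopy groups if and only if $\deg(P_{n-1}\overline{G})=\deg(P_n\overline{G})$, and that a jump at $n$ forces $\deg(P_n\overline{G})=n$. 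Since each $M\mapsto\pi_i(P_n\overline{G}(M))$ is a $p$-primary functor in $\P_{s,R}$, Theorem \ref{thm-princ}(1) confines $\deg(P_n\overline{G})$ to $\INT(p,s)\cup\{-\infty\}$, so jumps can only occur for $n\in\INT(p,s)$, in particular only when $n=s\bmod(p-1)$. The arithmetic input is thus the same as yours (part (1) of Theorem \ref{thm-princ}), but the homogeneous-layer classification is replaced by an elementary monotonicity argument on the tower. To salvage your route you would have to import and verify the Johnson--McCarthy structure theorem in this setting; the paper's argument is the cheaper fix.
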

\begin{proof}
By theorem \ref{thm-res1}, $ \pi_i(P_nF(X))$ and $\pi_{i+1}(P_{n-1}F(X))$ are torsion abelian groups. Hence the $\pi_i\left(D_nF(X)\right)$ is also a torsion abelian group by the long exact sequence of homotopy groups:
$$\dots\to \pi_{i+1}(P_{n-1}F(X))\xrightarrow[]{\partial} \pi_i(D_nF(X))\to \pi_i(P_nF(X))\to \dots $$

To prove that the $p$-primary part of $\pi_i\left(D_nF(X)\right)$ is zero, it suffices to prove that the localization $\pi_i\left(D_nF(X)\right)\otimes_\Z\Z_{(p)}$ is zero. Since localization is exact, this is equivalent to prove that $\pi_i(D_nG(X))$ is zero, where $G$ denotes the functor which sends $X$ to $F(X)\otimes_\Z\Z_{(p)}$.
Let us define 
$\overline{G}:=G\circ\iota_X$,
where $\iota_X:\V_R\to \simpl(\V_R)$ is the functor defined by $\iota_X(M)=M\otimes X$.
Since $\iota_X$ is additive, we have:
\begin{align*}
P_n\overline{G}=(P_nG)\circ \iota_X\;,\quad\text{ and }D_n\overline{G}=(D_nG)\circ \iota_X\;.
\end{align*}
Hence, to prove theorem \ref{thm-res2}, it suffices to prove that for all $M\in\V_R$, $D_n\overline{G}(M)$ has trivial homotopy groups if $n\ne s\mod (p-1)$.

To prove this, we use the universal property of $P_n\overline{G}$. To be more specific, let $K:\V_R\to \simpl(R\text{-Mod})$ be a functor which is polynomial of degree less or equal to $n$ (i.e. all its homotopy groups are polynomial functors of degree less or equal to $n$ in the sense of definition \ref{def-deg}), and let $\tau:\overline{G}\to K$ be a natural transformation. Then for all $M\in\V_R$, we can consider the morphisms $\tau:\overline{G}(M)\to K(M)$ and $p_n:\overline{G}(M)\to P_n\overline{G}(M)$ in the homotopy category of simplicial $R$-modules, and it it shown in \cite[Lm 2.11 3)]{BM} that there exists a unique morphism $\tau'$ in this homotopy category which fits into a commutative diagram
$$\xymatrix{
\overline{G}(M)\ar[r]^-{\tau}\ar[d]_-{p_n}& K(M)\\
P_n\overline{G}(M)\ar@{-->}[ru]_-{\exists!\,\tau'}
}.$$
By its definition \cite[p. 770]{JM}, the map $q_n:P_n\overline{G}\to P_{n-1}\overline{G}$ in the Taylor tower of $\overline{G}$ is a representative of the morphism obtained by factorization of $p_{n-1}:\overline{G}\to P_{n-1}\overline{G}$ through $p_n$. 
Given a functor $K:\V_R\to \simpl(R\text{-Mod})$, we let $\deg K$ be the supremum of the degrees (in the sense of definition \ref{def-deg}) of the functors $M\mapsto \pi_i(K(M))$. The universal property of $P_n\overline{G}$ and the construction of $q_n$ have the following two consequences:
\begin{itemize}
\item[(i)] for all integers $n$, $\deg(P_{n-1}\overline{G})\le \deg(P_{n}\overline{G})$,
\item[(ii)] $D_n\overline{G}(M)$ has trivial homotopy groups for all $M\in\V_R$, if and only if  $\deg(P_{n-1}\overline{G})= \deg(P_{n}\overline{G})$.
\end{itemize}
The statements (i) and (ii) in turn imply:
\begin{itemize}
\item[(iii)]
 if $\deg(P_n\overline{G})=d$, then the morphism $q_k:P_k\overline{G}\to P_{k-1}\overline{G}$ induces an isomorphism on the level of homotopy groups for $d<k\le n$.
\end{itemize}
Finally, the functors $M\mapsto \pi_i(P_n\overline{G}(M))$ are strict polynomial of weight $s$, of degree less or equal to $n$. Moreover, by the construction of $\overline{G}$, $\pi_i(P_n\overline{G}(M))$ is a $p$-primary functor. Hence, theorem \ref{thm-princ} implies that:
\begin{itemize}
\item[(iv)]
$\deg(P_n\overline{G})\in\;(\INT(p,s)\cap [1,n])\cup \{-\infty\}$.
\end{itemize}
To sum up, we have proved that the function
$$\begin{array}{ccc}
\mathbb{N}&\to & \{-\infty\}\cup \mathbb{N}\\
n & \mapsto & \deg(P_n\overline{G})
\end{array}
$$
is nondecreasing by (i), with values in $\INT(p,s)\cup \{-\infty\}$ by (iv). Moreover the jumps, i.e. the values for which $\deg P_n\overline{G}>\deg P_{n-1}\overline{G}$, must occur for $n\in \INT(p,s)$ by (iii). In particular, the jumps must occur for $n=s\mod (p-1)$, and by (ii), the homotopy groups of $D_n\overline{G}$ are zero for $n\ne s\mod (p-1)$. This finishes the proof.  
\end{proof}

\subsection{Derived functors} 
Given a complex $C\in \mathbf{Ch}_{\ge 0}(\FF_R)$ and a simplicial object $X\in \simpl(\V_R)$ we can form a bigraded object $C_i(X_j)$ equipped with a differential relative to the index $i$ and a simplicial structure relative to the index $j$. By taking normalized chains and the total complex, we obtain a chain complex, which we denote by $c_X(C)$. If $X$ is fixed, this defines a homological functor
$$\Phi:=c_X:\DD_{\ge 0}(\FF_R)\to \DD_{\ge 0}(R\text{-Mod})\;. $$
Following the framework of section \ref{subsec-param}, we introduce the parameterized homology groups with parameter $M\in \V_R$, as an object of $\FF_R$:
$$(\Psi_i)_C:M\mapsto H_i(\Phi_C(M))= H_i(c_{X\otimes M} C)\;.$$
\begin{lemma}\label{lm-applic-derived}
(1) If $C$ is the underlying complex of a complex in $\P_{s,R}$, then the functor
$(\Psi_i)_C$ is the underlying ordinary functor of a homogeneous strict polynomial functor of weight $s$.

(2) If $X$ is $(n-1)$-connected, then for all $i<nd$ and all $C$, the functor $(\Psi_i)_C$ is polynomial of degree less than $d$.
\end{lemma}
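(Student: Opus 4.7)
The plan has two parts corresponding to the two claims.

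For part (1), I would observe that the bicomplex defining $c_{X\otimes M}(C)$ naturally lives in the abelian category $\P_{s,R}$. For each pair $(p,q)$, the functor $M\mapsto C_p(X_q\otimes M)$ is the parameterization of $C_p\in\P_{s,R}$ by $X_q\in\V_R$, hence an object of $\P_{s,R}$ by section~\ref{subsec-param}. The differentials of $C$ and the face/degeneracy maps of $X_\bullet$ are natural in $M$ and lift to morphisms in $\P_{s,R}$, so $M\mapsto c_{X\otimes M}(C)$ is a chain complex in $\P_{s,R}$; taking $H_i$ yields an object of $\P_{s,R}$ whose underlying ordinary functor is $(\Psi_i)_C$.

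For part (2), the plan is to invoke Proposition~\ref{prop-vanish-implies-cross} applied to the homological functor $\Phi=c_X$ with $k=nd-1$, verifying its hypothesis by a strategy parallel to that proposition's own proof. For reduced $F_1,\dots,F_d\in\Fu$ and $i<nd$, I would resolve the $d$-fold external tensor product by standard reduced projectives (of the form $\Gamma^{s_k,X_k}$ or $R\hom(X_k,-)$) via the Cartan--Eilenberg argument used inside Proposition~\ref{prop-vanish-implies-cross}, reducing the vanishing $H_i(c_X(F_1\otimes\cdots\otimes F_d))=0$ to the case where each $F_k$ is such a projective. In that case $F_k(X_j)\in\V_R$, and the simplicial $R$-module $j\mapsto(F_1\otimes\cdots\otimes F_d)(X_j)$ is the diagonal of the multisimplicial object $(j_1,\dots,j_d)\mapsto F_1(X_{j_1})\otimes_R\cdots\otimes_R F_d(X_{j_d})$; iterated Eilenberg--Zilber shuffle maps then yield a quasi-isomorphism
\[
c_X(F_1\otimes\cdots\otimes F_d)\;\simeq\;c_X(F_1)\otimes_R\cdots\otimes_R c_X(F_d)
\]
in which each factor is a chain complex of projective $R$-modules. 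The Künneth spectral sequence then applies, and the problem reduces to the connectivity estimate $H_j(c_X(F_k))=0$ for $j<n$.

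The main obstacle is this last connectivity estimate, which is a Dold--Puppe style statement for reduced functors on $(n-1)$-connected simplicial modules. My approach would be to replace $X$, up to weak equivalence, by a simplicial $R$-module $X'$ with $X'_j=0$ for $j<n$, via inverse Dold--Kan applied to the truncation $\tau_{\ge n}NX$; reducedness of $F_k$ then forces $F_k(X')_j=F_k(0)=0$ in the same range, so $c_{X'}(F_k)$ has vanishing chains below degree $n$. The technical subtlety to resolve is that $X'$ need not lie in $\simpl(\V_R)$, which I would address either by extending $F_k$ to the full category of $R$-modules (possible since the standard reduced projectives are colimits of representables) or by running the connectivity argument directly through a spectral sequence based on the skeletal filtration of $X$.
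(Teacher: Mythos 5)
Part (1) of your proposal is essentially the paper's argument, and the architecture of part (2) --- verify the hypothesis of Proposition \ref{prop-vanish-implies-cross} with $k=nd-1$ via an Eilenberg--Zilber decomposition plus a connectivity estimate for each factor --- is also the paper's route. But the final step, where all the content sits, has a genuine gap. You propose to replace $X$ ``up to weak equivalence'' by an $X'$ with $X'_j=0$ for $j<n$ and then evaluate $F_k$ on $X'$. For a nonadditive functor $F_k$, a weak equivalence $X\to X'$ does \emph{not} induce a weak equivalence $F_k(X)\to F_k(X')$; this failure is exactly why derived functors of nonadditive functors must be computed on degreewise projective models. Neither of your remedies addresses this: extending $F_k$ to all $R$-modules does not make it send quasi-isomorphisms to quasi-isomorphisms, and the skeletal-filtration spectral sequence is left unspecified. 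What saves the argument --- and what the paper uses --- is that the replacement can be taken to be a simplicial \emph{homotopy} equivalence inside $\simpl(\V_R)$: since $NX$ is a bounded-below complex of finitely generated projectives with $H_i=0$ for $i<n$, its acyclic bottom part splits off, so $NX$ is chain homotopy equivalent to a complex of finitely generated projectives concentrated in degrees $\ge n$; inverse Dold--Kan then yields $Y\in\simpl(\V_R)$ with $Y_j=0$ for $j<n$ and a simplicial homotopy equivalence $X\simeq Y$, which \emph{is} preserved by every functor. So your stated worry (that $X'$ leaves $\simpl(\V_R)$) is unfounded if the truncation is done this way, while the worry you did not state (homotopy versus weak equivalence) is the real one.

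Once this is repaired, your detour through projective resolutions and the K\"unneth spectral sequence becomes unnecessary: for any reduced $F_k$ one has $(c_YF_k)_j=F_k(0)=0$ for $j<n$ on the nose, so the Eilenberg--Zilber model $c_Y(F_1)\otimes\cdots\otimes c_Y(F_d)$ is literally zero in degrees $<nd$, giving $H_i(c_X(F_1\otimes\cdots\otimes F_d))=0$ for $i<nd$ directly. This is the paper's shorter argument; your longer reduction is valid in principle but only pushes the same connectivity question down to the standard projectives, where it still has to be resolved by the homotopy-equivalence replacement above.
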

\begin{proof}
(1) The definition of $c_X\widetilde{C}$ above makes sense when $\widetilde{C}$ is a complex of strict polynomial functors. Hence if $C$ is the underlying complex of ordinary functors a complex $\widetilde{C}\in\mathbf{Ch}_{\ge 0}(\P_{s,R})$,  then $(\Psi_i)_C$ is the underlying functor of the strict polynomial functor $(\Psi_i)_{\widetilde{C}}\in\P_{s,R}$.

(2) The simplicial $R$-module $X$ is homotopy equivalent to a simplicial $R$-module $Y$ with $Y_i=0$ for $i< n$. So for $F\in\FF_R$, $c_X F$ is homotopy equivalent to $c_Y F$. Assume that $F$ is of the form $F_1\otimes \dots\otimes F_d$. Then the Eilenberg-Zilber theorem yields an homotopy equivalence
$$ c_Y(F_1\otimes\dots\otimes F_d)\simeq c_Y(F_1)\otimes\dots\otimes c_Y(F_d)\;. \qquad (*)$$
If the functors $F_k$ are reduced, then $(c_Y F_k)_i=0$ for $i<n$. Thus, the right-hand side of $(*)$ is a complex which is zero in degrees $i<nd$. Hence 
$$\Psi_i(F_1\otimes\dots\otimes F_d)=H_i(c_X(F_1\otimes\dots\otimes F_d))=0$$
for $i<nd$ and the result follows from proposition \ref{prop-vanish-implies-cross}.
\end{proof}

By lemma \ref{lm-applic-derived}, if $C$ is a chain complex in $\P_{s,R}$, we can apply theorem \ref{thm-princ} to the functor $M\mapsto H_i(c_{X\otimes M}C)$ to obtain the following properties of the torsion in the homology of the complex of abelian groups $c_X C$.

\begin{theorem}\label{thm-applic-derive}
Let $R$ be a commutative ring, let $C\in\mathrm{Ch}_{\ge 0}(\P_{s,R})$, and let $X$ be an $(n-1)$-connected simplicial $R$-module, which is degreewise finitely generated and projective over $R$. 
\begin{enumerate}
\item For $i<ns$, $H_i(c_X C)$ is a torsion abelian group.
\item For $i < nd$, with $\DIGp(s)\le d<s$, the $p$-primary torsion of $H_i(c_X C)$ is bounded by $p^r$ with
$r=\big\lceil
\frac{d-\DIGp(s)}{p-1}\big\rceil$.
\item In particular, for $i<n\DIGp(s)$ the $p$-primary part of  $H_i(c_X C)$ is zero and for $i<n$, $H_i(c_X C)$ is equal to zero.
\end{enumerate} 
\end{theorem}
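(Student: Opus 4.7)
The plan is to repackage the theorem as a direct application of Theorem \ref{thm-princ} and Proposition \ref{prop-amusette} to a parameterized version of $H_i(c_X C)$. For each integer $i$, I would introduce $(\Psi_i)_C:\V_R\to R\text{-Mod}$ sending $M\mapsto H_i(c_{X\otimes M}C)$. By Lemma \ref{lm-applic-derived}(1) this is the underlying ordinary functor of a homogeneous strict polynomial functor of weight $s$, and by Lemma \ref{lm-applic-derived}(2) the $(n-1)$-connectedness of $X$ yields, for every $d$, the degree bound $\deg((\Psi_i)_C)<d$ as soon as $i<nd$. Since $(\Psi_i)_C(R)=H_i(c_XC)$, the three statements will follow by feeding appropriate values of $d$ into these general results.

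For (1), I would take $d=s$: then $\deg((\Psi_i)_C)<s$ whenever $i<ns$, so Proposition \ref{prop-amusette} makes all values of $(\Psi_i)_C$ torsion, and evaluation at $R$ yields the torsion statement for $H_i(c_XC)$. Once (1) is in hand, $(\Psi_i)_C$ splits as $\bigoplus_p {_{(p)}(\Psi_i)_C}$ in $\P_{s,R}$, each summand inheriting the degree bound. For (2), I would then apply Theorem \ref{thm-princ}(2) to each nonzero $p$-primary summand, which lies in $\P_{s,R}$, is $p$-primary, and has degree at most $d-1<s$. The resulting exponent $\lceil\frac{\deg+1-\DIGp(s)}{p-1}\rceil$ is dominated by $\lceil\frac{d-\DIGp(s)}{p-1}\rceil$, which is the claimed bound.

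For (3), the absolute vanishing $H_i(c_XC)=0$ for $i<n$ comes from $d=1$: here $(\Psi_i)_C$ is reduced, since $(\Psi_i)_C(0)=H_i(c_0C)=0$ (each term of $C$ has positive weight, hence vanishes on $0$), and it has degree at most $0$, hence is identically zero. The $p$-primary vanishing for $i<n\DIGp(s)$ comes from taking $d=\DIGp(s)$ and invoking Theorem \ref{thm-princ}(1): a nonzero $p$-primary strict polynomial functor of weight $s$ has degree in $\INT(p,s)$ and therefore at least $\DIGp(s)$, so the bound $\deg((\Psi_i)_C)<\DIGp(s)$ forces ${_{(p)}(\Psi_i)_C}=0$; this argument absorbs the edge case $s<p$, where $\DIGp(s)=s$ and (2) is vacuous. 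I do not anticipate a substantial obstacle: the statement is essentially a bookkeeping corollary of Lemma \ref{lm-applic-derived} and the general torsion theorem, and the only real care is to track the strict-vs-weak inequalities so that one lands on $\lceil(d-\DIGp(s))/(p-1)\rceil$ rather than $\lceil(d+1-\DIGp(s))/(p-1)\rceil$.
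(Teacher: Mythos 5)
Your proposal is correct and follows exactly the route the paper intends: the paper's (one-sentence) proof is precisely to apply Lemma \ref{lm-applic-derived} and Theorem \ref{thm-princ} to the parameterized functor $M\mapsto H_i(c_{X\otimes M}C)$, and your bookkeeping — degree $<d$ for $i<nd$, the $p$-primary splitting after (1), the shift from $\deg\le d-1$ to the exponent $\lceil(d-\DIGp(s))/(p-1)\rceil$, and the reducedness argument for $i<n$ — fills in the details accurately.
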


The homology groups $H_i(c_X C)$ studied in theorem \ref{thm-applic-derive} are known under various names, depending on the complex $C$ and the simplicial object $X$ considered. To make theorem \ref{thm-applic-derive} more concrete, we give some of them below. We will denote by $K(M,n)$ a free simplicial $R$-module, whose homotopy groups are all zero but $\pi_n(K(M,n))\simeq M$.

\begin{example}[Derived functors]
For a finitely generated abelian group $A$ and a functor $F\in\FF_\Z$ (considered as a complex concentrated in degree zero), the homology groups $H_i(c_{K(A,n)} F)$ are denoted by $L_iF(A;n)$ in \cite{DP1,DP2} and called the derived functors of $F$. 

Theorem \ref{thm-applic-derive} give bounds for the integral torsion of these derived functors when $F$ is actually strict polynomial. Our result does not say anything about $L_iF(A;n)$ for $i\ge ns$, but it is not hard to show that $L_iF(A;n)=0$ for $i> ns$ and $A$ free. This can be proved directly by using an explicit model of $K(A,n)$ and cross effects of $F$. Similarly, if $A$ is not free, one can prove (using cross effects and the fact that abelian groups have two steps free resolutions) that $L_iF(A;n)=0$ for $i> (n+1)s+1$. So, almost all degrees are covered by the results above.
\end{example}

\begin{example}[Stable derived functors]
The low dimension groups $L_{i+n}F(A;n)$ for $0\le i<n$ are called the stable derived functors of $F$ (they do not depend on $n$), and denoted by $L^{\mathrm{st}}_iF(A)$. Assume that $F\in\P_{s,\Z}$ with $s>1$. As a particular case of theorem \ref{thm-applic-derive}, we obtain that $L^{\mathrm{st}}_iF(A)$ is zero if $s\ne p^r$, and is a $\mathbb{F}_p$-vector space if $s=p^r$. 

This result was proved by Dold and Puppe in the special case $F=S^s$ in \cite{DP2} (their proof does not work for an arbitrary strict polynomial functor). Since there is an isomorphism 
$\bigoplus_{s\ge 0} L_i^{\mathrm{st}}S^s(A) = H\Z_*(HA)$ \cite{DP2}, this results explains why only prime torsion occurs in the stable homology of Eilenberg-Mac Lane spaces, as alluded to in the introduction.
\end{example}

\begin{example}[Singular homology of symmetric spaces]
Let $G$ be a subgroup of the symmetric group $\Si_s$. Given a CW-complex $Y$ we can consider its $G$-symmetric product $SP^G(Y):=Y^{\times s}/G$ where 
$G$ acts by permuting the factors of the product. We denote by $S^G\in \P_{s,\Z}$ the algebraic analogue of $G$-symmetric products, namely $S^G(A)=(A^{\otimes s})_G$.
Assume that $Y$ is $(n-1)$-connected with finitely generated singular homology groups $H_i(Y)$ for all $i$ ($n\ge 2$). Then the simplicial abelian group of singular chains of $Y$ is homotopy equivalent to a $(n-1)$-connected simplicial abelian group $X\in \simpl(\P_\Z)$. Dold proved \cite[Proof of thm (7.2)]{Do} that 
$H_i(c_X S^G)$ is isomorphic to the singular homology with integral coefficients $H_i(SP^G(Y))$. 
\end{example}

\begin{example}[Ringel duality]
Let $R$ be a commutative ring. The classical Ringel duality functor for representations of Schur algebras can be reformulated in a nice way by using the language of strict polynomial functors. To be more specific, its $n$-fold iteration can be described \cite{TouzeRingel,TouzeArolla} as an equivalence of triangulated categories
$$\Theta^n:\DD(\P_{s,R})\to \DD(\P_{s,R})$$ 
which sends a complex $C$ to the complex $M\mapsto c_{K(M,n)}C[-ns]$, where the brackets denote the suspension, that is $H_i(C[-ns])=H_{ns+i}(C)$. 
\end{example}

\subsection{Functor (co)homology}
Let $R$ be a commutative ring. By functor (co)homology, we mean $\Tor$ or
$\Ext$-groups in $\F_R$, or in related categories. These groups have interpretations in terms of topological Hoschild homology \cite{JP,PW}, and stable homology of classical groups
\cite{Djament,DV}. In this section, we apply our methods to obtain information
$\Ext$-groups, leaving to the reader the completely analogous case of
$\Tor$-groups.

From now on, we let $\A$ be a full abelian subcategory of $\FF_R$, with enough
projectives and injectives, such that for all $M\in \V_R$ the parameterization
functors restrict to functors: $-^M,-_M: \A\to \A$. For example we can take for
$\A$ the category $\FF_R$ itself, or its full subcategory whose objects are the
polynomial functors of degree less or equal to $n$.
Since the parameterization functors are adjoint and exact, 
we have a graded isomorphism, natural with
respect to $F,G\in\A$ and $M\in\V_R$:
$$\Ext^*_{\A}(F^M,G)\simeq \Ext^*_{\A}(F,G_M)\;.$$
So we denote by $\uE^i(F,G)\in\FF_R$ any one of the two
isomorphic functors:
$$M\mapsto \Ext^i_{\A}(F^M,G)\;,\qquad M\mapsto \Ext^i_{\A}(F,G_M)\;.$$

\begin{lemma}\label{lm-applic1}
If $F$ or $G$ is polynomial of degree less or equal to $d$, then $\uE^i(F,G)$ is
also polynomial of degree less or equal to $d$.
If $F$ or $G$ is in $\P_{s,R}$, then $\uE^i(F,G)$ can be viewed as an object of
$\P_{s,R}$.
\end{lemma}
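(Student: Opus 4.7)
My plan has two parts. For (1), I will compute the cross effect $\Cr_{d+1}\uE^i(F,G)$ in the variable $M$ and show it vanishes. Using the presentation $\uE^i(F,G)(M)=\Ext^i_\A(F,G_M)$, I first observe that parameterization preserves polynomial degree: the $(d+1)$-th cross effect of $M\mapsto G_M$ (as a functor $\V_R\to\A$), taken at $(M_1,\dots,M_{d+1})$ and evaluated pointwise at $N$, equals $\Cr_{d+1}G(M_1\otimes N,\dots,M_{d+1}\otimes N)$, which is zero whenever $\Cr_{d+1}G=0$. I then use that $\Ext^i_\A(F,-)$ preserves finite direct sums in its second variable (a direct sum of injective resolutions is still injective, and $\hom$ splits over direct sums of its target), so it commutes pointwise with cross effects. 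Combining these yields $\Cr_{d+1}\uE^i(F,G)=0$. The case where $F$ has degree $\le d$ is symmetric: it uses the presentation $\uE^i(F,G)(M)=\Ext^i_\A(F^M,G)$, the fact that the parameterization $M\mapsto F^M$ preserves degree (now contravariantly in $M$), and the analogous direct-sum property of $\Ext^i_\A(-,G)$ in its first variable.

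For (2), my plan is to lift the parameterization to the level of the Schur category. Assume first $G\in\P_{s,R}$, viewed as an $R$-linear functor $\Gamma^s\V_R\to R\text{-Mod}$. For each fixed $N\in\V_R$, the additive endofunctor $M\mapsto M\otimes N$ of $\V_R$ promotes canonically to an $R$-linear endofunctor $\mu_N$ of $\Gamma^s\V_R$: on morphisms, $\mu_N$ sends $\phi\in\hom_{\Si_s}(M^{\otimes s},(M')^{\otimes s})$ to $\phi\otimes\Id_{N^{\otimes s}}$ after identifying $(M\otimes N)^{\otimes s}\simeq M^{\otimes s}\otimes N^{\otimes s}$. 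Varying $N$ and applying $G$ assembles $M\mapsto G_M$ into an $R$-linear functor $\Gamma^s\V_R\to\A$; composing with the $R$-linear additive functor $\Ext^i_\A(F,-):\A\to R\text{-Mod}$ yields the desired lift of $\uE^i(F,G)$ to $\P_{s,R}$. If instead $F\in\P_{s,R}$, I use the identification $\hom_R(M,N)\simeq M^*\otimes N$, which gives $F^M=F_{M^*}$, and combine the lifted parameterization $F_\bullet:\Gamma^s\V_R\to\A$ with the canonical self-duality $(-)^*:(\Gamma^s\V_R)^{\op}\simeq\Gamma^s\V_R$ and the contravariant $R$-linear functor $\Ext^i_\A(-,G)$.

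The main obstacle is not conceptual but organizational: keeping track of covariant and contravariant dependencies when shifting between the two presentations of $\uE^i$ and the two slots of $\Ext$, and verifying that all of the pieces above are genuinely $R$-linear. Everything else reduces to two routine facts: $\hom$ (hence $\Ext$) preserves finite direct sums in each variable, so cross effects commute with the $\Ext$ functors; and the additive functors $-\otimes N$ and $\hom_R(-,N)$ of $\V_R$ lift canonically to $R$-linear endofunctors of $\Gamma^s\V_R$.
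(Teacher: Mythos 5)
Your proof is correct and follows essentially the same route as the paper: your part (1) is precisely the content of Lemma \ref{lm-cross-effects} (the additive functor $\Ext^i_{\A}(F,-)$, resp.\ $\Ext^i_{\A}(-,G)$, commutes with the direct-sum decompositions defining cross effects, so $\Cr_{d+1}$ of the parameterized functor is computed from $\Cr_{d+1}G$, resp.\ $\Cr_{d+1}F$), which the paper simply invokes with $\C$ the subcategory generated by the $G_M$ and their direct summands. Your part (2) explicitly reconstructs the lift of the parameterization $-_M$ (and $-^M$) to $R$-linear endofunctors of $\Gamma^s\V_R$, which is exactly the structure the paper already put in place in Section \ref{subsec-param} when declaring $-_M,-^M:\P_{s,R}\to\P_{s,R}$.
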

\begin{proof}
Assume for example that $G$ is polynomial of degree less or equal to $d$. Then the statement follows from lemma \ref{lm-cross-effects} with $\C$ the full subcategory of $\DD(\FF_R)$ containing the functors $G_M$, for $M\in\V_R$, and their direct summands as objects, and $\Psi:\C\to R\text{-Mod}$ given by $\Psi(G)=\Ext^i_\A(F,G)$. The proof of the other statements is similar.
\end{proof}

Lemma \ref{lm-applic1} shows that we can apply
theorem \ref{thm-princ} to obtain informations about the integral torsion of the
functor $\uE^i(F,G)$. If we evaluate this functor on $M=R$, we immediately
obtain the following statement. 

\begin{theorem}\label{thm-applic1}
Let $F\in\A$ be a polynomial functor and let $G\in\P_{s,R}$ whose underlying ordinary functor is in $\A$. Assume that $\deg(F)<s$.  
\begin{enumerate}
\item[(i)] For all $i$, $\Ext^i_{\A}(F,G)$ is a torsion abelian group.
\item[(ii)] For all prime $p$, the integral torsion of $p$-primary part of
$\Ext^i_{\A}(F,G)$ is bounded by $p^{r}$ with $r=\big\lceil
\frac{\deg(F)+1-\DIGp(s)}{p-1}\big\rceil$.
\item[(iii)] Assume furthermore  that $F\in\P_{t,R}$.  If $\INT(p,t)\cap \INT(p,s)=\emptyset$, then the $p$-primary part of $\Ext^i_{\A}(F,G)$ is zero.
\end{enumerate}
\end{theorem}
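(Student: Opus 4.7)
The plan is to apply Lemma \ref{lm-applic1} to pull $\Ext^i_{\A}(F,G)$ inside the framework of strict polynomial functors, and then to read off the three claims from Proposition \ref{prop-amusette} and Theorem \ref{thm-princ} evaluated at $M=R$ (which recovers $\Ext^i_{\A}(F,G)$ since $G_R\simeq G$).

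For (i), Lemma \ref{lm-applic1} makes $\uE^i(F,G)$ an object of $\P_{s,R}$, whose underlying ordinary functor is polynomial of degree at most $\deg(F)<s$. Proposition \ref{prop-amusette} gives that all its values are torsion, and evaluation at $M=R$ yields (i).

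For (ii), the $p$-primary summand ${_{(p)}\uE^i(F,G)}$ is again in $\P_{s,R}$ with degree at most $\deg(F)<s$. Theorem \ref{thm-princ}(2) bounds its integral torsion by $p^{r'}$ with $r'=\lceil(\deg{_{(p)}\uE^i(F,G)}+1-\DIGp(s))/(p-1)\rceil$; since $\deg{_{(p)}\uE^i(F,G)}\le \deg(F)$ and the ceiling expression is monotone in the degree, this bound is at most $p^r$. Evaluating at $M=R$ gives (ii).

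For (iii), the extra strict polynomial structure on $F$ produces, via Lemma \ref{lm-applic1}, a second strict polynomial structure of weight $t$ on $\uE^i(F,G)$, enriching the same underlying ordinary functor $M\mapsto \Ext^i_\A(F,G_M)\simeq \Ext^i_\A(F^M,G)$. Consequently, the $p$-primary summand ${_{(p)}\uE^i(F,G)}$ can be viewed simultaneously as an object of $\P_{s,R}$ and of $\P_{t,R}$; if it is nonzero, Theorem \ref{thm-princ}(1) applied in each category forces its degree to lie in $\INT(p,s)\cap \INT(p,t)$, contradicting the hypothesis. Hence ${_{(p)}\uE^i(F,G)}=0$, and so does its value at $M=R$.

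The main subtlety is the last step: one must justify that the $p$-primary decomposition of the abelian group of values of $\uE^i(F,G)$ is respected by both the weight-$s$ and the weight-$t$ Schur-category actions. This is automatic, since each Schur-category morphism acts by an abelian group homomorphism and therefore preserves the $p$-primary part, so ${_{(p)}\uE^i(F,G)}$ is indeed a subfunctor in both $\P_{s,R}$ and $\P_{t,R}$ with the same underlying ordinary functor. Once this compatibility is in place, no further work is needed.
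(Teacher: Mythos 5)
Your proposal is correct and follows exactly the route the paper intends: Lemma \ref{lm-applic1} places $\uE^i(F,G)$ (and its $p$-primary part) in $\P_{s,R}$ with degree at most $\deg(F)<s$, Proposition \ref{prop-amusette} and Theorem \ref{thm-princ} then give (i)--(iii), and evaluation at $M=R$ recovers $\Ext^i_{\A}(F,G)$; the paper states this is immediate and your write-up supplies the same argument, including the double strict polynomial structure (weights $s$ and $t$) needed for (iii). No gaps.
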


\begin{example}[Mac Lane cohomology with coefficients]
Let us denote by $I$ the inclusion of $\V_R$ in $R\text{-Mod}$. If $F\in\FF_R$,
the extension groups
$$\Ext^*_{\FF_R}(I,F)$$
are called the Mac Lane cohomology of $R$ with coefficients in $F$. Assume that
$F\in\P_{s,R}$ with $s\ge 2$. Theorem \ref{thm-applic1} shows that
$\Ext^*_{\FF_R}(I,F)$ is zero if $s$ is not a power of a prime, and if $s=p^r$
then $\Ext^*_{\FF_R}(I,F)$ is a $\Fp$-vector space.
This result was already obtained in the special cases $F=S^n$ or $F=\Lambda^n$
(for $R=\Z$) in \cite{FP}, where the corresponding extension groups are actually
explicitly computed.
\end{example}

In theorem \ref{thm-applic1}, we can take for $\A$ the category $\FF_R$ itself, or its full subcategory of polynomial functors of degree less or equal to $d$ for a given $d$. Hence theorem \ref{thm-applic1} shows similarities between $\Ext$-computations in these categories. Comparison results of $\Ext$ and $\Tor$ between these categories can be found in \cite{Djament2}.

\begin{remark}
As another common feature between these $\Ext$s, we mention a vanishing result which is well-known in the case $\A=\FF_R$ but which does not seem so well-known when $\A$ is the full subcategory of polynomial functors of degree less or equal to $d$. 
Namely, if $F\in\A$ is a polynomial functor of degree less than $d$, and if $G_1,\dots,G_d$ are reduced functors such that $G_1\otimes\dots\otimes G_d\in\A$, then
$$\Ext^*_\A(F,G_1\otimes\dots\otimes G_d)=0\;.$$
This follows from lemma \ref{lm-applic1} and proposition \ref{prop-cross-implies-vanish}, applied to the functor $\Psi:G\mapsto \Ext^i_\A(F,G)$.
\end{remark}

\subsection{Cohomology of algebraic groups}\label{subsec-cohom}

We refer the reader to \cite{Jantzen} for further details on algebraic groups and the related cohomology.

\begin{theorem}\label{thm-cohom}
Let $G$ be a split connected reductive algebraic group over a PID $R$. Let $M$ be a finitely generated free $R$-module on which $G$ acts rationally. 
Assume that $M$ has a good filtration. Then for all $F\in\P_{s,R}$ and all positive integer $i$,  $H^i(G,F(M))$ is a torsion abelian group whose $p$-primary part is bounded by $p^r$ with $r= \frac{s-\DIGp(s)}{p-1}$.
\end{theorem}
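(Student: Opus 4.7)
The plan is to interpret $H^i(G, F(M))$ as the value at $N = R$ of a weight-$s$ strict polynomial functor in a variable $N$, and then invoke the torsion-bound proposition at the end of Section \ref{sec-3}. Concretely, set $\Psi(F) := H^i(G, F(M))$, viewed as an additive functor $\Psi : \P_{s,R} \to R\text{-Mod}$; here $F(M)$ acquires a rational $G$-module structure from the action on $M$. Following the parameterization framework of Section \ref{subsec-param}, for $N \in \V_R$ endowed with trivial $G$-action I consider
$$\Psi_F(N) := \Psi(F_N) = H^i(G, F(M \otimes_R N)) \in \P_{s,R},$$
so that $\Psi_F(R) = H^i(G, F(M))$ is the object to be bounded. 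The theorem will follow once we establish that (a) $\Psi_F$ takes torsion values for $i \geq 1$, and (b) $\deg \Psi_F < s$.

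For (a), tensoring with $\mathbb{Q}$ is exact and commutes with $H^i(G,-)$: the base change $F_{\mathbb{Q}}(M_{\mathbb{Q}} \otimes N_{\mathbb{Q}})$ is a polynomial representation of the reductive group $G_{\mathbb{Q}}$ in characteristic zero, hence semisimple, so $H^i$ vanishes on it for $i \geq 1$, forcing $\Psi_F(N) \otimes \mathbb{Q} = 0$.

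For (b), Lemma \ref{lm-cross-effects} identifies $\Cr_s \Psi_F$ with the multifunctor $(N_1,\dots,N_s) \mapsto H^i(G, \Cr_s F(M \otimes N_1, \dots, M \otimes N_s))$. Since $\Cr_s F \in \P_{s,R}(s)$ is reduced in each variable, Lemma \ref{lm-decomp} concentrates it in the weight component $(1,\dots,1)$---the unique composition of $s$ into $s$ positive parts---so it is multilinear. Consequently
$$\Cr_s F(M \otimes N_1, \dots, M \otimes N_s) \cong U \otimes_R M^{\otimes s} \otimes_R N_1 \otimes \cdots \otimes_R N_s$$
as $G$-modules, where $U := \Cr_s F(R,\dots,R)$ and $G$ acts only on the $M^{\otimes s}$ factor. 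By Mathieu's theorem, $M^{\otimes s}$ has a good filtration, whence Kempf vanishing yields $H^i(G, M^{\otimes s}) = 0$ for $i \geq 1$. A dimension-shift argument using a two-step free resolution of $U \otimes N_1 \otimes \cdots \otimes N_s$ (which has trivial $G$-action) transfers this vanishing to the full tensor product, so $\Cr_s \Psi_F = 0$ and $\deg \Psi_F < s$.

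With (a) and (b) in hand, the proposition at the end of Section \ref{sec-3} bounds the integral torsion of $\Psi_F(R) = H^i(G, F(M))$ by $\M(s, \deg \Psi_F + 1)$, whose $p$-adic valuation is at most $N_p(\M(s,s)) = (s-\DIGp(s))/(p-1)$ by Lemma \ref{lm-bound}. The main obstacle is step (b), specifically the extension of Mathieu's theorem and Kempf vanishing from the field setting to split reductive group schemes over a PID, which should go through by reduction modulo residue fields combined with the freeness of $M$.
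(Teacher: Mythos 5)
Your proof follows essentially the same route as the paper's: identify the $s$-th cross effect of $N\mapsto H^i(G,F(M\otimes_R N))$ with $H^i(G,\,U\otimes M^{\otimes s}\otimes N_1\otimes\cdots\otimes N_s)$ where $G$ acts only on $M^{\otimes s}$, kill it using the good filtration on $M^{\otimes s}$ (the paper passes through the Hochschild complex and universal coefficients where you use a two-step free resolution, an equivalent device over a PID), and feed the resulting bound $\deg\Psi_F<s$ into the torsion proposition at the end of Section \ref{sec-3}. Your step (a) is redundant --- torsionness already follows from the degree bound via Proposition \ref{prop-amusette} --- which is just as well, since the characteristic-zero semisimplicity argument does not literally apply to PIDs of positive characteristic or with non-field generic fibre.
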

\begin{proof}
Let us fix a positive integer $i$, and let us denote by $\Psi:\P_{s,R}\to R\text{-Mod}$ the functor $\Psi(F)=H^i(G,F(M))$. Since $F$ is a functor of weight $s$, its $s$-th cross effect is an object of $\P_{1,\dots,1,R}(s)$. This category is equivalent to the category of $R$-modules: the equivalence is given by sending a $s$-functor $G$ to the $R$-module $G(R,\dots,R)$, and its inverse sends a $R$-module $K$ to the functor $(M_1,\dots,M_d)\mapsto M_1\otimes\dots\otimes M_d\otimes K$. In particular, for all $F\in\P_{s,R}$, lemma \ref{lm-cross-effects} yields the following formula for the $s$-th cross-effect of $\Psi_F$:
$$\Cr_s(\Psi_F)(M_1,\dots,M_s)\simeq H^i(G,M_1\otimes\dots\otimes M_s\otimes K\otimes M^{\otimes s})\;, \qquad(*)$$
where $K=\Cr_sF(R,\dots,R)$ and the action of $G$ on the coefficient module in the right-hand side is trivial on the factors $M_i$ and $K$.
We claim that the right-hand side of $(*)$ is zero. Hence $\Psi_F$ is a functor of degree less than $s$ and the bound in theorem \ref{thm-cohom} directly follows from theorem \ref{thm-princ}.

Thus, to finish the proof of theorem \ref{thm-cohom}, it remains to prove that our claim that the right-hand side of $(*)$ is zero. Since $M$ has a good filtration, $H^*(G,M^{\otimes s})$ is zero in positive degrees for all $s$. This vanishing implies a slightly more general one, namely that for all $s$ and all $R$-modules $N$ (considered as trivial $G$-modules), $H^*(G,M^{\otimes s}\otimes N)$ is zero in positive degrees. Indeed, the Hochschild complex \cite[I 4.14]{Jantzen} $C(G,M^{\otimes s}\otimes N)$ is isomorphic to $C(G,M^{\otimes s})\otimes N$. Since $C(G,M^{\otimes s})$ is a complex of free $R$-modules whose homology groups are free $R$-modules, we have $H^i(C(G,M^{\otimes s})\otimes N)=H^i(G,M^{\otimes s})\otimes N$ for all $i$. This proves our claim.
\end{proof}

\appendix

\section{Modules over the Schur algebra}

The Schur algebra $S_R(n,s)$ is the endomorphism algebra
$\End_{\Gamma^s(\V_R)}(R^n)$. We identify it with the full subcategory of the
Schur category with $R^n$ as only object. Then the functor $S_R(n,s)\to
\Gamma^s(\V_R)$ induces an evaluation functor 
$$\ev_n:\P_{s,R}\to S_R(n,s)\text{-Mod}\;.$$
It was proved by Friedlander and Suslin \cite[Cor 3.13]{FS} (see also
\cite{SFB,Krause}) that if $n\ge s$, the evaluation functor is an
equivalence of categories.
This section is written for the convenience of the reader. We explain 
how to convert theorem \ref{thm-princ} into an
equivalent statement in the realm of modules over Schur algebras. 

To this purpose, we recall the theory of weights for modules over Schur
algebras. The ground ring $R$ is an arbitrary commutative ring. 
We denote by $D_R(n,s)$ the endomorphism algebra (where $\Gamma^s
(\V_R^{\times n})$ is the category defined in section \ref{subsec-w-vs-d})
$$D_R(n,s)=\End_{\Gamma^s(\V_R^{\times
n})}(\underbrace{R,\dots,R}_{\text{$n$
times}})\;.$$
Thus $D_R(n,s)$ is a free $R$-module with basis $(e_{\lambda})$ indexed by the set 
$\Lambda(n,s)$ of $n$-tuples of nonnegative integers $\lambda=(\lambda_1,\dots,\lambda_n)$ 
with sum $\sum\lambda_i=s$. 
The product satisfies $e_\lambda e_\mu=0$ if $\lambda\ne \mu$ and $e_\lambda^2=e_\lambda$. 
In particular, if $M$ is a $D_R(n,s)$-module then $M_\lambda=e_\lambda M$ is a $D_R(n,s)$-submodule and there is a direct sum decomposition 
$$M=\bigoplus_{\lambda\in\Lambda(n,s)}M_\lambda\;.$$
The $M_\lambda$ are the \emph{weight subspaces of $M$}, and 
the \emph{weights} of $M$ are the tuples $\lambda\in\Lambda(n,s)$
such that $M_\lambda\ne 0$. The number of positive coefficients $\lambda_i$ in a weight $\lambda$ will be called the \emph{length} of the weight. By definition of $D_R(n,s)$, we get examples of
$D_R(n,s)$-modules by evaluating strict polynomial $n$-functors on
$(R,\dots,R)$. We record this fact in the following lemma.
\begin{lemma}\label{lm-record}
Let $F\in\P_{s,R}(n)$. Then $F(R,\dots,R)$ is a $D_R(n,s)$-module. Moreover, if
$F$ is an object of the subcategory
$\P_{(\lambda_1,\dots,\lambda_n),R}(n)\subset \P_{s,R}(n)$ and if
$F(R,\dots,R)\ne 0$, then $(\lambda_1,\dots,\lambda_n)$ is the unique weight of
$F(R,\dots,R)$.
\end{lemma}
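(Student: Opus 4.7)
The plan for this lemma is to unpack the definitions carefully; once the identification of the idempotents $e_\mu\in D_R(n,s)$ with the components of the weight decomposition is made explicit, both statements are essentially formal.

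For the first statement, observe that by construction $D_R(n,s)=\End_{\Gamma^s(\V_R^{\times n})}(R,\dots,R)$, and an object $F\in\P_{s,R}(n)$ is by definition an $R$-linear functor from $\Gamma^s(\V_R^{\times n})$ to $R$-modules. Therefore $F$ induces an $R$-algebra morphism $D_R(n,s)\to \End_R(F(R,\dots,R))$, which is exactly what it means for $F(R,\dots,R)$ to be a $D_R(n,s)$-module.

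For the second statement, I would use the decomposition \eqref{eq-decomposition} specialized to $M=N=(R,\dots,R)$, which yields the $R$-algebra decomposition
$$D_R(n,s)=\bigoplus_{\mu\in\Lambda(n,s)}\End_{\Gamma^{(\mu_1,\dots,\mu_n)}(\V_R^{\times n})}(R,\dots,R)\;.$$
The key identification to make is that the idempotent $e_\mu$ featured in the statement is none other than the identity element of the summand indexed by $\mu$ (extended by $0$ on the other summands); this is the content of lemma \ref{lm-decomp} specialized to endomorphism algebras at $(R,\dots,R)$, together with the uniqueness of central orthogonal idempotents cutting out each direct summand.

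Once this is in place, suppose $F\in\P_{(\lambda_1,\dots,\lambda_n),R}(n)\subset\P_{s,R}(n)$. The inclusion of subcategories corresponds, on morphism spaces, to the projection onto the $(\lambda_1,\dots,\lambda_n)$-component of \eqref{eq-decomposition} followed by the identity on that component. Thus $F$ sends $e_\lambda$ to the identity of $F(R,\dots,R)$ and $e_\mu$ to $0$ for $\mu\ne\lambda$. Hence $F(R,\dots,R)_\mu=e_\mu\cdot F(R,\dots,R)=0$ whenever $\mu\ne\lambda$, and $F(R,\dots,R)_\lambda=F(R,\dots,R)$. If $F(R,\dots,R)\ne 0$, it follows that $\lambda$ is its unique weight. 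No real obstacle is anticipated; the only subtle point is the bookkeeping that identifies the abstractly defined $e_\mu$ with the unit of the $\mu$-component, which is immediate from lemma \ref{lm-decomp}.
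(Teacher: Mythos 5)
Your proof is correct and fills in exactly the bookkeeping the paper leaves implicit: the paper offers no proof of this lemma, merely asserting it follows "by definition of $D_R(n,s)$", and your argument — the induced algebra map $D_R(n,s)\to\End_R(F(R,\dots,R))$ for the module structure, plus the identification of $e_\mu$ with the unit of the $\mu$-summand of the decomposition \eqref{eq-decomposition} via lemma \ref{lm-decomp} — is precisely the intended justification.
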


The functor $\Gamma^s(\V_R^{\times n})\to \Gamma^s(\V_R)$, $(M_i)\mapsto
\bigoplus M_i$ induces a morphism of algebras:
$$D_R(n,s)\to S_R(n,s)\;.$$
Hence every $S_R(n,s)$-module restricts to a $D_R(n,s)$-module. 
By definition, the weights of a $S_R(n,s)$-module $M$ are the weights of the
corresponding $D_R(n,s)$-module. 

\begin{remark}
The $S_R(n,s)$-modules provide representations of the group scheme $GL_{n,R}$
see e.g. \cite[Section 3]{FS}. From that point of view, the $D_R(n,s)$-module
structure obtained by restriction corresponds to the action of the $n$-torus of
diagonal matrices of $GL_{n,R}$. This explains the notation $D_R(n,s)$. 
\end{remark}

We have the following correspondence between
the degree of a strict polynomial functor $F$ and the weights of the
$S_R(n,s)$-module $F(R^n)$.

\begin{lemma}\label{lm-translate}
Let $F\in\P_{s,R}$ with $s>0$ and let $n\ge s$. The degree of $F$ is equal to
the maximum of the lengths of the weights of 
the $S_R(n,s)$-module $F(R^n)$.
\end{lemma}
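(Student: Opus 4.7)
The plan is to identify the weight spaces of the $S_R(n,s)$-module $F(R^n)$ with components of cross-effects of $F$ evaluated at $(R,\dots,R)$, and then to compare the maximum length of a weight with the defining property of $\deg F$. Since $s>0$, $F$ is reduced, so the decomposition \eqref{eqn-decomp} applied to $R^n=R\oplus\dots\oplus R$ yields
\[ F(R^n)=\bigoplus_{\emptyset\ne J\subset\{1,\dots,n\}}\Cr_{|J|}F(R,\dots,R). \]
By lemma \ref{lm-decomp}, each $\Cr_kF$ splits further into weight components $(\Cr_kF)_{(s_1,\dots,s_k)}$ indexed by compositions of $s$ into $k$ positive parts (the parts are positive because the cross effects are reduced). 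Lemma \ref{lm-record}, read through the inclusion $J=\{j_1<\dots<j_k\}\hookrightarrow\{1,\dots,n\}$, identifies $(\Cr_kF)_{(s_1,\dots,s_k)}(R,\dots,R)$ as the $\lambda$-weight space of $F(R^n)$ for the unique $\lambda\in\Lambda(n,s)$ with $\lambda_{j_i}=s_i$ and $\lambda_j=0$ otherwise. Since these summands exhaust $F(R^n)$, a weight of length exactly $k$ corresponds to a choice of $J$ of size $k$ together with a composition $(s_1,\dots,s_k)$ for which $(\Cr_kF)_{(s_1,\dots,s_k)}(R,\dots,R)\ne 0$.

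With this identification one direction is immediate: the existence of a weight of length $k$ in $F(R^n)$ forces $\Cr_kF\ne 0$ and hence $\deg F\ge k$. For the reverse inequality set $d=\deg F$; since $d\le s\le n$ by lemma \ref{lm-deg-poids}, length-$d$ weights are admissible in $\Lambda(n,s)$, and it suffices to exhibit one that actually occurs, equivalently to show $(\Cr_dF)_{(s_1,\dots,s_d)}(R,\dots,R)\ne 0$ for some composition of $s$ into $d$ positive parts.

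Here is the key input. Because $\Cr_{d+1}F=0$, the functor $\Cr_dF$ is additive in each of its variables separately, and so is every direct summand. Pick a composition with $G:=(\Cr_dF)_{(s_1,\dots,s_d)}\ne 0$. Applying lemma \ref{lm-univ} in the first variable with $M_2,\dots,M_d$ frozen as parameters yields a natural isomorphism $G(M_1,\dots,M_d)\simeq Q^{s_1}_R(M_1)\otimes G(R,M_2,\dots,M_d)$; crucially, naturality in the frozen variables holds because this isomorphism is built from the Yoneda map $\Gamma^{s_1}\otimes G(R,M_2,\dots,M_d)\to G(-,M_2,\dots,M_d)$ followed by passage to the cokernel, both steps being functorial in $(M_2,\dots,M_d)$. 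Iterating in each remaining variable produces
\[ G(M_1,\dots,M_d)\simeq Q^{s_1}_R(M_1)\otimes\dots\otimes Q^{s_d}_R(M_d)\otimes L,\qquad L:=G(R,\dots,R). \]
The hypothesis $G\ne 0$ forces $L\ne 0$, so $G(R,\dots,R)\ne 0$; the corresponding $\lambda\in\Lambda(n,s)$ is then a weight of $F(R^n)$ of length $d$.

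The main obstacle is the naturality justifying the iteration at the last step; once that is recorded, everything else is essentially bookkeeping of the cross-effect/weight decomposition.
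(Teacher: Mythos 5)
Your proposal is correct and follows essentially the same route as the paper: the same decomposition of $F(R^n)$ into homogeneous summands of cross-effects via \eqref{eqn-decomp}, lemma \ref{lm-decomp} and lemma \ref{lm-record}, with the lower bound on $\deg F$ from the existence of a long weight and the upper bound from the nonvanishing of $\Cr_dF(R,\dots,R)$. The only difference is that you spell out, via an iterated application of lemma \ref{lm-univ}, why a nonzero multifunctor additive in each variable is nonzero at $(R,\dots,R)$ — a point the paper leaves as a one-line remark — and your justification (which in fact only needs lemma \ref{lm-univ} applied pointwise in each variable, so the naturality worry is not essential) is sound.
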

\begin{proof}
Let $F\in\P_{s,R}$ of degree less or equal to $d$. There is an isomorphism of
strict polynomial functors with $n$ variables $(M_1,\dots,M_n)$:
$$F(M_1\oplus\dots\oplus M_n)\simeq \bigoplus_{k=1}^d\left(\bigoplus_{j_1<\dots<j_k} \Cr_{k}F(M_{j_1},\dots,M_{j_k})\,\right)\;.$$
Furthermore, by lemma \ref{lm-decomp}, we have a direct sum decomposition $\Cr_k F\simeq \bigoplus (\Cr_k F)_\mu$, with $\mu\in \Lambda(k,s)$ and $(\Cr_k F)_\mu\in \P_{\mu,R}(k)$. So the isomorphism above refines to an isomorphism
$$F(M_1\oplus\dots\oplus M_n)\simeq \bigoplus_{k=1}^d\left(\bigoplus_{\mu\in \Lambda(k,s)}\left(\bigoplus_{j_1<\dots<j_k} (\Cr_{k}F)_{\mu}(M_{j_1},\dots,M_{j_k})\,\right)\right)\;.$$
By evaluation on $(R,\dots,R)$,
we get an isomorphism of $D_R(n,s)$-modules between $F(R^n)$ and a direct sum
of modules of the form $(\Cr_kF)_\mu(R,\dots,R)$. By lemma \ref{lm-record}, the latter is a  $D_R(n,s)$-module of weight $\widetilde{\mu}$, where
$\widetilde{\mu}=(\widetilde{\mu}_1,\dots,\widetilde{\mu}_n)$ with
$\widetilde{\mu}_i=0$ if $i\not\in\{j_1,\dots,j_k\}$, and
$\widetilde{\mu}_{j_\ell}=\mu_\ell$. Thus,
\begin{enumerate}
\item[(i)] if $\Cr_k F(R,\dots,R)$ is nonzero, the lengths of the weights of the $D_R(n,s)$-module $\Cr_k F(R,\dots,R)$ are less or equal to $k$.
\end{enumerate}
Moreover,
\begin{enumerate}
\item[(ii)] if the degree of $F$ is exactly $d$ then 
$\Cr_d F(R,\dots,R)$ is nonzero, and the lengths of the weights of this $D_R(n,s)$-module are all equal to $d$.
\end{enumerate}
Indeed, $\Cr_d F$ is nonzero and additive with respect to each variable. Now lemma \ref{lm-translate} follows from (i) and (ii).
\end{proof}

Now we can use lemma \ref{lm-translate} to translate proposition
\ref{prop-amusette} and theorem \ref{thm-princ} into statements for modules
over the Schur algebras.

\begin{theorem}\label{thm-schur}
Let $R$ be a commutative ring, let $n,s$ be positive integers with $n\ge s$, and
let $M$ be a nonzero $S_R(n,s)$-module.
\begin{enumerate}
\item[(i)] If all the weights of $M$ have length less than $s$, then the
underlying abelian group of $M$ is a torsion group.
\item[(ii)] Assume that the underlying abelian group of $M$ is a $p$-primary
abelian group. Let $d$ be the largest integer such that there exists a weight of $M$ of length $d$.
Then $d\in \INT(p,s)$.
Moreover, if $d<s$ then the integral torsion of the 
underlying abelian group of $M$ is bounded by $p^r$, with $r=\big\lceil
\frac{d+1-\DIGp(s)}{p-1}\big\rceil$.
\end{enumerate}
\end{theorem}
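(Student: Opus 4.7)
The plan is to transport Proposition \ref{prop-amusette} and Theorem \ref{thm-princ} through the Friedlander--Suslin equivalence $\ev_n: \P_{s,R} \to S_R(n,s)\text{-Mod}$, which applies because we are assuming $n \ge s$. Writing $M = F(R^n)$ for a uniquely determined $F \in \P_{s,R}$, Lemma \ref{lm-translate} identifies $\deg(F)$ with the maximum of the lengths of the weights of $M$. Part (i) is then immediate: the hypothesis that every weight has length less than $s$ gives $\deg(F) < s$, so Proposition \ref{prop-amusette} forces $F$ to take torsion values, whence $M = F(R^n)$ is torsion.

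For part (ii), I first want to verify that $\ev_n$ preserves being $p$-primary, so that the hypothesis on $M$ translates to the statement that $F$ is a $p$-primary strict polynomial functor in the sense required by Theorem \ref{thm-princ}. Since $\ev_n$ is an exact $R$-linear equivalence, for every $k \ge 1$ the subfunctor of $F$ annihilated by $p^k$ evaluates on $R^n$ to $\{x \in M : p^k x = 0\}$, and passing to the directed union over $k$ identifies ${_{(p)}F}$ with ${_{(p)}M}$ under $\ev_n$. In particular $M$ is $p$-primary if and only if $F = {_{(p)}F}$. Granted this, Lemma \ref{lm-translate} reads off $\deg(F) = d$ from the weights of $M$; Theorem \ref{thm-princ}(1) yields $d \in \INT(p,s)$; and, when $d < s$, Theorem \ref{thm-princ}(2) bounds the torsion of $F$, hence of $M = F(R^n)$, by $p^r$ with $r = \bigl\lceil \frac{d+1-\DIGp(s)}{p-1}\bigr\rceil$, as required.

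There is no substantive obstacle: the whole argument is a formal translation along the equivalence $\ev_n$, and the only real bookkeeping is the compatibility of the $p$-primary part construction with this equivalence, which is automatic from $\ev_n$ being an equivalence of abelian categories (hence commuting with the filtered colimits defining ${_{(p)}(-)}$). Lemma \ref{lm-translate} does all of the nontrivial work by converting the weight data on the Schur-algebra side into the functorial degree on the $\P_{s,R}$ side, after which Theorem \ref{thm-princ} can be invoked verbatim.
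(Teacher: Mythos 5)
Your proof is correct and is exactly the route the paper intends: the paper states the theorem as an immediate translation of Proposition \ref{prop-amusette} and Theorem \ref{thm-princ} through the Friedlander--Suslin equivalence $\ev_n$ (valid since $n\ge s$) combined with Lemma \ref{lm-translate}. Your extra verification that the $p$-primary part is preserved under the equivalence is a reasonable piece of bookkeeping that the paper leaves implicit.
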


\end{document}